\newtheorem{theorem}{Theorem}
\newtheorem{proposition}[theorem]{Proposition}
\newtheorem{corollary}[theorem]{Corollary}
\newtheorem{lemma}[theorem]{Lemma}
\theoremstyle{definition}
\newtheorem{definition}[theorem]{Definition}
\newtheorem{remark}[theorem]{Remark}
\newtheorem{example}[theorem]{Example}
\newtheorem{question}[theorem]{Question}
\numberwithin{equation}{section}
\numberwithin{theorem}{section}
\newcommand{\G}{\Gamma}
\renewcommand{\k}{\kappa}
\renewcommand{\l}{\lambda}
\renewcommand{\r}{\rho}
\newcommand{\Dc}{{\mathcal D}}
\newcommand{\R}{{\mathbb R}}
\newcommand{\Z}{{\mathbb Z}}
\newcommand{\calA}{\mathcal{A}}
\newcommand{\calC}{\mathcal{C}}
\newcommand{\calD}{\mathcal{D}}
\newcommand{\calG}{\mathcal{G}}
\newcommand{\calH}{\mathcal{H}}
\newcommand{\calX}{\mathcal{X}}
\newcommand{\scrC}{\mathscr{C}}
\newcommand{\scrD}{\mathscr{D}}
\newcommand{\scrT}{\mathscr{T}}
\newcommand{\scrV}{\mathscr{V}}
\newcommand{\Hy}{\mathbb{H}}
\newcommand{\bbK}{\mathbb{K}}
\newcommand{\bbP}{\mathbb{P}}
\newcommand{\al}{\alpha}
\newcommand{\gam}{\gamma}
\newcommand{\del}{\delta}
\newcommand{\Del}{\Delta}
\newcommand{\ep}{\epsilon}
\newcommand{\thet}{\theta}
\newcommand{\lam}{\lambda}
\newcommand\SL{\operatorname{SL}}
\newcommand\Aut{\textnormal{Aut}}
\newcommand\Out{\textnormal{Out}}
\newcommand{\Curr}{\calC urr}
\newcommand{\bs}{\backslash}
\newcommand{\ra}{\rightarrow}
\newcommand{\cor}[1]{\left\{{#1}\right\}}
\def\({\left(}
\def\){\right)}
\def\l\{{\left\{}
\def\r\}{\right\}}
\def\wt{\widetilde}
\def\wh{\widehat}
\def\wbar{\overline}
\def\ov{\overline}
\def\id{{\rm id}}
\def\Cay{{\rm Cay}}
\def\Leb{{\rm Leb}}
\def\ev{{\rm ev\,}}
\def\1{{\bf 1}}
\def\gr{{\rm gr}}
\def\conj{{\bf conj}}
\def\ac{{\rm ac}}
\def\sing{{\rm sing}}
\def\H{{\mathbb H}}
\def\bS{{\bf S}}
\def\CAT{{\rm CAT}}
\def\Dil{{\rm Dil}}
\DeclareMathOperator{\diam}{diam}
\newcommand\numberthis{\addtocounter{equation}{1}\tag{\theequation}}
\date{\today}
\title[Manhattan geodesics and the boundary of metric structures]{Manhattan geodesics and the boundary of the space of metric structures on hyperbolic groups}
\author{\small{Stephen Cantrell and Eduardo Reyes}}
\begin{document}
\maketitle
\begin{abstract}
For any non-elementary hyperbolic group $\G$, we find an outer automorphism invariant geodesic bicombing for the space of pseudo metric structures on $\G$ equipped with a symmetrized version of the Thurston metric on Teichm\"uller space. We construct and study a boundary for this space and show that it contains many well-known pseudo metrics.
As corollaries we obtain 
results regarding continuous extensions of translation length functions to the space of geodesic currents and settle a conjecture of Bonahon in the negative.
\end{abstract}
\maketitle

\section{Introduction}
Let $\G$ be a non-elementary hyperbolic group, and let $\calD_\G$ denote the set of all left-invariant, hyperbolic pseudo metrics on $\G$ that are quasi-isometric to a word metric. For $d\in \calD_\G$ the \emph{stable translation length} function is defined as
\begin{equation*}
    \ell_d[x]:=\lim_{n\to \infty}{\frac{1}{n}d(o,x^n)} \hspace{2mm} \text{for } x\in \G,
\end{equation*}
where $o$ denotes the identity element and $[x]$ is the conjugacy class containing $x$.

Given a pair of pseudo metrics $d,d_*\in \calD_\G$, their \emph{dilations} are given by
\begin{equation*}
    \Dil(d,d_*):=\sup_{[x] \in  \conj'}\frac{\ell_d[x]}{\ell_{d_*}[x]} \ \ \text{ and } \ \ \Dil(d_*,d):=\sup_{[x] \in  \conj'}\frac{\ell_{d_*}[x]}{\ell_{d}[x]},
\end{equation*}
where $\conj'$ is the set of conjugacy classes of non-torsion elements in $\G$.

The pseudo metrics $d,d_\ast$ are quasi-isometric to each other via the identity map on $\G$. Our first result states that the optimal quasi-isometry constants for this map are given by the dilations, which refines \cite[Lem.~3.4]{oregon-reyes.ms}. The following is a special case of a more general result which appears as Theorem \ref{thm.Dildistancelike} in Section \ref{sec.optqiconstant}.
\begin{theorem}\label{thm.dilatta}
    For any $d,d_*\in \calD_\G$ there exists some $C\geq 0$ such that 
    \begin{equation}\label{eq.dilatta}
        \Dil(d,d_*)^{-1}(x|y)_{o,d}-C\leq (x|y)_{o,d_*}\leq \Dil(d_*,d)(x|y)_{o,d}+C
    \end{equation}
    for all $x,y\in \G$.
\end{theorem}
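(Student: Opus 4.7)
The plan is to prove the right-hand inequality with $K:=\Dil(d_*,d)$; the left one then follows by exchanging the roles of $d$ and $d_*$, which replaces $K$ by $\Dil(d,d_*)$ and swaps the two sides. I would split the argument into two stages.

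\textbf{Stage 1: Reduction to a pointwise distance inequality.} Fix hyperbolicity constants for $d$ and $d_*$ and invoke the standard identity
$$(x|y)_{o,d}=d\bigl(o,[x,y]_d\bigr)+O(1),$$
where $[x,y]_d$ denotes any $d$-geodesic from $x$ to $y$, with the analogous identity for $d_*$. Since $d$ and $d_*$ are quasi-isometric (by \cite[Lem.~3.4]{oregon-reyes.ms}), the Morse Lemma in hyperbolic spaces guarantees that $[x,y]_d$ and $[x,y]_{d_*}$ lie within uniformly bounded $d_*$-Hausdorff distance. If $p\in[x,y]_d$ realizes the minimum of $d(o,\cdot)$ along $[x,y]_d$, then $d(o,p)=(x|y)_{o,d}+O(1)$ and
$$(x|y)_{o,d_*}\le d_*\bigl(o,[x,y]_d\bigr)+O(1)\le d_*(o,p)+O(1).$$
Therefore it suffices to establish the pointwise distance bound
\begin{equation}\label{eq:distineq-plan}
d_*(o,x)\le K\,d(o,x)+C\qquad\text{for every }x\in\G.
\end{equation}

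\textbf{Stage 2: Distance inequality via periodic approximation.} The strategy is to approximate an arbitrary $x\in\G$ by an orbit point of a loxodromic element whose $d$-axis passes near $o$. The required geometric input is an axis-shadowing statement: there exists $C_0$ depending only on $d$ and $d_*$ such that for every $x\in\G$ with $d(o,x)\ge C_0$, there exist a loxodromic $g\in\G$ and $n\ge 1$ satisfying
$$d(x,g^n\!\cdot o)\le C_0,\qquad d(o,g^n\!\cdot o)=n\,\ell_d[g]+O(1).$$
Such a $g$ is produced by a ping-pong construction using two loxodromic elements of $\G$ with sufficiently independent fixed-point pairs in $\partial\G$ together with quantitative north-south dynamics, so that the $d$-axis of $g$ shadows $[o,x]_d$ for essentially its full length. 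Because the $d$- and $d_*$-axes of a loxodromic element share the same endpoints in $\partial\G$, they are $d_*$-close as quasi-geodesics; hence $o$ is also $d_*$-close to the $d_*$-axis of $g$, giving $d_*(o,g^n\!\cdot o)=n\,\ell_{d_*}[g]+O(1)$. Combining with the defining inequality $\ell_{d_*}[g]\le K\,\ell_d[g]$,
$$d_*(o,x)\le d_*(o,g^n\!\cdot o)+O(1)\le n\,\ell_{d_*}[g]+O(1)\le K\,n\,\ell_d[g]+O(1)\le K\,d(o,x)+O(1),$$
which is \eqref{eq:distineq-plan}. Feeding \eqref{eq:distineq-plan} back through Stage 1 yields the Gromov-product inequality of the theorem.

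\textbf{Main obstacle.} The technical heart is producing the axis-shadowing with uniform additive error, so that the defect $|d(o,g^n\!\cdot o)-n\,\ell_d[g]|$ is bounded independently of $x$. A naive ping-pong construction typically loses a multiplicative factor of the form $(1-\epsilon)$ in the length accounted for by $n\,\ell_d[g]$, which would replace $K$ by $K/(1-\epsilon)$ and spoil optimality of the dilation constant; arranging the construction so that this loss is absorbed into the additive $O(1)$---either by sharpening ping-pong or by a limiting Fekete-type argument on iterates---is the delicate part. Granted this lemma, the rest of the proof is routine $\delta$-hyperbolic geometry and Morse stability.
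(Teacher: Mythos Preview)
Your high-level strategy matches the paper's: reduce the Gromov-product inequality to the pointwise distance inequality $d_*(o,x)\le \Dil(d_*,d)\,d(o,x)+C$ (the paper does this via Proposition~\ref{prop.qiGromovprod} rather than your Morse-lemma argument, but these are equivalent), and then establish that inequality by replacing $x$ with a nearby element whose displacement equals its translation length up to a uniform additive error. You have also put your finger precisely on the crux: getting the axis-shadowing with only an \emph{additive} defect, rather than the $(1-\epsilon)$ multiplicative loss that a naive construction incurs.

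The gap is that your proposed tool for this step---ping-pong with two independent loxodromics---is not known to achieve this. Indeed, the paper explicitly remarks (after Corollary~\ref{coro.ssdist<mls}) that the earlier geometric argument in \cite[Prop.~3.1]{oregon-reyes.ms} only yields $\psi(x,y)\le (1-\epsilon)^{-1}\max_{u\in B}\ell_\psi[x^{-1}yu]+C_\epsilon$ with $C_\epsilon\to\infty$, which is exactly the defect you anticipate. Neither a ``sharpened ping-pong'' nor a Fekete-type limiting argument is offered, and it is unclear how either would yield a uniform additive bound: Fekete subadditivity on iterates of a \emph{fixed} $g$ gives nothing new, and letting the ping-pong parameters depend on $x$ destroys uniformity.

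The paper's resolution is combinatorial rather than dynamical. Using Cannon's strongly Markov structure and growth quasi-tightness (Proposition~\ref{prop.maxreccomp}), one fixes a maximal recurrent component $\calC$ of the Cannon graph and a finite set $B$ with $B\G_\calC B=\G$. For any $x$, write $x=s_1 r s_2$ with $r\in\G_\calC$; since $r$ is a path in $\calC$, one closes it to a loop by appending a bounded path, producing $\gamma_x$ with $d_S(x,\gamma_x)$ uniformly bounded and, crucially, $(\gamma_x^{-m}|\gamma_x^n)_S\le C'$ for all $m,n\ge 0$ (Lemma~\ref{lem.closetogeodword}). Because the bound on the Gromov product is in the \emph{word metric}, the hyperbolic-distance-like-function axiom transfers it to \emph{both} $d$ and $d_*$ simultaneously (Lemma~\ref{lem.gam_x}), giving $\psi(o,\gamma_x)\le \ell_\psi[\gamma_x]+2C_0$ for each $\psi\in\{d,d_*\}$ with a single element $\gamma_x$. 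This is what replaces your axis-shadowing lemma and eliminates the $(1-\epsilon)$ loss.
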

In the inequality above, $(\cdot|\cdot)_{o,d}$ and $(\cdot|\cdot)_{o,d_*}$ denote the corresponding Gromov products for $d$ and $d'$, see Section \ref{sec.hypspacesgroups}. Hyperbolicity of $d$ and $d_\ast$ is crucial to obtain the inequality \eqref{eq.dilatta}, and in general this is strictly stronger than the analogous inequality with the distance between two points.
Indeed, if $d\in \calD_\G$ and $d_\ast$ is an arbitrary pseudometric satisfying \eqref{eq.dilatta} then $d$ must be hyperbolic, see Remark \ref{rmk.hdlfhyp}. On the other hand, there are non-hyperbolic left-invariant metrics on $\G$ that are quasi-isometric to word metrics, see e.g.~\cite[Prop.~A.11]{bhm}.  

As an immediate consequence, we get another proof of (weak) marked length spectrum rigidity for pseudo metrics in $\calD_\G$ \cite[Thm.~4.1]{furman}: if $d,d_*\in \calD_\G$ satisfy $\ell_d[x]=\ell_{d_*}[x]$ for all $x\in \G$, then $|d-d_*|\leq C$ for some constant $C\geq 0$. We note that our more general theorem, Theorem \ref{thm.Dildistancelike}, can be applied to interesting examples of distance-like functions that are not necessarily metrics in $\Dc_\G$. In Section \ref{sec.Anosov} we apply our Theorem \ref{thm.Dildistancelike} to study growth rate constants associated to Anosov representations.

The inequality \eqref{eq.dilatta} might seem innocent at first, but it has interesting consequences when it comes to the understanding of the space $\calD_\G$. Below we present applications of Theorem \ref{thm.dilatta}.

\subsection{Geodesics in the space of metric structures}
 In \cite{oregon-reyes.ms}, the second author studied the topological and metric properties of the space of \emph{metric structures} on $\G$, denoted by $\scrD_\G$. This space is the quotient of $\calD_\G$ under the equivalence relation of rough similarity (see Section \ref{sec.hypspacesgroups}), and is endowed with the distance
\begin{equation}\label{eq.defDel}
\Del([d],[d_*]):=\log  \left(\Dil(d,d_*) \Dil(d_*,d) \right).
\end{equation}

Among other properties, it was proven that $(\scrD_\G,\Del)$ is unbounded and contractible \cite[Thm.~1.3]{oregon-reyes.ms}. Also, $\scrD_\G$ contains Teichm\"uller space $\scrT_\G$ when $\G$ is a surface group, and the Culler-Vogtmann Outer space $\scrC\scrV(\G)$ in case $\G$ is a free group.

As the first application of Theorem \ref{thm.dilatta}, we prove that the metric space $(\scrD_\G,\Del)$ is geodesic. Indeed, every pair of distinct metric structures lie in a \emph{bi-infinite} geodesic.

\begin{theorem}\label{thm.Mangeo} For any  pair $d, d_* \in \calD_\G$ such that $[d]\neq [d_*]\in \scrD_\G$, there exists a continuous, injective map $\rho_{\bullet}=\rho_\bullet^{d_*/d}: \mathbb{R} \rightarrow \scrD_{\Gamma}$ satisfying:
\begin{itemize}
\item[$i)$] $\rho_0=[d]$ and $\rho_{h(d_*)}=[d_*]$ where $h(d_\ast)$ is the exponential growth rate of $d_\ast$;
\item[$ii)$] $\Delta\left(\rho_{r}, \rho_{t}\right)=\Delta\left(\rho_{r}, \rho_{s}\right)+\Delta\left(\rho_{s}, \rho_{t}\right)$ for all $r<s<t$; and
\item[$iii)$] $\lim _{t \rightarrow \infty} \Delta\left(\rho_{t}, \rho_{h(d_*)}\right)=\lim _{t \rightarrow-\infty} \Delta\left(\rho_{t}, \rho_{0}\right)=\infty$.
\end{itemize}
\end{theorem}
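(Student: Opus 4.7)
The plan is to construct the geodesic between $[d]$ and $[d_*]$ by straight-line interpolation between the two pseudo-metrics, and extend the resulting segment to both ends in a separate step. For $s \in [0,1]$, set
\[
d_s := (1-s)\,d + s\,d_*.
\]
This is a left-invariant pseudo-metric quasi-isometric to a word metric (since both $d$ and $d_*$ are), and hyperbolicity is the only nontrivial point in checking $d_s \in \calD_\G$. Theorem~\ref{thm.dilatta} gives the quantitative comparison of Gromov products that I would use here: combined with the exact linearity $(x|y)_{o,d_s} = (1-s)(x|y)_{o,d} + s(x|y)_{o,d_*}$, it yields a four-point condition for $d_s$ with constants depending only on those of $d$, $d_*$ and on the two dilations.

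The key computation then uses the additivity of stable translation length, $\ell_{d_s}[x] = (1-s)\ell_d[x] + s\ell_{d_*}[x]$. Writing $r(x) := \ell_{d_*}[x]/\ell_d[x]$, the extremal values of $r$ over non-torsion conjugacy classes are $\Dil(d,d_*)^{-1}$ and $\Dil(d_*,d)$ by definition, and a monotonicity argument for the Möbius-type function $r \mapsto ((1-s)+sr)/((1-t)+tr)$ gives, for $s<t$,
\[
\Dil(d_s,d_t) = \frac{(1-s)\Dil(d,d_*)+s}{(1-t)\Dil(d,d_*)+t}, \qquad \Dil(d_t,d_s) = \frac{(1-t)+t\Dil(d_*,d)}{(1-s)+s\Dil(d_*,d)}.
\]
Plugging these into the definition of $\Del$ and telescoping, the factors at the intermediate parameter cancel and yield $\Del([d_r],[d_t]) = \Del([d_r],[d_s]) + \Del([d_s],[d_t])$ for all $r<s<t$ in $[0,1]$. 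A monotone reparameterization $s \mapsto h(d_*)\,s$ then gives a parameterization of a geodesic segment with $\rho_0=[d]$ and $\rho_{h(d_*)}=[d_*]$, realizing (i) and (ii) on $[0,h(d_*)]$.

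The remaining task, and the step I expect to be the main obstacle, is extending this finite segment to a bi-infinite geodesic satisfying (iii). Convex combinations of $d$ and $d_*$ alone only parameterize the segment between $[d]$ and $[d_*]$, so further input is needed. One natural route is to produce, for each prescribed length $L>0$, auxiliary pseudo-metrics $d_{-,L},d_{+,L}\in\calD_\G$ and corresponding interpolation segments whose concatenation with the original segment yields a geodesic of length at least $L$ through $[d]$ and $[d_*]$, then pass to a limit using the unboundedness of $\scrD_\G$ from \cite[Thm.~1.3]{oregon-reyes.ms}. The delicate point is that the auxiliary extensions must be chosen compatibly so that the concatenated path remains a \emph{global} geodesic and the parameterization is injective; this is the step where the geodesic \emph{bicombing} referred to in the abstract likely enters, and where a careful, rather than purely abstract, construction seems necessary.
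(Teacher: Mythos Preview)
Your segment construction on $[0,1]$ is fine and agrees (up to reparametrization) with what the paper does on $[0,h(d_*)]$: the paper writes the same family as $t\,d_*+\theta(t)\,d$ with $\theta$ the Manhattan curve, and for $t\in[0,h(d_*)]$ both coefficients are nonnegative, so this is just a rescaled convex combination. Your dilation computation is correct and matches Lemma~\ref{lem.compDa1}.

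The genuine gap is in the extension step. Your proposed limiting argument with auxiliary endpoints $d_{\pm,L}$ is vague and, more importantly, unnecessary: the key idea you are missing is that the \emph{same} linear family $d_s=(1-s)d+sd_*$ already gives the whole bi-infinite geodesic once you allow $s$ outside $[0,1]$. The very inequality from Theorem~\ref{thm.dilatta} that you invoke for hyperbolicity is exactly what shows that $(1-s)d+sd_*$, after adding a bounded constant, is still a pseudo metric in $\calD_\G$ when one coefficient is negative. Concretely, for $s>1$ one has
\[
(1-s)(x|y)_{o,d}+s(x|y)_{o,d_*}\ \geq\ \bigl(s-(s-1)\Dil(d,d_*)\bigr)(x|y)_{o,d_*}-(s-1)C,
\]
and the coefficient in front is positive precisely on the interval $s\in\bigl(\,-1/(\Dil(d_*,d)-1),\ \Dil(d,d_*)/(\Dil(d,d_*)-1)\,\bigr)$. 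Your own dilation formulas extend verbatim to this range, and letting $s$ approach either endpoint sends $\Del([d_s],[d])$ or $\Del([d_s],[d_*])$ to infinity, giving (iii) directly. The paper carries this out in the equivalent Manhattan-curve parametrization $t\mapsto t\,d_*+\theta(t)\,d$ with $t\in\R$ (Proposition~\ref{prop.da} and Lemma~\ref{lemma.D0apos}); the advantage there is that $h(d_t)=1$ for all $t$ and the parameter is already unbounded, but the content is the same. So rather than concatenating segments and passing to limits, you should push the linear combination past $[0,1]$ using Theorem~\ref{thm.dilatta}; that is the missing idea.
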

In particular, we get another proof that $\scrD_\G$ is unbounded.
The result above seems surprising when we contrast it with the case of Outer space, which is not geodesic for the symmetrized Thurston metric \cite[Sec.~6]{franc.mart}.

The map $\rho_\bullet^{d_*/d}$ in Theorem \ref{thm.Mangeo} is constructed in such a way that for any $t\in \R$, any pseudo metric representing $\rho_t^{d_*/d}$ is roughly similar to  $td_*+\thet(t)d$, where $\thet=\thet_{d_*/d}$ is the parametrization of the \emph{Manhattan curve} for $d,d_*$ (see Section \ref{sec.manhattan} and Proposition \ref{prop.da}). This happens even when $t$ or $\thet(t)$ are negative. Intuitively the Manhattan curve $\theta$ can be thought of as a scaling factor that fixes exponential growth rate for linear combinations of $d$ and $d_\ast$. More precisely, for each $t \in \R$, $\theta(t)$ is the unique real number such that the sum $td_\ast + \theta(t) d$ has exponential growth rate $1$. Therefore $\theta$ helps us to  interpolate between metrics whilst keeping exponential growth rate constant.

For $\rho=[d]$ and $\rho_*=[d_*]$ as above, we can consider the arc-length reparametrization of $\rho_\bullet^{d_*/d}$, denoted by $\sigma_\bullet^{\rho_*/\rho}$, such that $\sigma_0^{\rho_*/\rho}=\rho$ and $\sigma_{\Del(\rho,\rho_*)}^{\rho_*/\rho}=\rho_*$. Such reparametrization is independent of the representatives $d$, and $d_*$, so we call it the \emph{Manhattan geodesic} of the pair $\rho,\rho_*$, see Definition \ref{def.Manhgeo1}.
In this way, we produce a geodesic bicombing on $\scrD_\G$ given by $(\rho,\rho_*)\mapsto \sigma_\bullet^{\rho_*/\rho}$, which inherits some of the good behavior of the Manhattan curves, as we will prove in Theorem \ref{thm.bicombing}.

\subsection{Boundary metric structures and the Manhattan boundary}
To understand the behavior at infinity of the Manhattan geodesics, we extend our set $\scrD_\G$ to allow rough similarity classes of pseudo metrics on $\G$ that are not necessarily quasi-isometric to a word metric. 
\begin{definition}\label{def.Manhattanboundarymetrics}
    Let $\ov\calD_\G$ be the set of all the left-invariant pseudo metrics $d$ on $\G$ such that its stable translation length function is non-constant and there are some $\lam>0$ and $d_0\in \calD_\G$ such that 
     \begin{equation}\label{eq.ineqdefboundarymetric}
        (x|y)_{o,d}\leq \lam (x|y)_{o,d_0}+\lam
    \end{equation}
    for all $x,y\in \G$. We also set $\partial_M \calD_\G:=\ov\calD_\G \bs \calD_\G$.
\end{definition}
By Lemma \ref{lem.RGforHDLF} in Section \ref{sec.optqiconstant} and Lemma \ref{lem.charGro=BBT} in Section \ref{sec:examples}, every pseudo metric in $\ov\calD_\G$ is roughly geodesic and hyperbolic. Since hyperbolicity is preserved under quasi-isometry among roughly geodesic metric spaces, we have that $\calD_\G \subset \ov\calD_\G$. Moreover, a pseudo metric in $\ov\calD_\G$ belongs to $\calD_\G$ if and only if it is quasi-isometric to a word metric.

\begin{definition}\label{def.Manhattanboundarystructures}
    The \emph{Manhattan boundary} of $\scrD_\G$ is $\partial_M\scrD_\G$, the quotient of $\partial_M \calD_\G$ under the equivalence relation of rough similarity. Its elements are called \emph{boundary metric structures}. The \emph{closure} of $\scrD_\G$ is $\ov\scrD_\G:=\scrD_\G \cup \partial_M \scrD_\G$.
\end{definition}
As we show in Section \ref{sec:manhattanboundary}, the Manhattan boundary is non-empty. Indeed, by Theorem \ref{thm.dilatta} we deduce that for any two pseudo metrics $d,d_*\in \calD_\G$ that are not roughly similar, there exist pseudo metrics $d_\infty, d_{-\infty}\in \partial_M\calD_\G$ which are roughly isometric to $\Dil(d,d_*)d_*-d$ and $\Dil(d_*,d)d-d_*$, respectively. In addition, if for each $t\in \R$ we consider $d_t\in \calD_\G$ that is roughly isometric to $td_*+\thet_{d_*/d}(t)d$, then we have
\begin{equation}\label{eq.limitMgeod}
\ell_{d_\infty}[x]=\lim_{t \to \infty}{\frac{1}{-\thet_{d_*/d}(t)}\ell_{d_t}[x]} \hspace{2mm}\text{ and }\hspace{2mm}\ell_{d_{-\infty}}[x]=\lim_{t \to -\infty}{\frac{1}{-t}\ell_{d_t}}[x]
\end{equation}
for every $x\in \G$, see Proposition \ref{prop.d+-inf}. The rough similarity classes $[d_{\pm \infty}]$ are independent of the representatives $d, d_*$ in $\rho=[d], \rho_*=[d_*]$, so \eqref{eq.limitMgeod} motivates the following definition.

\begin{definition}\label{def.limitsofManhattan}
If $\sigma=\sigma_\bullet^{\rho_*/\rho}$ is the Manhattan geodesic for the pair $\rho=[d],\rho_*=[d_*]$ with $\rho\neq \rho_*$, the \emph{limit at infinity} of $\sigma$ is the unique boundary metric structure $\sigma^{\rho_*/\rho}_\infty \in \partial_M\scrD_\G$ such that every pseudo metric representing $\sigma^{\rho_*/\rho}_\infty$ is roughly similar to $\Dil(d,d_*)d_*-d$. Analogously, the \emph{limit at negative infinity} of $\sigma$ is the unique boundary metric structure $\sigma^{\rho_*/\rho}_{-\infty}$ whose pseudo metric representatives are roughly similar to $\Dil(d_*,d)d-d_*$.
\end{definition}
We will see in Section \ref{sec:manhattanboundary} that every boundary metric structure is the limit at infinity of some Manhattan geodesic. Indeed, we prove in Theorem \ref{thm.Mboundaryvisible} that we can choose this geodesic containing any given metric structure in $\scrD_\G$. In a future project, we plan to address potential topologies on the spaces $\partial_M\scrD_\G$ and $\ov\scrD_\G$.

\subsection{Examples of boundary metric structures }
Many interesting and widely studied isometric actions on hyperbolic spaces induce pseudo metrics in $\calD_\G$, and the same holds for $\partial_M \calD_\G$. We recover pseudo metrics on $\G$ by restricting to the orbits of these actions. In Section \ref{sec:examples}, we prove that the following actions induce pseudo metrics in $\ov\calD_\G$, and hence metric structures in $\ov\scrD_\G$.
\begin{theorem}\label{thm.examples}
The following actions induce points in $\ov\scrD_\G$. 
\begin{enumerate}
    \item Natural actions on coned-off Cayley graphs for finite, symmetric generating sets, where we cone-off a finite number of quasi-convex subgroups of infinite index.
    \item Non-trivial Bass-Serre tree actions with quasi-convex edge stabilizers of infinite index. More generally, cocompact actions on $\CAT(0)$ cube complexes with quasi-convex hyperplane stabilizers and without global fixed points.
    \item Small actions on $\R$-trees, when $\G$ is a surface group or a free group.
\end{enumerate}
\end{theorem}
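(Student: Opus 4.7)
The plan is to verify, for each of the three families of actions, the two defining conditions of $\ov\calD_\G$ from Definition \ref{def.Manhattanboundarymetrics}. In each case the pseudo metric arises from an orbit map $d(g,h) := d_X(g\cdot x_0, h\cdot x_0)$ for a basepoint $x_0$ in the target hyperbolic space $X$, so left-invariance is automatic from equivariance. Fix once and for all a word metric $d_0$ on $\G$ with respect to a finite symmetric generating set $S$, which is a metric in $\calD_\G$.

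The first condition to check is that the stable translation length function $\ell_d$ is non-constant. This amounts to exhibiting one element with positive and one element with zero stable translation length. In case (1), any non-torsion element in a coned-off quasi-convex subgroup has vanishing $\ell_d$, while infinite-index coned-off subgroups still leave enough room for a loxodromic element (one can combine two elements from distinct cosets by a ping-pong argument in the coned-off Cayley graph). In case (2), elements in edge stabilizers or hyperplane stabilizers have zero translation length, while the non-triviality of the tree action (resp.\ the absence of a global fixed point on the cube complex) guarantees a hyperbolic element by Bass--Serre theory (resp.\ by Sageev--Haglund--Wise theory, since quasi-convex hyperplane stabilizers make the essential core non-degenerate). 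In case (3), smallness plus non-triviality of the $\R$-tree action yields a hyperbolic isometry by the Culler--Morgan / Bestvina--Feighn machinery, and any element fixing a point has $\ell_d = 0$.

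The second condition, the Gromov product inequality $(x|y)_{o,d} \leq \lambda (x|y)_{o,d_0} + \lambda$, is the crux. The unified strategy is to show that the identity orbit map $(\G, d_0) \to (\G\cdot x_0, d)$ is coarsely Lipschitz and that geodesics for $d_0$ map to uniform unparameterized quasi-geodesics in $X$ after collapsing a controlled part. For case (1) this is built into the definition of the coned-off Cayley graph together with quasi-convexity of the cosets being coned-off; concretely, $\hat d\leq d_0$ and $S$-geodesics in $\G$ project to tracks in $\widehat{\Cay}$ that fellow-travel $\hat d$-geodesics except on coned-off segments, and the latter only shorten distances to $o$. For case (2), equivariant orbit maps from $\Cay(\G,S)$ to Bass--Serre trees (resp.\ to the $\CAT(0)$ cube complex with its combinatorial metric) are Lipschitz, and quasi-convexity of edge (resp.\ hyperplane) stabilizers guarantees that $d_0$-quasi-geodesics map to unparameterized quasi-geodesics. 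For case (3), the corresponding property is precisely Skora's theorem for surface groups and the Levitt--Lustig--Guirardel extension for small actions of free groups on $\R$-trees. In each setting, the resulting quasi-geodesic image property combined with $\delta$-hyperbolicity of $X$ yields, via the standard thin-triangle comparison of Gromov products, the required linear upper bound on $(x|y)_{o,d}$.

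The main obstacle is the last step in each case: passing from an existence statement for an equivariant boundary map (which is classical in each setting) to the quantitative linear inequality on Gromov products. My plan is to handle this uniformly by establishing a general lemma of the form: if $\varphi\colon (\G,d_0)\to (X,d_X)$ is an equivariant map between hyperbolic spaces sending $d_0$-geodesics to unparameterized $(K,K)$-quasi-geodesics, then the pseudo metric $d(g,h)=d_X(\varphi g, \varphi h)$ satisfies the Gromov-product inequality with constants depending only on $K$ and the hyperbolicity constants. Applying this lemma to the orbit maps in the three families then immediately yields membership in $\ov\calD_\G$, and hence a point in $\ov\scrD_\G$.
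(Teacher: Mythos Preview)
Your general lemma --- that an equivariant map sending $d_0$-geodesics to unparameterized quasi-geodesics yields the Gromov-product inequality --- is correct and is precisely the content of the paper's Lemmas \ref{lem.BCCchar} and \ref{lem.charGro=BBT}. So your overall architecture matches the paper's. Two smaller points and one genuine gap:

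\emph{On non-constancy.} You do not need an element with $\ell_d=0$; since $\ell_d[o]=0$ automatically, ``non-constant'' just means $\ell_d$ is not identically zero. The paper handles this uniformly via Lemma \ref{lem.unbound+trl}: for a left-invariant hyperbolic roughly geodesic pseudo metric, $\ell_d\not\equiv 0$ is equivalent to unboundedness. In particular, in case (1) the coned-off subgroups may all be finite, and then no non-torsion element has $\ell_d=0$; your argument as stated would stall there even though the conclusion is trivially true.

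\emph{On case (2).} The paper does not argue directly that $d_0$-geodesics map to quasi-geodesics in the cube complex. Instead it reduces to case (1): by Groves--Manning, quasi-convex hyperplane stabilizers force vertex stabilizers to be quasi-convex, and by Charney--Crisp the cube complex is $\G$-equivariantly quasi-isometric to the coned-off Cayley graph over those vertex stabilizers. Your direct claim is plausible but would need its own argument; the reduction is cleaner.

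\emph{The gap: case (3) for surface groups.} Skora's theorem does not assert bounded backtracking; it identifies small $\R$-tree actions of a surface group with points of the Thurston boundary (duals of measured laminations). That identification says nothing, on its face, about how Cayley-graph geodesics project to the tree. The paper does \emph{not} establish bounded backtracking directly in this case --- indeed it cites this as forthcoming work of Kapovich and Martinez-Granado for freely indecomposable groups. Instead, the paper takes a completely different route: it proves (Proposition \ref{prop.embedcurrents}) that for any non-zero geodesic current $\mu$, the Burger--Iozzi--Parreau--Pozzetti pseudo metric $d_\mu$ on $\Hy^2$ satisfies the Gromov-product inequality, using straightness of $d_\mu$ and quasi-centers. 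Skora's theorem then enters only to say that every small $\R$-tree action arises this way. Your proposal, as written, has no substitute for this step. For free groups your citation of Guirardel is the right one and matches the paper.
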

Our study of $\ov\scrD_\G$ therefore provides a unified approach to understanding metrics coming from various parts of coarse geometry. Also, by item (3) above we deduce that when $\G$ is a surface (resp.~free group), the Manhattan boundary is an extension of the Thurston (resp.~Culler-Vogtmann) boundary for Teichm\"uller (resp.~Outer) space, see Corollaries \ref{cor.smallfreegroup} and \ref{coro.boundTeichembeds}. One might ask to what extent items (1) and (2) of the theorem above can be generalized to arbitrary acylindrical actions, see Question \ref{question.acylindrical}.

In the case of surface groups, we can say something stronger since we can embed the space $\bbP\Curr(\G)$ of \emph{projective geodesic currents} into $\ov\scrD_\G$, see Corollary \ref{coro.boundTeichembeds}. This is done by analyzing the pseudo metric $d_\mu$ on $\Hy^2$ for a non-zero geodesic current $\mu$, defined by Burger, Iozzi, Parreau and Pozzetti \cite[Sec.~4]{burger-iozzi-parreau-pozzetti}. In a recent paper \cite{margra.dero}, Martinez-Granado and de Rosa study the pseudo metrics $d_{\mu}$ in more detail.

In the case of free groups, item (3) above follows since small actions of free groups on $\R$-trees have \emph{bounded backtracking}, which was proven by Guirardel \cite[Cor. 2]{guirardel}. Indeed, the inequality \eqref{eq.ineqdefboundarymetric} is a generalization of bounded backtracking for actions on hyperbolic spaces that are not necessarily trees, see Lemma \ref{lem.BCCchar}. In a forthcoming work \cite{kap.margra}, Kapovich and Martinez-Granado show that for freely indecomposable hyperbolic groups, small actions on $\R$-trees have bounded backtracking, and hence induce pseudo metrics in $\ov\calD_\G$ by Proposition \ref{prop.BBTtrees}.

\subsection{Geodesic currents and a conjecture of Bonahon}
Since the seminal work of Bonahon \cite{bonahon.currentsTeich}, there has been much interest in understanding, in various settings, which metrics admit continuous extensions to the space of currents. In the article \cite{bonahon.currentshypgroups}, Bonahon made a conjecture about which actions on $\R$-trees have a corresponding translation length function that extends continuously to the space of currents. More specifically, Bonahon conjectured that if the stable translation length function associated to an action on an $\R$-tree admits a continuous extension, then the action must be small. 

Our work allows us to produce a counter-example to this conjecture. To do so, we first prove the following result which we deduce from the fact that pseudo metrics in $\partial_M\scrD_\G$ can be represented, in some sense, as linear combinations of pseudo metrics in $\Dc_\G$, see Proposition \ref{prop.d+-inf}.

\begin{theorem}\label{prop.contextbound}
    Let $\G$ be hyperbolic and virtually torsion-free. Then for any $d\in \ov\calD_\G$, the stable translation length $\ell_d:\G\ra \R$ continuously extends to $\Curr(\G)$.
\end{theorem}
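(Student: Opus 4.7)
The plan is to reduce the theorem to the case $d\in \calD_\G$, which is known, and then to transfer the result to the boundary using the linear-combination description of $\partial_M\calD_\G$ furnished by Proposition~\ref{prop.d+-inf}.

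First, for $d\in \calD_\G$ the continuous extension of $\ell_d$ to $\Curr(\G)$ is already available for non-elementary hyperbolic groups in the prior literature (via intersection-type constructions in the style of Bonahon, Furman and Erlandsson). Since we are assuming $\G$ is virtually torsion-free, we may pass to a torsion-free finite-index subgroup $\G_0\leq \G$, use the natural injection $\Curr(\G_0)\hookrightarrow \Curr(\G)$ together with the fact that the restriction $d|_{\G_0}$ still lies in $\calD_{\G_0}$, and conclude. The key output of this step is that for any $d_0\in \calD_\G$ there is a continuous function $\widehat{\ell}_{d_0}:\Curr(\G)\to \R$ that is linear in the current and agrees with $\ell_{d_0}$ on the rational currents associated with non-torsion conjugacy classes.

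For $d\in \partial_M\calD_\G$, I would apply Proposition~\ref{prop.d+-inf} to choose $d_0,d_*\in \calD_\G$ with $[d_0]\neq [d_*]$ such that $d$ is roughly similar to $\Dil(d_0,d_*)d_*-d_0$. Because rough similarity differs from equality only by additive $O(1)$ and multiplicative scaling of the same constant on each side, and because such $O(1)$ contributions disappear in the limit defining the stable translation length, one obtains the pointwise identity
\begin{equation*}
    \ell_d[x] \;=\; \Dil(d_0,d_*)\,\ell_{d_*}[x]\;-\;\ell_{d_0}[x]
\end{equation*}
for every non-torsion $x\in \G$. I would then \emph{define} the extension on $\Curr(\G)$ to be
\begin{equation*}
    \widehat{\ell}_d \;:=\; \Dil(d_0,d_*)\,\widehat{\ell}_{d_*} \;-\; \widehat{\ell}_{d_0},
\end{equation*}
which is continuous as a linear combination of continuous functions. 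Since non-torsion rational currents are dense in $\Curr(\G)$ (this is where virtual torsion-freeness is used), and $\widehat{\ell}_d$ agrees with $\ell_d$ on them, $\widehat{\ell}_d$ is the unique continuous extension of $\ell_d$.

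The principal obstacle is the first step: establishing that $\ell_{d_0}$ extends continuously for \emph{every} $d_0\in \calD_\G$, not only for those coming from Riemannian metrics or word metrics on Cayley graphs. The virtually torsion-free hypothesis is used here both to ensure density of non-torsion rational currents and to allow the passage to a torsion-free finite-index subgroup where the extension machinery is cleaner. A secondary point is verifying that the extension produced by the formula above is independent of the choice of auxiliary pair $(d_0,d_*)$: this follows because any two such formulas yield continuous functions agreeing on the dense set of non-torsion rational currents, hence coincide.
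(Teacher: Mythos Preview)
Your approach is essentially the same as the paper's: write $\ell_d$ as a linear combination of stable translation lengths of pseudo metrics in $\calD_\G$, invoke the known continuous extension for those, and handle virtual torsion-freeness by passing to a finite-index subgroup. Two corrections are worth making.

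First, Proposition~\ref{prop.d+-inf} does not give you what you claim. That proposition starts from a pair $d_0,d_*\in\calD_\G$ and \emph{produces} boundary pseudo metrics $d_{\pm\infty}$; it does not start from an arbitrary $d\in\partial_M\calD_\G$ and represent it in the form $\Dil(d_0,d_*)d_*-d_0$. The statement you actually need is Corollary~\ref{coro.dinftisdifference} (equivalently, Theorem~\ref{thm.Mboundaryvisible}), which says precisely that every $d_\infty\in\partial_M\calD_\G$ arises this way for suitable $d_0,d_*\in\calD_\G$. The paper's proof cites exactly this corollary.

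Second, the reduction to a torsion-free finite-index subgroup $\G_0\leq\G$ goes in the opposite direction from what you wrote. Since $\partial\G_0=\partial\G$ and any $\G$-invariant measure on $\partial^2\G$ is automatically $\G_0$-invariant, one has a natural inclusion $\Curr(\G)\hookrightarrow\Curr(\G_0)$, not the reverse. The argument is then: restrict $d$ to $\G_0$ (still in $\ov\calD_{\G_0}$), apply the torsion-free case to get a continuous extension on $\Curr(\G_0)$, and restrict that extension to the subspace $\Curr(\G)$. Also, density of non-torsion rational currents in $\Curr(\G)$ holds for every non-elementary hyperbolic group (Bonahon); virtual torsion-freeness is used solely to access the known extension result for $\calD_{\G_0}$ via \cite[Cor.~5.2]{oregon-reyes.ms}.
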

In a forthcoming preprint of Kapovich and Martinez-Granado, this result is obtained without the torsion-free assumption \cite{kap.margra}. Combining Theorem \ref{thm.examples} and Theorem \ref{prop.contextbound} we settle Bonahon's conjecture in the negative.
\begin{theorem}\label{Bonahoncounterex}
    There exist hyperbolic groups $\G$ for which there is a minimal, isometric action of $\G$ on an $\R$-tree $(T,d_T)$ such that
 \begin{enumerate}
        \item the action is not small; and,
        \item the stable translation length $\ell_T$ extends continuously to $\Curr(\G)$.
    \end{enumerate}
\end{theorem}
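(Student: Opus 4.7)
The plan is to combine Theorem~\ref{thm.examples}(2) with Theorem~\ref{prop.contextbound}: the former guarantees that any non-trivial Bass-Serre tree action with quasi-convex infinite-index edge stabilizers induces a pseudo metric in $\ov\calD_\G$, and the latter promotes any such pseudo metric to one whose stable translation length extends continuously to $\Curr(\G)$. It therefore suffices to exhibit a virtually torsion-free hyperbolic group $\G$ admitting a non-trivial splitting $\G = A*_C B$ (or an HNN extension) whose edge group $C$ is quasi-convex and of infinite index, but contains a non-abelian free subgroup. The last property forces some arc stabilizer of the Bass-Serre tree $T$ to contain $F_2$, so the action is not small; minimality is automatic after passing to the unique minimal invariant subtree, which still carries a non-trivial splitting.

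For an explicit $\G$, I would take the fundamental group of a closed hyperbolic $3$-manifold $M$ containing a properly embedded, incompressible, non-fibered closed surface $\SS$ of genus $\geq 2$, and let $T$ be the Bass-Serre tree of the splitting of $\G = \pi_1(M)$ induced by $\SS$. Since $\SS$ is incompressible and non-fibered, $\pi_1(\SS)$ is geometrically finite in $\pi_1(M)$, hence quasi-convex and of infinite index; it manifestly contains $F_2$. Residual finiteness of hyperbolic $3$-manifold groups (by Agol's theorem) provides the virtual torsion-freeness needed to invoke Theorem~\ref{prop.contextbound}. Alternatively, one could build such examples from scratch using the Bestvina-Feighn combination theorem, amalgamating two hyperbolic groups over a common quasi-convex subgroup containing $F_2$ and verifying hyperbolicity of the amalgam.

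Having produced $\G$ and $T$, the verification is then immediate: the orbit pseudo metric $d_T$ on $\G$ belongs to $\ov\calD_\G$ by Theorem~\ref{thm.examples}(2); virtual torsion-freeness of $\G$ lets us apply Theorem~\ref{prop.contextbound} to conclude that $\ell_{d_T} = \ell_T$ extends continuously to $\Curr(\G)$; and the presence of $F_2$ in the edge stabilizers gives non-smallness, establishing both conclusions (1) and (2) simultaneously.

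The main obstacle is finding an example where every property is simultaneously available: hyperbolicity of $\G$, non-triviality and minimality of the splitting, quasi-convexity plus infinite index of the edge group, a non-cyclic (in fact $F_2$-containing) edge group, and virtual torsion-freeness. The closed hyperbolic $3$-manifold example bundles all these features together, so the work is really in justifying that such a manifold with a non-fibered incompressible surface exists; this is classical, for instance via Kahn-Markovic surface subgroups or direct constructions of non-fibered closed hyperbolic $3$-manifolds containing incompressible surfaces.
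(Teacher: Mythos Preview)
Your proposal is correct and follows essentially the same approach as the paper: both combine Corollary~\ref{coro.exBStree} (Bass--Serre trees with quasi-convex, infinite-index edge groups yield pseudo metrics in $\ov\calD_\G$) with Theorem~\ref{prop.contextbound}, and both instantiate the required splitting via a closed hyperbolic $3$-manifold cut along an embedded incompressible quasi-convex surface. Your treatment is in fact slightly more explicit in places---you spell out the Bonahon--Thurston dichotomy to justify quasi-convexity of the non-fibered surface subgroup and address minimality directly---whereas the paper simply asserts the existence of such a surface (after passing to a finite cover) and also offers a second, more general class of examples via Agol's virtual specialness for cubulated hyperbolic groups.
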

We will prove this result in Section \ref{sec.extensiontranslength}, where we produce examples of actions as in the theorem above from any hyperbolic group acting geometrically on a $\CAT(0)$ cube complex with a non-virtually cyclic hyperplane stabilizer. Theorem \ref{Bonahoncounterex} also suggests that for hyperbolic groups, the isometric actions on $\R$-trees to look at are not only the small ones, but also those having quasi-convex interval stabilizers. See Question \ref{qstn.Rtrees}. \\


The organization of the paper is as follows. Section \ref{sec.preliminaries} covers preliminary material concerning hyperbolic spaces and groups that we will need thorough the article. In Section \ref{sec.optqiconstant}, we prove Theorem \ref{thm.Dildistancelike} from which we deduce Theorem \ref{thm.dilatta}.
We then apply this result to obtain results regarding optimal growth rate constants for Anosov representations. Manhattan geodesics are constructed in Section \ref{sec.Manhattangeod} where we prove Theorem \ref{thm.Mangeo}. In this section, we obtain explicit formulas for the dilations of pairs of points in Manhattan geodesics and prove some properties about the geodesic bicombing of the Manhattan geodesics. In Section \ref{sec:manhattanboundary} we characterize the Manhattan boundary as the limits at infinity of Manhattan geodesics, in the form of Theorem   \ref{thm.Mboundaryvisible}. In addition, there we prove Proposition \ref{prop.chartransverse}, a criterion for a pair of points in the Manhattan boundary to be the points at infinity of a Manhattan geodesic. Examples of boundary metric structures are discussed in Section \ref{sec:examples}. 
In the final section we prove Theorem \ref{prop.contextbound} and discuss counterexamples to Bonahon's conjecture.\\

\noindent \textbf{Acknowledgements.}
The authors are grateful to Didac Martinez-Granado, Richard Canary and Karen Vogtmann for helpful comments. The second author also thanks Ian Agol for his support and encouragement throughout this project. Lastly we want to thank the anonymous referee for pointing out a mistake in a previous version of the paper and for providing useful comments and feedback that have improved the presentation of our work.
The second author was partially supported by the Simons Foundation (\#376200, Agol).


\section{Preliminaries}\label{sec.preliminaries}
\subsection{Gromov hyperbolic spaces and groups} \label{sec.hypspacesgroups}

Consider a pseudo metric space $(X,d)$. Recall that a pseudo metric is a function $d(\cdot,\cdot) : X^2 \to \mathbb{R}_{\ge 0}$ that satisfies the triangle inequality, is symmetric and has the property that $d(x,x)= 0 $ for all $x \in X$.
For each $z \in X$ the \emph{Gromov product} $( \cdot |\cdot)_z : X \times X \to \R_{\ge0}$ is defined as
\[
(x|y)_z = \frac{1}{2} (d(x,z) + d(z,y) - d(x,y)) \ \text{ for any $x,y \in X$.}
\]
When we deal with several pseudo metrics on $X$, we use the notation $(\cdot|\cdot)_{z,d}$ for the Gromov product with respect to $d$.
We say that $(X,d)$ is $\delta$-\emph{hyperbolic} if for every $x,y,z, w \in X$,
\[
(x|y)_z \ge \min\{(x|w)_z, (y|w)_z\} - \delta,
\]
and that $(X,d)$ is hyperbolic if it is $\delta$-hyperbolic for some $\delta \geq 0$.

Given pseudo metric spaces $(X,d_X)$ and $(Y,d_Y)$ we say that the function $F:X \ra Y$ is a \emph{quasi-isometric embedding} if there exist $\lam,C>0$ such that
\begin{equation*}
\frac{1}{\lam} d_X(x,y) - C \le d_Y(Fx,Fy) \le \lam d_X(x,y) + C \ \ \text{ for all $x,y \in X$.}
\end{equation*}
A quasi-isometric embedding $F:X \ra Y$ is a \emph{quasi-isometry} if in addition there is some $A\geq 0$ such that every point in $Y$ is within $A$ of some point in $F(X)$.
Two pseudo metrics $d, d_*$ on the same space $X$ are quasi-isometric if the identity map $(X,d)\ra (X,d_*)$ is a quasi-isometry. If there exist $\tau, C >0$ such that
\begin{equation}\label{eq.rs}
|\tau d(x,y) - d_*(x,y)|\le C \ \ \text{ for all 
$x,y \in X$}
\end{equation}
then we say that $d$ and  $d_*$ are \emph{roughly similar}, and \emph{roughly isometric} if \eqref{eq.rs} holds with $\tau=1$. 

A pseudo metric $d$ on $X$ is said to be \emph{geodesic} if every two elements in 
$X$ can be joined by an arc isometric to the interval of length equal to the distance between the two points. Given $\al\geq 0$, a pseudo metric $d$ is said to be $\alpha$-\emph{roughly geodesic} if for any $x, y \in X$ there is a sequence of points $x = x_0, \ldots, x_n = y \in X$  such that for all $0\le i \leq j \le n$
\begin{equation}\label{eq.rg}
    |j - i | -\alpha \le d(x_i,x_j) \le |j-i| + \alpha.
\end{equation}
Such a sequence $x_0, \ldots, x_n \in X$ for which \eqref{eq.rg} holds is referred to as an $\alpha$-\emph{rough geodesic}, or an $(\al,d)$-rough geodesic if we want to emphasize the dependence on $d$. A pseudo metric space is roughly geodesic if it is $\al$-rough geodesic for some $\al$.

\begin{remark}
By abusing notation, we also extend the definition of rough similarity/isometry to functions on $X\times X$ that are not necessarily pseudo metrics. Similarly, we can talk about nonnegative functions on $X \times X$ being hyperbolic or roughly geodesic.
\end{remark}
Hyperbolicity can also be characterised using quasi-centers. Given $\al,\del \geq 0$, there exists $\k=\k(\al,\del)$ such that any triple of points $x,y,z$ in the $\del$-hyperbolic, $\al$-rough geodesic pseudo metric space $(X,d)$ has a $\k$-\emph{quasi-center}. That is, there is a point $p \in X$ such that
\begin{equation*}
\max \left\{(x|y)_{p, d},(y | z)_{p, d},(z | x)_{p, d}\right\} \leq \k.
\end{equation*}
We say that $p$ is a $\left(\k , d \right)$-quasi-center if we want to make explicit that $p$ is a $\k$-quasi-center with respect to the pseudo metric $d$.

We will also require the following result \cite[Prop.~15~(i),~Ch.~5]{ghys.delaharpe}.

\begin{proposition}\label{prop.qiGromovprod}
For all $\al,\del,\ep\geq 0$ and $\lam_1,\lam_2>0$ there exists some $C\geq 0$ such that the following holds. Let $(X,d_X), (Y,d_Y)$ be $\del$-hyperbolic and $\al$-rough geodesic pseudo metric spaces, and let $F:X\ra Y$ satisfy
\begin{equation*}
    \frac{1}{\lam_1}d_X(x,y)-\ep\leq d_Y(Fx,Fy)\leq \lam_2d_X(x,y)+\ep
\end{equation*}
for all $x,y\in X$. Then for all $x,y,w\in X$:
\begin{equation*}
    \frac{1}{\lam_1}(x|y)_{w,d_X}-C\leq (Fx|Fy)_{Fw,d_Y}\leq \lam_2(x|y)_{w,d_X}+C.
\end{equation*}
\end{proposition}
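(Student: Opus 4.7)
The plan is to express each Gromov product as a coarse distance from the basepoint to a rough geodesic joining the two points, and then to transport this description across $F$ using stability of quasi-geodesics in hyperbolic spaces.

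As a first step, I would recall the standard fact that in any $\delta$-hyperbolic, $\alpha$-rough geodesic pseudo metric space $(Z,d_Z)$ the Gromov product $(a|b)_c$ coincides, up to an additive constant depending only on $\delta$ and $\alpha$, with the distance $d_Z(c,\gamma)$ between $c$ and any $\alpha$-rough geodesic $\gamma$ joining $a$ and $b$. Applying this to $X$ and $Y$, the proposition will follow once I can produce a rough geodesic between $Fx$ and $Fy$ in $Y$ whose distance from $Fw$ is controlled above by $\lambda_2 d_X(w,\gamma)$ and below by $\tfrac{1}{\lambda_1}d_X(w,\gamma)$ (up to additive error) for a suitable rough geodesic $\gamma$ from $x$ to $y$ in $X$.

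To that end, I would pick an $\alpha$-rough geodesic $x=x_0,x_1,\dots,x_n=y$ in $X$ and feed it through $F$. The quasi-isometric bounds on $F$ together with the definition of an $\alpha$-rough geodesic show that $Fx_0,Fx_1,\dots,Fx_n$ is a discrete quasi-geodesic in $Y$ with parameters depending only on $\lambda_1,\lambda_2,\epsilon,\alpha$. By the Morse lemma for discrete quasi-geodesics in $\delta$-hyperbolic pseudo metric spaces (see, e.g., Ghys--de la Harpe, Ch.~5), this sequence lies within Hausdorff distance $D=D(\delta,\alpha,\lambda_1,\lambda_2,\epsilon)$ of any $\alpha$-rough geodesic from $Fx$ to $Fy$ in $Y$. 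Hence, writing $\gamma=\{x_0,\dots,x_n\}$, the quantity $d_Y(Fw,F(\gamma))$ agrees with $(Fx|Fy)_{Fw,d_Y}$ up to an additive constant of the required form.

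Finally I would compare $d_Y(Fw,F(\gamma))$ directly with $d_X(w,\gamma)$. For the upper bound, for any $p\in\gamma$ the quasi-isometric embedding gives $d_Y(Fw,Fp)\le\lambda_2 d_X(w,p)+\epsilon$, and taking the infimum over $p$ yields $d_Y(Fw,F(\gamma))\le\lambda_2 d_X(w,\gamma)+\epsilon$. For the lower bound, picking $p'\in\gamma$ (approximately) realizing $d_Y(Fw,F(\gamma))$ we get $d_X(w,p')\le\lambda_1\bigl(d_Y(Fw,Fp')+\epsilon\bigr)$, hence $d_X(w,\gamma)\le\lambda_1 d_Y(Fw,F(\gamma))+\lambda_1\epsilon$. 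Combining these two inequalities with the coarse identifications $(x|y)_{w,d_X}\asymp d_X(w,\gamma)$ and $(Fx|Fy)_{Fw,d_Y}\asymp d_Y(Fw,F(\gamma))$ produces the desired inequalities with a uniform constant $C$.

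The most delicate step is the middle one: I need a version of the stability of quasi-geodesics (Morse lemma) valid for \emph{discrete} quasi-geodesics in hyperbolic \emph{pseudo} metric spaces that are only \emph{roughly} geodesic, with constants that depend only on $\alpha,\delta,\lambda_1,\lambda_2,\epsilon$. A secondary nuisance is that in a pseudo metric space infima need not be attained, so in the lower-bound argument above I would replace ``realizing'' by ``realizing up to an additive $1$'' and absorb the error into $C$.
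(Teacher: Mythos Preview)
The paper does not actually prove this proposition: it simply cites it as \cite[Prop.~15~(i),~Ch.~5]{ghys.delaharpe}. Your proposed argument is correct and is precisely the standard proof one finds in that reference, namely identifying the Gromov product with the coarse distance to a (rough) geodesic and invoking stability of quasi-geodesics; the caveats you flag about discrete quasi-geodesics and non-attained infima in pseudo metric spaces are real but easily absorbed into the additive constant as you indicate.
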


\subsection{Hyperbolic groups} 
 Suppose that $\G$ is a finitely generated group. Let $S \subset \G$ be a, not necessarily symmetric, set of elements that generates $\G$ as a semi-group. From this set we can equip $\G$ with the corresponding word length function $|\cdot|_S : \G \to \R_{\ge 0}$ that assigns to a group element $x$ the length of the shortest word(s) that represents $x$ with letters in $S$, i.e.
\[
|x|_S = \min\{ n \in \Z_{\ge 0} : x = s_1 \cdots s_n \ \text{ with $s_1,\ldots,s_n \in S$} \} \ \text{ for each $x \in \G$}.
\]
By convention, the identity is assigned word length $0$. The word metric is the (not necessarily symmetric) distance $d_S(\cdot,\cdot):\G \times \G \ra \R_{\geq 0}$ given by 
$$d_S(x,y):=|x^{-1}y|_S \  \text{ for }x,y\in \G.$$
We say that $\G$ is \emph{hyperbolic} if for some finite, symmetric generating set $S$, $(\G, d_S)$ is a hyperbolic metric space. All hyperbolic groups we consider will implicitly be assumed to be \emph{non-elementary}, i.e. we will assume  that they do not contain a finite index cyclic subgroup. In general, we will say that a pseudo metric $d$ on $\G$ is hyperbolic (resp.~roughly geodesic) if $(X,d)$ is a hyperbolic (resp.~roughly geodesic) pseudo metric space.

As discussed in the introduction we will be interested in the collection $\Dc_\G$ of hyperbolic pseudo metrics on $\G$ that are quasi-isometric to a word metric and that are $\G$-invariant: $d( h x, hy ) = d(x,y)$ for all $h,x,y \in \G$.
Pseudo metrics in $\Dc_\G$ are necessarily roughly geodesic \cite[Thm.~1.10]{bhm}. We will use the notation $h(d)$ to denote the exponential growth rate of $d\in \calD_\G$,
\[
h(d) = \limsup_{n\to\infty} \frac{1}{n} \log \# \{x\in\G: d(o,x) < n \}=\limsup_{n\to\infty} \frac{1}{n} \log \# \{[x]\in\conj: \ell_d[x] < n \},
\]
which is always finite and strictly positive.
\begin{example}
    Pseudo metrics belonging to $\Dc_\G$ include: word metrics for finite, symmetric generating sets, orbit pseudo metrics associated to cocompact, isometric and properly discontinuous actions on hyperbolic geodesic metric spaces, and Green metrics associated to finitely supported symmetric random walks that visit the whole group \cite[Cor.~1.2]{bhm}.
\end{example}

\subsection{The Manhattan curve} \label{sec.manhattan}
Consider two pseudo metrics $d,d_\ast \in \Dc_\G$. 
We define the \emph{Manhattan curve} associated to this pair to be the boundary of the convex set
\[
\mathcal{C}_{d_\ast/d}^M = \left\{ (a,b) \in \R^2 : \sum_{x\in\G} e^{-a d_\ast(o,x) - b d(o,x)} < \infty \right\}.
\]
Convexity of $\mathcal{C}_{d_\ast/d}^M$ follows from H\"older's inequality. Manhattan curves were first introduced by Burger for the displacement functions associated to actions on rank 1 symmetric spaces \cite{burger}. In the current setting, $\mathcal{C}_{d_\ast/d}^M$ was studied by Cantrell and Tanaka in \cite{cantrell.tanaka.2} and \cite{cantrell.tanaka.Man} when $d,d_\ast$ are metrics in $\Dc_\G$. In these works regularity and rigidity results pertaining to these curves were obtained. For example, Theorem 1.1 in \cite{cantrell.tanaka.Man} states that $\calC^M_{d_\ast/d}$ is a straight line if and only if $d$ and $d_\ast$ are roughly similar.

The Manhattan curve for $d,d_\ast$ can be parameterised using a function $\theta_{d_\ast/d} :\R\to\R$ which we define in the following way. For each $t \in \R$, let $\theta_{d_\ast/d}(t)$ be the abscissa of convergence of 
\[
\sum_{x\in\G} e^{-td_\ast(o,x) - sd(o,x)}
\]
as $s$ varies and $t$ remains fixed. That is, fixing $t$ and allowing $s$ to vary, the above series converges for $s > \theta_{d_\ast/d}(t)$ and diverges for $s < \theta_{d_\ast/d}(t)$.

\begin{remark}\label{rmk.conj=distMC}
An equivalent way of obtaining the Manhattan curve is to use the stable translation length functions $\ell_d$, $\ell_{d_\ast}$ and to count over conjugacy classes. In particular, in either of definitions of the Manhattan curve above, either via the set $\mathcal{C}^M_{d_\ast/d}$ or via the parameterisation $\theta_{d_\ast/d}$, if one replaces the metrics and counting over group elements with the stable translation length functions and counting over conjugacy classes, we obtain the same curve \cite[Prop.~3.1]{cantrell.tanaka.Man}.
\end{remark}

Convex functions from $\R$ to $\R$ are continuous and differentiable Lebesgue almost everywhere. Cantrell and Tanaka showed Manhattan curves have better regularity.
\begin{theorem}[{\cite[Thm.~1.1]{cantrell.tanaka.Man}}] \label{thm.manreg}
    Let $\theta_{d_\ast/d}$ be the Manhattan curve for $d,d_\ast \in \Dc_\G$ as defined above. Then $\theta_{d_\ast/d}$ is strictly decreasing, convex, and $C^1$, i.e. it has continuous first derivative.
\end{theorem}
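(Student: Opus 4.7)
My plan is to realize $\theta := \theta_{d_*/d}$ as the solution of a topological pressure equation on a subshift of finite type, and then deduce $C^1$ regularity from the real-analyticity of pressure with respect to H\"older potentials.

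First, I would fix a strongly Markov structure $(\mathcal{G}, \pi, S)$ for $\G$ together with a maximal component $\mathcal{C}$. The bi-infinite paths through $\mathcal{C}$ form a topologically mixing subshift of finite type $(\Sigma,\sigma)$ whose topological entropy equals $h(d_S)$. The central technical step is to promote each pseudo metric $d' \in \{d,d_*\}$ to a H\"older potential $\varphi_{d'} : \Sigma \to \R$ whose Birkhoff sums satisfy
\[
S_n \varphi_{d'}(\xi) \;=\; d'(o,\, \pi(\xi_1)\cdots \pi(\xi_n)) \,+\, O(1).
\]
A natural construction is the horofunction-type cocycle $\varphi_{d'}(\xi) = \lim_{n\to\infty}\bigl(d'(o,x_n(\xi)) - d'(o,x_{n-1}(\xi))\bigr)$ where $x_n(\xi) = \pi(\xi_1)\cdots \pi(\xi_n)$. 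Existence of this limit uses that $d'$ is hyperbolic and roughly geodesic (so Busemann-type cocycles along sequences converging to $\partial\G$ are well-defined), while H\"older regularity uses exponential contraction of the shift toward the Gromov boundary combined with the mutual quasi-isometry of all metrics in $\Dc_\G$.

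Next, I would use Proposition \ref{prop.maxreccomp} (growth quasi-tightness) to compare the group Poincar\'e series with a symbolic Poincar\'e sum over paths in $\mathcal{C}$, up to bounded multiplicative error. This translates finiteness of $\sum_{x\in\G} e^{-td_*(o,x) - sd(o,x)}$ into the condition $P(-t\varphi_{d_*} - s\varphi_d) < 0$, where $P$ denotes topological pressure on $\Sigma$. Hence $\theta(t)$ is characterized implicitly by the equation $P(-t\varphi_{d_*} - s\varphi_d) = 0$. Because $\varphi_d$ is H\"older with strictly positive equilibrium-state mean (its Birkhoff sums grow linearly), the map $s \mapsto P(-t\varphi_{d_*} - s\varphi_d)$ is real-analytic in $(t,s)$ and strictly decreasing in $s$, and the implicit function theorem combined with Ruelle's analyticity of pressure yields that $\theta$ is real-analytic on any open interval where a single maximal component dominates the series.

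The main obstacle is that $\mathcal{G}$ typically has several maximal components, in which case $\theta(t)$ is realized as the upper envelope of the analytic functions $\theta_{\mathcal{C}_i}(t)$ produced by each component. Convexity guarantees a piecewise-analytic convex profile, but $C^1$ could a priori fail at a break point where the dominating component switches. To rule this out, I would argue that the ratios $\int \varphi_{d_*}\,d\mu_{\mathcal{C}_i} / \int \varphi_{d}\,d\mu_{\mathcal{C}_i}$ of equilibrium-state averages must agree across competing components at any switch, so that the one-sided derivatives of the respective $\theta_{\mathcal{C}_i}$ coincide there. This is where the bounded thickening from Proposition \ref{prop.maxreccomp} is decisive: any element encoded in one maximal component can, up to bounded multiplicative cost, be encoded in any other, forcing the corresponding Patterson--Sullivan-type data to match exponential rates at coincidence. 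Matching one-sided derivatives then promote the piecewise-analytic envelope to a $C^1$ function, completing the proof.
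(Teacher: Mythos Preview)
The paper does not contain a proof of this theorem. Theorem~\ref{thm.manreg} is quoted verbatim as Theorem~1.1 of \cite{cantrell.tanaka.Man} and is used throughout as a black box; the remark immediately following it even notes that stronger analyticity holds for certain pairs, again by citation to \cite{cantrell.tanaka.2}. There is therefore nothing in the present paper to compare your argument against.

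That said, your outline is in the right spirit for how the cited result is actually established: one encodes the group combinatorially, builds H\"older potentials over a subshift whose Birkhoff sums track $d$ and $d_*$, and realizes $\theta$ via a pressure equation. The serious gap in your sketch is the treatment of multiple maximal components. Your claim that ``bounded thickening from Proposition~\ref{prop.maxreccomp} forces the Patterson--Sullivan-type data to match exponential rates at coincidence'' is not an argument: at a putative break point the two competing branches $\theta_{\calC_i}$ agree in value by definition of the envelope, but matching their derivatives requires matching the ratios $\int \varphi_{d_*}\,d\mu_i / \int \varphi_d\,d\mu_i$ of equilibrium-state averages, and there is no mechanism in Proposition~\ref{prop.maxreccomp} that identifies equilibrium states (or even their averages) across distinct components. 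Bounded thickening compares cardinalities up to multiplicative constants, which controls zeroth-order growth but says nothing about the first-order (derivative) information carried by the equilibrium measures. In the actual proof of \cite{cantrell.tanaka.Man} this difficulty is circumvented differently: rather than comparing components, one works with a Patterson--Sullivan/Bowen--Margulis construction intrinsic to the group that is insensitive to the choice of component, and differentiability of $\theta$ is read off from properties of that single family of measures. If you want to complete your approach along component-wise lines, you would need an independent argument that the relevant equilibrium averages are component-independent, which is essentially as hard as the theorem itself.
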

\begin{remark}
    In fact, for certain pairs of pseudo metrics $d,d_\ast \in \Dc_\G$, the associated Manhattan curve is known to be analytic \cite{cantrell.tanaka.2}. This is the case for pairs of word metrics for example.
\end{remark}
Note that in \cite{cantrell.tanaka.Man} this result was proved for metrics opposed to pseudo metrics, however the same proof applies to the above case. Theorem \ref{thm.manreg} is critical to our arguments and we will use it implicitly throughout this work.

\subsection{Geodesic currents}\label{subsec:currents}
Let $\partial \G$ denote the Gromov boundary of $\G$, which is an infinite, compact metrizable space since $\G$ is non-elementary. The double boundary is the set $\partial^2 \G$ of unordered pairs of distinct points in $\G$, endowed with the expected topology and the diagonal action of $\G$. A \emph{geodesic current} on $\G$ is a locally finite, $\G$-invariant measure $\mu$ on $\partial^2 \G$, meaning that $\mu(K)$ is finite for any compact subset $K\subset \partial^2 \G$. We let $\Curr(\G)$ denote the space of geodesic currents equipped with the weak$^*$ topology. Geodesic currents were introduced by Bonahon, first for surface groups \cite{bonahon.currentsTeich} and later for general hyperbolic groups \cite{bonahon.currentshypgroups}. 

The space of geodesic currents can be thought of as a completion of the space of conjugacy classes of $\G$ in the following sense: if $[x]$ is the conjugacy class of the non-torsion element $x\in \G$, then $x=y^n$ for $y\in \G$ a primitive element and $n\neq 0$. If $y_{\infty}, y_{-\infty}$ denote the two points in $\partial \G$ that are fixed by $y$, then the set $\calA_{[y]}=\cor{\cor{gy_{-\infty},gy_{+\infty}} \colon g\in \G}$ is a discrete, $\G$-invariant subset of $\partial^2\G$. In this way, the \emph{rational} current associated to $[y]$ is given by
$$\eta_{[y]}=\sum_{\cor{p,q}\in \calA_{[y]}}{\delta_{\cor{p,q}}},$$
and similarly we define $\eta_{[x]}=|n|\eta_{[y]}$. The set $\cor{\lam \eta_{[x]}\colon \lam>0,[x]\in \conj'}$ turns out to be dense in $\Curr(\G)$ \cite[Thm.~7]{bonahon.currentshypgroups}.

By considering $\bbP\Curr(\G):=(\Curr(\G)\bs \cor{0})/\R^+$, where the action of $\R^+$ is given by scalar multiplication, we obtain the space of \emph{projective geodesic currents}, which is compact and metrizable when equipped with the quotient topology \cite[Prop.~6]{bonahon.currentshypgroups}.

\section{Optimal quasi-isometry constants}\label{sec.optqiconstant}
In this section, we prove Theorem \ref{thm.dilatta}. 
It will follow from Theorem \ref{thm.Dildistancelike}, which works in the more general setting of hyperbolic distance-like functions defined below. We also apply this result to norm and eigenvalue functions associated to Anosov representations in Section \ref{sec.Anosov}.

\subsection{Hyperbolic distance-like functions}
\begin{definition}
    By a \emph{hyperbolic distance-like function} on $\G$, we mean a function $\psi:\G \times \G \ra \R$ satisfying the following:
    \begin{enumerate}
        \item positivity: $\psi(x,y)\geq 0$ and $\psi(x,x)=0$ for all $x,y\in \G$;
        \item the triangle inequality: $\psi(x,z)\leq \psi(x,y)+\psi(y,z)$ for all $x,y,z\in \G$;
        \item $\G$-invariance: $\psi(sx,sy)=\psi(x,y)$ for all $x,y,s\in \G$; and,
        \item for any $d_0\in \calD_\G$ and $C\geq 0$ there exists $D\geq 0$ such that the following holds: if $x,y, w\in \G$ are such that $(x|y)_{w,d_0}\leq C$, then their Gromov product for $\psi$ satisfies $$(x|y)_{w,\psi}:=\frac{(\psi(x,w)+\psi(w,y)-\psi(x,y))}{2} \leq D.$$
    \end{enumerate}
\end{definition}
Note that pseudo metrics belonging to $\ov\calD_\G$ are hyperbolic distance-like functions. This is also the case for the logarithm of the norms of Anosov representations of $\G$, see Lemma \ref{lem.An-hdlf}.

For our purposes, a key property is that hyperbolic distance-like functions are roughly geodesic, in the sense that they satisfy condition \eqref{eq.rg}.

\begin{lemma}\label{lem.RGforHDLF}
    Let $\G$ be a non-elementary hyperbolic group. If $\psi:\G \times \G \ra \R$ is a hyperbolic distance-like function, then $\psi$ is roughly geodesic. In particular, every pseudo metric belonging to $\ov\calD_\G$ is roughly geodesic. Moreover, for every $d_0\in \calD_\G$ and $\al_0\geq 0$ there is some $\al\geq 0$ such that if  $x=z_0,\dots,z_m=y$ is an $(\al_0,d_0)$-rough geodesic, then we can find a non-decreasing subsequence $0=i(0)\leq i(1)\leq \cdots \leq  i(n)=m$ such that $z_{i(0)},\dots,z_{i(n)}$ is an $(\al,\psi)$-rough geodesic.  
\end{lemma}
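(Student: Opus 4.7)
My plan is to reduce the first assertion to the moreover part: since every $d_0\in\calD_\G$ is roughly geodesic, each pair $x,y\in\G$ admits an $(\al_0,d_0)$-rough geodesic, and extracting a $\psi$-rough geodesic subsequence yields the required $\psi$-rough paths. So I would fix an $(\al_0,d_0)$-rough geodesic $z_0,\dots,z_m$ in $\G$ and set $f(i):=\psi(z_0,z_i)$. The central idea is that $f$ should be ``almost monotone increasing'' from $0$ to $L:=\psi(z_0,z_m)$ with uniformly bounded increments, so I will select $i(p):=\min\{i:f(i)\geq p\}$ for $p\in\{0,1,\dots,n-1\}$ and $i(n):=m$, where $n:=\max(\lceil L\rceil,1)$, and verify that the resulting subsequence is an $(\al,\psi)$-rough geodesic for some $\al=\al(d_0,\al_0)$.

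Two preliminary estimates drive the argument. First, the $\psi$-step bound: because $d_0$ is quasi-isometric to a word metric, the set $\{g\in\G:d_0(o,g)\leq 1+\al_0\}$ is finite, so by $\G$-invariance of $d_0$ and $\psi$ there exists $M=M(d_0,\al_0)$ with $\psi(z_i,z_{i+1})=\psi(o,z_i^{-1}z_{i+1})\leq M$ for every $i$. Second, a monotonicity estimate from property~(4): the rough geodesic inequalities for $d_0$ yield $(z_i|z_k)_{z_j,d_0}\leq\tfrac{3}{2}\al_0$ for all $0\leq i\leq j\leq k\leq m$, so property~(4) produces some $D=D(d_0,\al_0)$ with $(z_i|z_k)_{z_j,\psi}\leq D$, i.e.
\[
\psi(z_i,z_j)+\psi(z_j,z_k)\leq\psi(z_i,z_k)+2D\quad\text{whenever }0\leq i\leq j\leq k\leq m.
\]

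It remains to verify the rough geodesic inequalities for the subsequence. Minimality of $i(p)$ together with the step bound gives $f(i(p))\in[p,p+M)$ for $p<n$, while $f(i(n))=L\leq n$; the indices $i(p)$ are automatically non-decreasing with $i(0)=0$ and $i(n)=m$. For $0\leq p\leq q\leq n$, the monotonicity estimate applied to $z_0,z_{i(p)},z_{i(q)}$ yields the upper bound
\[
\psi(z_{i(p)},z_{i(q)})\leq f(i(q))-f(i(p))+2D\leq (q-p)+M+2D,
\]
while the triangle inequality combined with the range of $f(i(\cdot))$ gives the lower bound
\[
\psi(z_{i(p)},z_{i(q)})\geq f(i(q))-f(i(p))\geq (q-p)-M-1.
\]
Hence the subsequence is an $(\al,\psi)$-rough geodesic with $\al:=M+2D+1$. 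The delicate point is the parametrization: the new integer indices must track $\psi$-distance from $z_0$ up to a uniformly bounded error, and the ``first time $f$ exceeds $p$'' rule achieves exactly this because of the uniform step bound together with approximate monotonicity—any scheme that only controls consecutive $\psi$-distances (say, making them close to $1$) would incur an additive error proportional to $q-p$ rather than the constant $\al$ needed for a rough geodesic.
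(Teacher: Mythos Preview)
Your argument is correct and follows essentially the same route as the paper's proof: both fix an $(\al_0,d_0)$-rough geodesic, use the finite $d_0$-ball bound to control consecutive $\psi$-increments, extract a subsequence along which $\psi(z_0,\cdot)$ tracks the integers, and then invoke property~(4) applied to the triple $z_0,z_{i(p)},z_{i(q)}$ (via the estimate $(z_0|z_{i(q)})_{z_{i(p)},d_0}\leq\tfrac{3}{2}\al_0$) to turn this into a two-sided rough geodesic bound. The only difference is packaging: the paper isolates the ``subsequence with bounded deviation from integers'' step as a standalone real-sequence lemma, whereas you prove it inline with the first-hitting-time rule $i(p)=\min\{i:f(i)\geq p\}$; the resulting constants differ cosmetically but the ideas coincide.
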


\begin{remark}\label{rmk.hdlfhyp}
By Lemma \ref{lem.RGforHDLF} and the invariance of hyperbolicity under quasi-isometry among roughly geodesic spaces, we deduce that all symmetric hyperbolic distance-like functions that are quasi-isometric to a word metric are hyperbolic pseudo metrics, and hence lie in $\calD_\G$.  In Corollary \ref{cor.hyp} we will see that in fact \emph{every} pseudo metric in $\ov\calD_\G$ is hyperbolic. 
\end{remark}

Lemma \ref{lem.RGforHDLF} requires the following simple lemma, the proof of which we leave to the reader. 
\begin{lemma}\label{lemma:seqRG} Let $a_0,\dots,a_m$ be a sequence of real numbers and $L \geq 0$ be such that $a_0\leq a_m$ and $|a_i-a_{i+1}| \leq L$ for $0 \leq i<m$. Assume that $[a_0, a_m] \cap \mathbb{Z}=\{k, k+1, \ldots, k+n\}$. Then there exists a non-decreasing subsequence $0=i(0) \leq i(1) \leq \cdots \leq i(n)=m$ such that $\left|(k+j)-a_{i(j)}\right| \leq (L+2)/ 2$ for all $0 \leq j \leq n$.
\end{lemma}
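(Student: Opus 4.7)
The plan is to define the subsequence explicitly by thresholding the values $a_i$ against the integers $k+j$. Write $m_j := k+j$ for $0 \le j \le n$. Set $i(0) := 0$ and $i(n) := m$ by convention, and for each interior index $1 \le j \le n-1$ let
\[
i(j) := \min\{i \in \{0,1,\ldots,m\} : a_i \ge m_j - L/2\},
\]
which is well-defined since $a_m \ge m_n \ge m_j > m_j - L/2$.

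Monotonicity $i(0) \le i(1) \le \cdots \le i(n)$ is essentially immediate. For interior pairs $j < j'$ in $\{1,\ldots,n-1\}$, the sets defining $i(j)$ are nested decreasingly because the threshold $m_j - L/2$ is strictly increasing in $j$, so $i(j) \le i(j')$; the boundary comparisons $i(0) = 0 \le i(1)$ and $i(n-1) \le m = i(n)$ are automatic.

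For the distance bound I would treat the endpoints and the interior indices separately. The hypothesis that $k$ and $k+n$ are respectively the smallest and largest integers in $[a_0, a_m]$ forces $a_0 \in (k-1, k]$ and $a_m \in [k+n, k+n+1)$, so $|a_0 - m_0| < 1$ and $|a_m - m_n| < 1$, both dominated by $(L+2)/2$ since $L \ge 0$. For an interior index $1 \le j \le n-1$, if $i(j) \ge 1$ then by minimality $a_{i(j)-1} < m_j - L/2$, and the step hypothesis $|a_{i(j)} - a_{i(j)-1}| \le L$ gives $a_{i(j)} < m_j + L/2$; together with $a_{i(j)} \ge m_j - L/2$ this yields $|a_{i(j)} - m_j| < L/2$. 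If instead $i(j) = 0$, then the inequalities $m_j - L/2 \le a_0 \le k \le m_j$ (the last because $j \ge 1$) give $|a_0 - m_j| \le L/2$ directly. In every case the bound $(L+2)/2$ holds.

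There is no real obstacle; the one delicate choice is the calibration of the threshold $m_j - L/2$, which is tight enough to ensure an interior bound of $L/2$ while still being compatible with the forced endpoints. The endpoint contribution $< 1$ (free from the definition of $k$ and $k+n$) together with the interior contribution $\le L/2$ is neatly subsumed by the uniform tolerance $(L+2)/2 = L/2 + 1$.
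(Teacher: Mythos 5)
Your proof is correct. The paper leaves the proof of this lemma to the reader, so there is no official argument to compare against, but the thresholding construction you give is a clean and complete way to establish it. The key steps all check out: the set $\{i : a_i \ge m_j - L/2\}$ contains $m$ (since $a_m \ge k+n \ge m_j$), so the minimum is well-defined; monotonicity of $i(j)$ in $j$ follows because the thresholds are increasing; the bound $|a_{i(j)} - m_j| \le L/2$ for interior $j$ follows from minimality of $i(j)$ together with the step bound $|a_{i(j)} - a_{i(j)-1}| \le L$ (and in the $i(j)=0$ subcase from $m_j - L/2 \le a_0 \le k \le m_j$); and the endpoint bounds $|a_0 - k| < 1$ and $|a_m - (k+n)| < 1$ follow directly from the definition of $k$ and $k+n$ as the extreme integers of $[a_0, a_m]$. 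Both contributions are uniformly dominated by $(L+2)/2$. One cosmetic remark: as with the lemma statement itself, the construction tacitly assumes $n \ge 1$; if $n = 0$ the requirement $i(0) = 0$ and $i(n) = m$ is only consistent when $m = 0$, but that degeneracy is inherited from the statement and is not an additional defect of your argument.
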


\begin{proof}[Proof of Lemma \ref{lem.RGforHDLF}] Let $d_0\in \calD_\G$, and let $x=z_0, \dots, z_m=y$ be an $(\al_0,d_0)$-rough geodesic. Let $L:=\max \{\psi(o, u) \colon d_0(o, u) \leq 1+\alpha_0\}$, which is finite because $\G$ is finitely generated and $d_0$ is proper. Since $\psi$ is $\Gamma$-invariant and satisfies the triangle inequality, the sequence $a_i :=\psi(z_0, z_i)$ satisfies the assumptions of Lemma \ref{lemma:seqRG}. Therefore, for each integer $j$ between $0$ and $\psi(o, y)$ there exists some $x_j:=z_{i(j)}$ with $\left|\psi(x, x_j)-j\right| \leq(L+2) / 2$, and such that $i(j) \leq i(j+1)$ for all $j$. Let us suppose this sequence is $x=x_0, \dots, x_n=y$, which lies in an $(\al_0,d_0)$-rough geodesic. We obtain $\left(x| x_j\right)_{x_i,d_0} \leq 3 \al_0 / 2$ for $0 \leq i \leq j \leq n$. But $\psi$ is a hyperbolic distance-like function, implying that $\left(x|x_j\right)_{x_i,\psi} \leq D$ for some $D$ independent of the sequence. This translates to
\[ \psi(x_i, x_j) \leq 2 D+\psi(x, x_j)-\psi(x, x_i) \leq j-i+2 D+L+2\]
for all $0\leq i\leq j\leq n$. On the other hand, by the definition of the $x_i$'s and the triangle inequality we have
\[\psi(x_i, x_j) \geq \psi(x, x_j)-\psi(x, x_i) \geq j-i-(L+2)\]
for all $0\leq i\leq j\leq n$, so that $x_0,\dots,x_n$ is an $(\al,\psi)$-rough geodesic with $\alpha:=2 D+L+2$.
\end{proof}
 
Another important property of hyperbolic distance-like functions is that they can be coarsely approximated by word metrics. The proof of the next result follows exactly as in the proof of \cite[Lem.~5.1]{oregon-reyes.ms} which shows that pseudo metrics in $\Dc_\G$ can be approximated by word metrics. We leave the details to the reader. 
\begin{lemma}\label{lem:HDLFapproxword} Let $\psi$ be a hyperbolic distance-like function that is $\alpha$-roughly geodesic. For $n>\alpha+1$, let $S_n:=\{x \in \Gamma \mid \psi(o, x) \leq n\}$. Then $S_n$ generates $\Gamma$ as a semigroup and for all $x \in \Gamma$ we have
$$
(n-\alpha-1)|x|_{S_n}-(n-1) \leqslant \psi(o, x) \leq n|x|_{S_n} .
$$
\end{lemma}

In the lemma above the sets $S_n$ are not necessarily finite since $\psi$ might not be proper, i.e. the balls $\{x\in \G: \psi(o,x)\leq R\}$ are not required to be finite. However, if $\psi$ is symmetric and proper, then it is quasi-isometric to a word metric for a finite generating set, and hence it belongs to $\calD_\G$. This implies the following.

\begin{corollary}\label{coro.calDiffproper}
    Let $d$ be a pseudo metric in $\ov\calD_\G$. Then $d\in \calD_\G$ if and only if $d$ is proper, meaning that the balls $\{x\in \G: d(x,o)\leq R\}$ are finite for all $R>0$.
\end{corollary}

A criterion for properness is given by the following lemma, which will be used in the proof of Proposition \ref{prop.da} and might be of independent interest.

\begin{lemma}\label{lem.proper}
    Let $\G$ be a non-elementary hyperbolic group and let $(d_n)_n$ be a sequence of (not necessarily hyperbolic) left-invariant pseudo metrics pointwise converging to the pseudo metric $d_\infty$. If $\limsup_{T \to \infty}{\frac{\log\# \{x\in \G: d_n(o,x)<T\}}{T}}=1$ for each $n$, then $d_\infty$ is proper.
\end{lemma}

\begin{proof}
  We begin with the following observation: if $S \subset \G$ is a finite and symmetric generating set, then its exponential growth rate $h_S=h(d_S)$ satisfies $h_S\leq \max_{x\in S}{d_\infty(x,o)}$. To show this, for $\ep>0$ we consider $n$ large enough so that $\max_{x\in S}{d_n(x,o)}\leq \max_{x\in S}{d_\infty(x,o)}+\ep$. Then for any $x\in \G$ we have
   \[d_n(x,o)\leq (\max_{x\in S}{d_\infty(x,o)}+\ep)d_S(x,o),\]
 and hence $h_S \leq \max_{x\in S}{d_\infty(x,o)}+\ep$ since the exponential growth rate of $d_n$ is 1. The observation then follows by letting $\ep$ tend to zero.

 Now we start the proof of the lemma and suppose for the sake of a contradiction that $S_\infty=\{x\in \G : d_\infty(x,o)\leq R\}$ is infinite for some $R>0$. Up to increasing $R$ we can assume that $S_\infty$ generates $\G$. We take an infinite nested sequence $S_1\subset S_2\subset \dots$ of finite and symmetric generating subsets of $\G$ contained in $S_\infty$ and such that $\# S_i$ tends to infinity. By our previous observation we have $h_{S_i}\leq R$ for all $i$. On the other hand, by \cite[Thm.~1]{AL} there exists $\al>0$ such that $h_{S_i}\geq \log (\al (\# S_i))$ for all $i$, which is our desired contradiction.
\end{proof}

\subsection{Proof of the main result} Similarly as in the case of pseudo metrics in $\calD_\G$, for hyperbolic distance-like functions $\psi,\psi_*$ on $\G$ we can define the stable translation length function
\begin{equation*}
    \ell_\psi[x]:=\lim_{n\to \infty}{\frac{1}{n}\psi(o,x^n)} \hspace{2mm} \text{for } x\in \G,
\end{equation*}
and the dilation of $\psi$ and $\psi_*$:
\begin{equation*}
    \Dil(\psi,\psi_*):=\inf\{\lam>0 \colon \ell_\psi[x]\leq \lam \ell_{\psi_*}[x] \text{ for all } [x]\in \conj \},
\end{equation*}
where we define $\Dil(\psi,\psi_*)=\infty$ if no such $\lam$ exists. Our main result states that for hyperbolic distance-like functions, the dilation is the optimal quasi-isometry constant.
\begin{theorem}\label{thm.Dildistancelike}
    Let $\G$ be a non-elementary hyperbolic group and let $\psi,\psi_*$ be hyperbolic distance-like functions on $\G$ such that $\Dil(\psi_*,\psi)<\infty$. Then there exists some $C\geq 0$ such that for any $x,y\in \G$
    \begin{equation*}
        \psi_*(x,y)\leq \Dil(\psi_*,\psi)\psi(x,y)+C.
    \end{equation*}
\end{theorem}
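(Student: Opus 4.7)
The plan is to combine the strongly Markov structure for $\G$ with an ``axis'' argument coming from property~(4) of hyperbolic distance-like functions, reducing the inequality to the hypothesis on stable translation lengths.

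By $\G$-invariance, it suffices to produce $C\ge 0$ such that $\psi_*(o,z)\le L\,\psi(o,z)+C$ for every $z\in \G$, where $L:=\Dil(\psi_*,\psi)$. Fix a strongly Markov structure $(\mathcal{G},\pi,S)$ for $\G$ and a maximal component $\Cc$. By Proposition~\ref{prop.maxreccomp} every $z$ decomposes as $z=b_1wb_2$ with $b_1,b_2$ in a finite set $B$ and $w\in\G_\Cc$; one application of the triangle inequality and $\G$-invariance absorbs the $b_i$'s into an additive constant, so it suffices to prove the inequality for $w\in\G_\Cc$. Given such $w$, let $\gamma$ be a path of length $n$ in $\Cc$ labelling $w$, and close it up to a loop $\gamma\gamma'$ at its initial vertex $v_0$ by appending a short returning subpath $\gamma'$ of length $r\le K$ (which exists because $\Cc$ is a strongly connected subgraph of the finite graph $\mathcal{G}$). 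Write $u:=\pi(\gamma')$ and $v:=wu$, and let $q$ be a path from $\ast$ to $v_0$ of length at most $K$, labelling an element $s$ with $|s|_S\le K$.

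The key observation is that $q\cdot(\gamma\gamma')^k$ is a valid path from $\ast$ in $\mathcal{G}$ labelling $sv^k$, hence by the bijection between paths from $\ast$ and elements of $\G$ it is the Cannon geodesic for $sv^k$. In particular the points $s,sv,\ldots,sv^k$ all lie on a common $d_S$-geodesic, and a comparison of lengths shows $\ell_{d_S}[v]=n+r$, so $v$ is non-torsion as soon as $n+r>0$. For any indices $0\le j_1\le j\le j_2\le k$ the three points $sv^{j_1},sv^j,sv^{j_2}$ lie on this geodesic, giving the $d_S$-Gromov product $(sv^{j_1}\mid sv^{j_2})_{sv^j,\,d_S}=0$. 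Applying property~(4) of hyperbolic distance-like functions with $d_0=d_S$ yields constants $D,D_*\ge 0$ (independent of $k$ and $w$) with
\[
\psi(sv^{j_1},sv^{j_2})\ \ge\ \psi(sv^{j_1},sv^j)+\psi(sv^j,sv^{j_2})-2D,
\]
and the analogous inequality for $\psi_*$ with $D_*$ in place of $D$.

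Using $\G$-invariance to rewrite $\psi(sv^i,sv^j)=\psi(o,v^{j-i})$ and iterating the display across $j=1,2,\dots,k-1$ produces $\psi(o,v^k)\ge k\,\psi(o,v)-2(k-1)D$; dividing by $k$ and letting $k\to\infty$ gives $\ell_\psi[v]\ge \psi(o,v)-2D$, and likewise $\ell_{\psi_*}[v]\ge \psi_*(o,v)-2D_*$. Combining these ``axis'' bounds with the hypothesis $\ell_{\psi_*}[v]\le L\,\ell_\psi[v]$ and the trivial bound $\ell_\psi[v]\le \psi(o,v)$,
\[
\psi_*(o,v)\ \le\ \ell_{\psi_*}[v]+2D_*\ \le\ L\,\ell_\psi[v]+2D_*\ \le\ L\,\psi(o,v)+2D_*.
\]
Since $v=wu$ and $|u|_S\le K$, two applications of the triangle inequality transfer this to a bound $\psi_*(o,w)\le L\,\psi(o,w)+C'$ at the cost of a constant depending on $K$; the reduction $z=b_1wb_2$ then gives the desired inequality for general $z$.

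The main obstacle I anticipate is the loop-closure step: guaranteeing that the initial vertex $v_0$ of $\gamma$ is reachable from $\ast$ by a path of uniformly bounded length, and that the concatenation $q\cdot(\gamma\gamma')^k$ is literally the Cannon geodesic for $sv^k$. These are features of the strongly Markov structure that follow from the growth quasi-tightness underlying Proposition~\ref{prop.maxreccomp}, but they deserve careful bookkeeping in the proof.
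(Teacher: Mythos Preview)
Your proof is correct and follows essentially the same strategy as the paper's. Both arguments use the strongly Markov structure to replace an arbitrary element by a nearby ``axial'' one whose powers are represented by geodesic words (by closing a path in a maximal component $\Cc$ into a loop), then invoke property~(4) to convert vanishing $d_S$-Gromov products along that axis into the bound $\psi_*(o,v)\le \ell_{\psi_*}[v]+2D_*$, and finally apply the dilation hypothesis together with the trivial bound $\ell_\psi[v]\le \psi(o,v)$.

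The paper organizes the same computation slightly differently: rather than carrying the prefix $s$, it sets $\gamma_x:=s_1(rw)s_1^{-1}$ (a conjugate of the loop element) so that the relevant Gromov products $(\gamma_x^{-m}\mid\gamma_x^n)_{o,d_S}$ are based at the identity, and then isolates the intermediate statement $\psi(x,y)\le \max_{u\in B}\ell_\psi[x^{-1}yu]+C_1$ as a standalone corollary. Your version keeps the basepoint at $sv^j$ along the Cannon geodesic; this is a cosmetic difference only.

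Regarding your stated obstacle: the reachability of $v_0$ from $\ast$ by a path of bounded length needs nothing more than finiteness of the Cannon graph (every vertex is reachable from $\ast$ by construction, and there are finitely many vertices); growth quasi-tightness is used only for $B\G_\Cc B=\G$. And once $q\cdot(\gamma\gamma')^k$ is a path from $\ast$, the bijection between such paths and group elements forces it to be \emph{the} Cannon geodesic for $sv^k$, so no further bookkeeping is needed.
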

From this result, we immediately deduce Theorem \ref{thm.dilatta} from the introduction.
\begin{proof}[Proof of Theorem \ref{thm.dilatta}]
    Let $d,d_*\in \calD_\G$, so that they are hyperbolic distance-like functions satisfying $\Dil(d,d_*), \Dil(d_*,d)<\infty$. Then by Theorem \ref{thm.Dildistancelike} there is some $A\geq 0$ such that 
    \begin{equation*}
        \Dil(d,d_*)^{-1}d(x,y)-A\leq d_*(x,y)\leq \Dil(d_*,d)d(x,y)+A
    \end{equation*}
    for all $x,y\in \G$. The conclusion then follows from Proposition \ref{prop.qiGromovprod}.
\end{proof}

For the proof of Theorem \ref{thm.Dildistancelike} we fix a finite, symmetric generating set $S\subset \G$ with word metric $d_S$, and let $\ell_S$ denote the stable translation length for this metric. Similarly, $(\cdot|\cdot)_S$ denotes the Gromov product for $d_S$ based at the identity, and $|\cdot|_S$ denotes the word length with respect to $S$. Let $\del$ be a hyperbolicity constant for $d_S$, which we assume is positive.
We start with some lemmas.

\begin{lemma}\label{lem.isomloxo} Let $g\in \G$ and $D>0$ satisfy 
\begin{equation}\label{eq.goodposition}
    (g^{-1}|g)_S<D \ \text{ and } \ |g|_S>2D+4\del.
\end{equation} Then
\[(g^{-m}|g^n)_S<D+2\del \hspace{2mm} \text{ for all } m,n\geq 1. \]
\end{lemma}

\begin{proof}
    Let $a:=|g^2|_S-|g|_S-2\del$. The inequality $(g^{-1}|g)_S<D$ is equivalent to $2|g|_S-|g^2|_S<2D$, so that $a>|g|_S-2\del-2D>2\del>0$ by \eqref{eq.goodposition}. From the proof of \cite[Thm.~1.1]{oregon-reyes.ineq} and the fact that $\del$ is positive we deduce that $a+|g^n|_S\leq |g^{n+1}|_S$ for all $n\geq 1$. In particular, we have 
    \[(g^{-m}|g^{-1})_S=(|g^m|_S+|g|_S-|g^{m-1}|_S)/2\geq (|g|_S+a)/2>|g|_S/2>D+2\del\] for all $m>1$, and similarly $(g |g^{n})_S>D+2\del$
    for all $n>1$. But by $\del$-hyperbolicity and \eqref{eq.goodposition} we have
    \[D+2\del>(g^{-1}|g)_S+2\del \geq \min\{(g^{-1}|g^{-m})_S,(g^{-m}|g^n)_S,(g^n|g)_S\}\]
    which implies $(g^{-m}|g^n)_S<D+2\del$ for all $m,n\geq 1$, as desired.
\end{proof}

\begin{lemma}\label{lem.closetogeodword}
There exist constants $C',R'\geq 0$ such that for any $x\in \G$ there is some $\gam_x\in \G$ such that $d_S(x,\gam_x)\leq R'$ and
        $$(\gam_x^{-m}|\gam_x^{n})_{S}\leq C' \hspace{2mm} \text{ for all } m,n\geq 0.$$
\end{lemma}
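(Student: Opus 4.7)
The plan is to build $\gamma_x$ as a conjugate (by a bounded element) of a \emph{loop element} coming from a maximal component of a strongly Markov structure for $\G$. The payoff is that such loop elements have powers whose $S$-word length grows exactly linearly up to a universal additive error. This passes directly to the desired Gromov product bound via the identity
\[
(\gamma_x^{-m}\mid \gamma_x^n)_S=\tfrac12\bigl(|\gamma_x^m|_S+|\gamma_x^n|_S-|\gamma_x^{m+n}|_S\bigr),
\]
since any uniform upper control $|\gamma_x^n|_S-n\ell_S[\gamma_x]\leq K$ immediately yields $(\gamma_x^{-m}|\gamma_x^n)_S\leq K$.

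First, I would fix a strongly Markov structure $(\Gc,\pi,S)$ for $\G$ with a maximal component $\Cc$, and invoke Proposition \ref{prop.maxreccomp} to obtain a finite set $B\subset\G$ with $B\G_\Cc B=\G$. I would then set the universal constants $B_{\max}:=\max_{b\in B}|b|_S$, $D:=\diam(\Cc)$ (finite since $\Cc$ is a finite strongly connected subgraph), and $L_{\max}:=\max_{v\in\Cc} L_v$, where $L_v$ is the length of a preselected path $p_v$ from $\ast$ to $v$ in $\Gc$.

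Given $x\in\G$, I would decompose $x=b_1\eta b_2$ with $b_i\in B$ and $\eta\in\G_\Cc$, realizing $\eta$ as a path $p_\eta$ in $\Cc$ from $v_0$ to $v_1$. Strong connectivity of $\Cc$ lets me pick a path $q$ in $\Cc$ from $v_1$ back to $v_0$ of length at most $D$, chosen so that $p_\eta\cdot q$ has positive length (if $\eta=e$, I would take $q$ to be any fixed non-trivial cycle through $v_0$ in $\Cc$). The element $\gamma':=\pi(p_\eta q)=\eta\cdot\pi(q)$ is then a loop element at $v_0$, and I set $\gamma_x:=b_1\gamma' b_1^{-1}$. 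Checking $d_S(x,\gamma_x)\leq R'$ reduces after cancellation to bounding $|b_2^{-1}\pi(q)b_1^{-1}|_S\leq 2B_{\max}+D$, so $R':=2B_{\max}+D$ works.

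The crux is the uniform estimate $0\leq |\gamma'^n|_S-n\ell_S[\gamma']\leq 2L_{\max}$. I would derive this from the Cannon-graph bijection between finite paths starting at $\ast$ and elements of $\G$, which preserves length: $p_{v_0}\cdot(p_\eta q)^n$ is a legal path from $\ast$ of length $L_{v_0}+n|p_\eta q|$ representing $\alpha_{v_0}\gamma'^n$ with $\alpha_{v_0}:=\pi(p_{v_0})$, so $|\alpha_{v_0}\gamma'^n|_S=L_{v_0}+n|p_\eta q|$ \emph{exactly}. Two applications of the triangle inequality then sandwich $|\gamma'^n|_S$ between $n|p_\eta q|$ and $n|p_\eta q|+2L_{v_0}$, yielding $\ell_S[\gamma']=|p_\eta q|$ and the claimed bound. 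Passing to $\gamma_x^n=b_1\gamma'^n b_1^{-1}$ costs at most $2B_{\max}$ extra error, giving the uniform control on $|\gamma_x^n|_S-n\ell_S[\gamma_x]$ that I plug into the Gromov product identity to finish.

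The main obstacle I anticipate is guaranteeing that every constant is uniform in $x$: all of $B_{\max}, D, L_{\max}$ come from finite data attached to the Cannon graph, so this is automatic provided one is careful in the edge cases (notably $\eta=e$, where $x$ lies in the finite set $B\cdot B$) to arrange that $p_\eta q$ is non-empty so that $\gamma'$ has infinite order and the asymptotic $\ell_S[\gamma']=|p_\eta q|$ is meaningful.
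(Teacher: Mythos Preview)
Your proposal is correct and follows essentially the same route as the paper: decompose $x$ via $B\G_\calC B=\G$, close up the $\calC$-path to a loop, and conjugate by the left $B$-factor to obtain $\gamma_x$. The only cosmetic difference is that the paper observes directly that \emph{any} path in the Cannon graph (as a subword of a path from $\ast$) labels a geodesic word, so $|(rw)^n|_S=n|p_\eta q|$ exactly and $C'=3B_{\max}$; your version, prefixing by $p_{v_0}$ and using the triangle inequality, is equally valid but carries the extra $2L_{\max}$ in the constant.
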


\begin{proof}
Let $u,v\in \G$ be a \emph{ping pong pair} in the sense of \cite[Sec.~2.2.4]{DGLM}. The proof of \cite[Lem.~2.2.6]{DGLM} actually shows the following: if $x\in \G$ satisfies $(x^{-1}|x)_S\geq \max\{|u|_S,|v|_S\}+30\del$, then there exists $s\in \{u,v\}$ such that 
\begin{equation}\label{eq.xs}
    ((xs)^{-1}|xs)_S\leq \frac{1}{2}\max\{|u|_S,|v|_S\} -7\del.
\end{equation}
Now we consider three cases. If $(x^{-1}|x)_S<\max\{|u|_S,|v|_S\}+30\del$ and $|x|_S>2\max\{|u|_S,|v|_S\}+64\del$, then Lemma \ref{lem.isomloxo} applied to $g=x$, and $D=\max\{|u|_S,|v|_S\}+30\del$ gives us 
\[(x^{-m}|x^n)_S<\max\{|u|_S,|v|_S\}+32\del\]
for all $m,n\geq 0$, and we choose $\gam_x=x$. 

Next, if $(x^{-1}|x)_S\geq \max\{|u|_S,|v|_S\}+30\del$ and $|x|_S>2\max\{|u|_S,|v|_S\}+64\del$, let $s\in \{u,v\}$ satisfy \eqref{eq.xs}. 
Then $|xs|_S\geq |x|_S-\max\{|u|_S,|v|_S\}>\max\{|u|_S,|v|_S\}-10\del$, and Lemma \ref{lem.isomloxo} applied to $g=xs$ and $D=\max\{|u|_S,|v|_S\}/2-7\del$ gives us
\[((xs)^{-m}|(xs)^n)_S<\frac{1}{2}\max\{|u|_S,|v|_S\}-5\del\]
for all $m,n\geq 0$. In this case we choose $\gam_x=xs$. 

Finally, if $|x|_S\leq 2\max\{|u|_S,|v|_S\}+64\del$ then we choose $\gam_x=o$. In conclusion, the lemma follows with $R'=2C'=2\max\{|u|_S,|v|_S\}+64\del$.
\end{proof}

\begin{lemma}\label{lem.gam_x}
    Let $\gam:\G\ra \G$, $x \mapsto \gam_x$ be the assignment from Lemma \ref{lem.closetogeodword}. Then for any hyperbolic distance-like function $\psi$ on $\G$ there exist $C_0, R_0\geq 0$ such that for any $x\in \G$ we have $\max(\psi(x,\gam_x), \psi(\gam_x,x))\leq R_0$ and 
    $$(\gam_x^{-m}|\gam_x^{n})_{o,\psi}\leq C_0$$
    for all $m,n\geq 0$. In particular we have $\psi(o,\gam_x)\leq \ell_\psi[\gam_x]+2C_0$ for all $x\in \G$.
\end{lemma}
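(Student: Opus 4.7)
The plan is to transfer, in two stages, the word-metric conclusions of Lemma \ref{lem.closetogeodword} to $\psi$-statements using properties (3) and (4) of a hyperbolic distance-like function, and then extract the final inequality from a near-super\-additivity argument on $n \mapsto \psi(o,\gam_x^n)$. For the bound $\max(\psi(x,\gam_x),\psi(\gam_x,x)) \leq R_0$, write $\gam_x = x u_x$ with $|u_x|_S \leq R'$, where $R'$ is as in Lemma \ref{lem.closetogeodword}. The $d_S$-ball of radius $R'$ about $o$ is finite (as $\G$ is finitely generated), so
\[
R_0 := \max\{\psi(o,u),\,\psi(u,o) : |u|_S \leq R'\}
\]
is finite, and $\G$-invariance (property (3)) immediately gives $\psi(x,\gam_x) = \psi(o,u_x) \leq R_0$, and by symmetry $\psi(\gam_x,x) \leq R_0$.

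For the uniform bound on $(\gam_x^{-m}|\gam_x^n)_{o,\psi}$, I would invoke property (4) directly. Since $d_S \in \calD_\G$ and Lemma \ref{lem.closetogeodword} supplies the uniform bound $(\gam_x^{-m}|\gam_x^n)_{o,d_S} \leq C'$ in $x, m, n$, applying property (4) with $d_0 = d_S$ and $C = C'$ produces a constant $C_0 \geq 0$, independent of $x$, $m$, $n$, such that $(\gam_x^{-m}|\gam_x^n)_{o,\psi} \leq C_0$ for all $x \in \G$ and all $m,n \geq 0$. This is the only step genuinely using the ``hyperbolic'' character of $\psi$; everything else is formal.

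Finally, to derive $\psi(o,\gam_x) \leq \ell_\psi[\gam_x] + 2C_0$, set $a_n := \psi(o,\gam_x^n)$. Expanding $(\gam_x^{-m}|\gam_x^n)_{o,\psi}$ and using $\G$-invariance to rewrite $\psi(\gam_x^{-m},\gam_x^n) = \psi(o,\gam_x^{m+n})$, the Gromov-product bound becomes the near-super\-additivity $a_{m+n} \geq a_m + a_n - 2C_0$, while the triangle inequality (property (2)) and $\G$-invariance give the standard subadditivity $a_{m+n} \leq a_m + a_n$, so $\ell_\psi[\gam_x] = \inf_n a_n/n$ by Fekete's lemma. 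Iterating the near-super\-additivity yields $a_{kn} \geq k a_n - 2(k-1)C_0$ for every $k \geq 1$; dividing by $k$ and sending $k \to \infty$ gives $a_n \leq n\ell_\psi[\gam_x] + 2C_0$, and $n = 1$ is the claim. I do not expect a genuine obstacle here: property (4) is tailored precisely for the transfer of the second step, and the only mild care required is in keeping the directions of sub- and super-additivity straight in the final step.
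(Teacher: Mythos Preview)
Your proof is correct and follows essentially the same route as the paper: both transfer the word-metric bounds from Lemma~\ref{lem.closetogeodword} to $\psi$ via property~(4), and both extract the final inequality from the near-superadditivity $a_{m+n}\geq a_m+a_n-2C_0$ by averaging and passing to the limit. The only cosmetic difference is that you bound $R_0$ using finiteness of the $d_S$-ball of radius $R'$, whereas the paper obtains it from property~(4) applied to $(x^{-1}\gam_x\,|\,x^{-1}\gam_x)_{o,\psi}$; both are equally valid.
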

As before, we are using $o\in\G$ to denote the identity element.
\begin{proof}
    Let $d_S\in \calD_\G$ be a word metric with constants $C',R'$ given by Lemma \ref{lem.closetogeodword}.  Then for any $x\in \G$ we have $d_S(x,\gam_x)\leq R'$ and $(\gam_x^{-m}|\gam_x^{n})_{S}\leq C'$ for all $m,n\geq 0$.
    If $\psi$ is any hyperbolic distance-like function, let $C_0\geq 0$ (resp. $R_0\geq 0$) be such that $(p|q)_{S}\leq C'$ (resp. $(p|q)_{S}\leq R'$) implies $(p|q)_{o,\psi}\leq C_0$ (resp. $(p|q)_{o,\psi}\leq R_0/2$) for all $p,q\in \G$. Therefore, for all $x\in \G$ we have 
    $\max(\psi(x,\gam_x),\psi(\gam_x,x))\leq 2(x^{-1}\gam_x|x^{-1}\gam_x)_{o,\psi}\leq R_0$
    and \begin{equation}\label{eq.psiadditive}\psi(o,\gam_x^{m})+\psi(o,\gam_x^{n})\leq \psi(o,\gam_x^{m+n})+2C_0\end{equation}
    for all $m,n\geq 0$, which proves the first assertion of the proposition. For the second assertion, we apply \eqref{eq.psiadditive} to $m=1$ and obtain
    $\psi(o,\gam_x)\leq \psi(o,\gam_x^{n+1})-\psi(o,\gam_x^{n})+2C_0$ for all $n$. By adding these inequalities for $0\leq n\leq k$ we get
    $$(k+1)\psi(o,\gam_x)\leq \psi(o,\gam_x^{k+1})+2(k+1)C_0.$$
    The proof concludes after dividing by $(k+1)$ and letting $k$ tend to infinity. 
\end{proof}

\begin{corollary}\label{coro.ssdist<mls}
    There exists a finite set $B\subset \G$ such that the following holds. Given $\psi$ a hyperbolic distance-like function on $\G$, there exists $C_1\geq 0$ such that for any $x,y\in \G$ we have
    \begin{equation}\label{eq.dist<mls}
        \psi(x,y)\leq \max_{u\in B}{\ell_\psi[x^{-1}yu]}+C_1.
    \end{equation}
\end{corollary}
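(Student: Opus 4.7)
The plan is to derive Corollary \ref{coro.ssdist<mls} as a fairly direct consequence of Lemma \ref{lem.gam_x}, together with a careful unpacking of the construction inside Lemma \ref{lem.closetogeodword} to ensure that the finite set $B$ is independent of $\psi$. By the $\G$-invariance of $\psi$, setting $z:=x^{-1}y$ we have $\psi(x,y)=\psi(o,z)$, so it is enough to produce a finite set $B\subset\G$ (depending only on a fixed strongly Markov structure for $\G$) and a constant $C_1=C_1(\psi)$ such that
\begin{equation*}
\psi(o,z)\leq \max_{u\in B}\ell_\psi[zu]+C_1 \quad \text{for every } z\in\G.
\end{equation*}

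First I would apply Lemma \ref{lem.gam_x} to $\psi$, producing constants $C_0,R_0\geq 0$ and, for each $z\in\G$, an element $\gam_z\in\G$ with $\psi(z,\gam_z)\leq R_0$ and $\psi(o,\gam_z)\leq \ell_\psi[\gam_z]+2C_0$. Then the triangle inequality immediately yields
\begin{equation*}
\psi(o,z)\leq \psi(o,\gam_z)+\psi(\gam_z,z)\leq \ell_\psi[\gam_z]+2C_0+R_0.
\end{equation*}
It remains to show that $\gam_z$ has the form $zu$ for some $u$ lying in a fixed finite subset $B\subset\G$ that does not depend on $\psi$.

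For that I would revisit the proof of Lemma \ref{lem.closetogeodword}: it writes $z=s_1 r s_2$ with $s_1,s_2$ in the finite set $B_0$ satisfying $B_0\G_\calC B_0=\G$, and sets $\gam_z=s_1 r w s_1^{-1}$ where $w$ labels a path of length at most $N$ in the maximal recurrent component $\calC$. A simple algebraic manipulation gives $\gam_z=z\cdot(s_2^{-1}w s_1^{-1})$. Because $B_0$ is finite and there are only finitely many labels of paths of length at most $N$ in $\calC$, the element $u:=s_2^{-1}ws_1^{-1}$ ranges in a finite set $B\subset\G$ depending only on the Markov structure. Since $\gam_z=zu$, its conjugacy class coincides with that of $zu$, so $\ell_\psi[\gam_z]=\ell_\psi[zu]$.

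Combining the two displays gives
\begin{equation*}
\psi(o,z)\leq \ell_\psi[zu]+2C_0+R_0\leq \max_{v\in B}\ell_\psi[zv]+C_1
\end{equation*}
with $C_1:=2C_0+R_0$, which proves the corollary. The only delicate point is the universality of $B$: once it is recognized that $B$ is determined purely by the automatic structure (through $B_0$, $\calC$, and $N$) and not by $\psi$, the rest is a short triangle-inequality argument. I do not expect any serious obstacle, since all of the technical work has already been done in Lemmas \ref{lem.closetogeodword} and \ref{lem.gam_x}.
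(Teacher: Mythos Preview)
Your argument is correct and follows essentially the same route as the paper: reduce to $z=x^{-1}y$, apply Lemma~\ref{lem.gam_x} to get $\psi(o,z)\le \ell_\psi[\gam_z]+2C_0+R_0$, and observe that $\gam_z=zu$ with $u$ ranging over a finite set independent of $\psi$. The only cosmetic difference is that the paper defines $B$ more simply as the $d_S$-ball of radius $R'$ (using $d_S(z,\gam_z)\le R'$ from Lemma~\ref{lem.closetogeodword}) rather than unpacking the explicit form $u=s_2^{-1}ws_1^{-1}$; both choices clearly work.
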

\begin{remark}
    The inequality \eqref{eq.dist<mls} considerably refines \cite[Prop.~3.1]{oregon-reyes.ms}, where the maximum on the left hand side was multiplied by $(1-\ep)$ for arbitrary $\ep \in (0,1)$, but the additive error on the right hand side depended on $\ep$.
\end{remark}
\begin{proof}
Let $R'\geq 0$ and $x\mapsto\gam_x$ be as in Lemma \ref{lem.closetogeodword}. Suppose they are induced by the generating set $S\subset \G$, and set $B:=\{u\in \G \colon d_S(o,u)\leq R'\}$. If $\psi$ is a hyperbolic distance-like function on $\G$ and $C_0,R_0$ are the constants found in Lemma \ref{lem.gam_x}, we set $C_1:=2C_0+R_0$. Since for all $x,y\in \G$ we have $y^{-1}x\gam_{x^{-1}y}\in B$, we deduce
$$\psi(x,y)=\psi(o,x^{-1}y)\leq \psi(o,\gam_{x^{-1}y})+R_0 \leq \ell_\psi[\gam_{x^{-1}y}]+2C_0+R_0\leq \max_{u\in B}{\ell_\psi[x^{-1}yu]}+C_1. \qedhere$$
\end{proof}

\begin{proof}[Proof of Theorem \ref{thm.Dildistancelike}]
    Let $B\subset \G$ be the finite set given by Corollary \ref{coro.ssdist<mls}, and let $C_1$ be the corresponding constant for $\psi_*$. Then for any $x,y \in \G$ we get
    \begin{equation*}
    \begin{aligned}
          \psi_*(x,y)& \leq \max_{u\in B}{\ell_{\psi_*}[x^{-1}yu]}+C_1 \\
         & \leq  \Dil(\psi_*,\psi)\cdot \max_{u\in B}{\ell_{\psi}[x^{-1}yu]}+C_1 \\
        & \leq \Dil(\psi_*,\psi)\cdot \psi(x,y)+\left(C_1+\Dil(\psi_*,\psi)\cdot \max_{u\in B}{\psi(o,u)}\right), 
    \end{aligned}
    \end{equation*}
    This concludes the proof since the last term on the right-hand side is finite and  independent of $x,y$.
\end{proof}

\subsection{Anosov representations}\label{sec.Anosov}
At the beginning of the section, we introduced hyperbolic distance-like functions. Our reason for working at this level of generality was so that we could apply our methods to distance-like functions that are not necessarily pseudo metrics in $\Dc_\G$. In this section, we briefly present some applications to Anosov representations.

Let $\G$ be a finitely generated group equipped with a generating set $S$. A representation $\rho: \G \to \SL_m(\R)$ is said to be $j$\emph{-dominated} for $j \in \{1,\ldots, m-1\}$ if there exist constants $C, \mu >0$ such that
\[
\frac{\sigma_j(\rho(x))}{\sigma_{j+1}(\rho(x))} \ge Ce^{\mu |x|_S} \ \ \text{ for all $x\in\G.$}
\]
Here, for $A \in \SL_m(\R)$, $\sigma_1(A) \ge \sigma_2(A)\ge \ldots \ge \sigma_m(A)$ represent the singular values of $A$. This condition was studied by Bochi, Potrie and Sambarino in \cite{bps} in which it was shown that being $1$-dominated is equivalent to being \emph{projective Anosov} as defined by Labourie in \cite{labourie} and extended to all groups in \cite{gw}.
Further it is known that for a group to admit a $1$-dominated representation, it must be hyperbolic \cite[Thm.~3.2]{bps}. As we continue we will stop using the term $1$-dominated representations and will instead use projective Anosov.

The next lemma follows immediately from \cite[Lem.~7.1]{cantrell.tanaka.2}.

\begin{lemma}\label{lem.An-hdlf}
If $\rho:\G \ra \SL_m(\R)$ is a projective Anosov representation, then the function $$\psi_\rho(x,y):=\log \|\rho(x^{-1}y)\|$$ defines a hyperbolic distance-like function on $\G$, quasi-isometric to any pseudometric belonging to $\calD_\G$.
\end{lemma}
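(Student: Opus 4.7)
The plan is to verify the four defining conditions of a hyperbolic distance-like function for $\psi_\rho$, together with the quasi-isometry claim. The first three conditions are essentially formal. For positivity, any $A\in\SL_m(\R)$ satisfies $\sigma_1(A)\cdots\sigma_m(A)=1$ with decreasing singular values, so $\|A\|=\sigma_1(A)\geq 1$, giving $\psi_\rho\geq 0$; since $\rho(e)=I$, we also have $\psi_\rho(x,x)=0$. The triangle inequality follows from submultiplicativity of the operator norm, $\|\rho(x^{-1}z)\|=\|\rho(x^{-1}y)\rho(y^{-1}z)\|\leq \|\rho(x^{-1}y)\|\,\|\rho(y^{-1}z)\|$, and $\G$-invariance is immediate from $(sx)^{-1}(sy)=x^{-1}y$.

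The quasi-isometry of $\psi_\rho$ with any pseudo metric in $\calD_\G$, together with the Gromov product condition (4), are both the content of \cite[Lem.~7.1]{cantrell.tanaka.2}. The key input is the projective Anosov hypothesis, equivalent by \cite{bps} to a $1$-dominated splitting condition. This forces the top logarithm of singular values of $\rho(x)$ to grow linearly in $|x|_S$, producing a two-sided estimate
$$\lam^{-1}|x^{-1}y|_S - C \leq \psi_\rho(x,y) \leq \lam|x^{-1}y|_S + C$$
for some constants $\lam \geq 1$ and $C \geq 0$; since every pseudo metric in $\calD_\G$ is quasi-isometric to a word metric, this settles the quasi-isometry assertion. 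Moreover, the same dominated-splitting structure provides transverse continuous limit maps to projective space and controls the behavior of Cartan projections along quasi-geodesics; this translates into the statement that whenever $(x|y)_{w,d_0}$ is uniformly small, the corresponding Gromov product $(x|y)_{w,\psi_\rho}$ stays uniformly bounded.

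The main obstacle is condition (4). A generic quasi-isometry is too weak to deliver it, because the Gromov product is a cancellation of distances and the additive error term can destroy one-sided bounds. Instead one must genuinely use the geometry of projective Anosov representations --- either through the transverse limit maps, or through a comparison with a roughly geodesic Finsler distance on the symmetric space of $\SL_m(\R)$ along which $\rho$ is a quasi-isometric embedding --- to preclude triples that look degenerate in $d_0$ but not in $\psi_\rho$. This is precisely the content of the cited Cantrell--Tanaka lemma, and is what makes the statement non-trivial beyond the mere quasi-isometry claim.
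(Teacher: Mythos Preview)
Your proposal is correct and takes essentially the same approach as the paper: both arguments defer the non-trivial content (the quasi-isometry with word metrics and the Gromov product condition (4)) to \cite[Lem.~7.1]{cantrell.tanaka.2}. The paper simply states that the lemma ``follows immediately'' from that reference, while you additionally spell out the elementary verifications of conditions (1)--(3) and provide some context on why condition (4) is the substantive part; this extra detail is accurate and helpful, but the core argument is the same citation.
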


This lemma combined with Theorem \ref{thm.Dildistancelike} implies the following results for projective Anosov representations.
\begin{proposition} \label{prop.AR}
    Suppose that $\rho$ and $\rho_*$ are two projective Anosov representations (not necessarily of the same dimension). Then there exists $C >0$ such that for every $x\in\G$
    \[
    \Dil(\psi_\rho,\psi_{\rho_*})^{-1} \log\|\rho(x)\| - C \le \log\|\rho_*(x)\| \le \Dil(\psi_{\rho_*},\psi_\rho) \log\|\rho(x)\| + C.
    \]
\end{proposition}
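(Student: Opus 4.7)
The plan is to deduce the proposition directly from Theorem \ref{thm.Dildistancelike} via Lemma \ref{lem.An-hdlf}. Lemma \ref{lem.An-hdlf} tells us that both $\psi_\rho$ and $\psi_{\rho_*}$ are hyperbolic distance-like functions on $\G$, each quasi-isometric to any fixed pseudo metric in $\calD_\G$. Chaining these two quasi-isometries, I would obtain constants $\lam \ge 1$ and $c \ge 0$ such that $\lam^{-1}\psi_\rho(u,v) - c \le \psi_{\rho_*}(u,v) \le \lam\psi_\rho(u,v) + c$ for all $u,v \in \G$.

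Before applying Theorem \ref{thm.Dildistancelike}, I must verify its finiteness hypothesis, namely that both $\Dil(\psi_{\rho_*}, \psi_\rho)$ and $\Dil(\psi_\rho, \psi_{\rho_*})$ are finite. Evaluating the displayed quasi-isometry bound at the pair $(o, x^n)$, dividing by $n$, and sending $n \to \infty$ gives $\lam^{-1}\ell_{\psi_\rho}[x] \le \ell_{\psi_{\rho_*}}[x] \le \lam \ell_{\psi_\rho}[x]$ for every non-torsion conjugacy class $[x]$, so both dilations lie in $[\lam^{-1}, \lam]$.

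With this in hand, Theorem \ref{thm.Dildistancelike} applied in both directions will produce a single constant $C_0 \ge 0$ with $\psi_{\rho_*}(u,v) \le \Dil(\psi_{\rho_*}, \psi_\rho)\psi_\rho(u,v) + C_0$ and the symmetric inequality obtained by interchanging the roles of $\psi_\rho$ and $\psi_{\rho_*}$. Specializing at $(u,v) = (o, x)$ and using the identity $\psi_\rho(o,x) = \log\|\rho(x)\|$ (and the analogous one for $\rho_*$), the first inequality is exactly the required upper bound, while the second, after dividing by $\Dil(\psi_\rho, \psi_{\rho_*})$ and rearranging, yields the lower bound; absorbing both additive errors into a single constant $C$ finishes the proof. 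I anticipate no genuine obstacle: essentially all the content is packaged into Theorem \ref{thm.Dildistancelike}, and Lemma \ref{lem.An-hdlf} is precisely the bridge that makes that theorem applicable to Anosov representations.
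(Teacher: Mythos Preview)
Your proposal is correct and matches the paper's own argument: the paper simply states that Lemma~\ref{lem.An-hdlf} combined with Theorem~\ref{thm.Dildistancelike} yields Proposition~\ref{prop.AR}, and you have spelled out exactly that deduction, including the check that both dilations are finite (via the quasi-isometry to a pseudo metric in $\calD_\G$) and the specialization at $(o,x)$.
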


\begin{proposition}\label{prop.arwm}
Suppose that $\rho: \G \to \SL_m(\R)$ is projective Anosov and $S\subset \G$ is any (not necessarily symmetric) finite generating set. Then there exists a constant $C >0$ such that
\[
\Dil(d_S,\psi_\rho)^{-1}|x|_S - C \le \log\|\rho(x)\|\le \Dil(\psi_\rho,d_S)|x|_S + C
\]
for all $x\in\G$.
\end{proposition}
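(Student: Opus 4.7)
The plan is to apply Theorem~\ref{thm.Dildistancelike} twice, once with $(\psi_*,\psi)=(\psi_\rho,d_S)$ and once with $(\psi_*,\psi)=(d_S,\psi_\rho)$, in order to obtain the upper and lower inequalities after specializing to $(x,y)=(o,x)$. For this strategy to apply, the key input is to verify that the (possibly asymmetric) word length function $d_S$ is itself a hyperbolic distance-like function on $\G$, and that both dilations $\Dil(\psi_\rho,d_S)$ and $\Dil(d_S,\psi_\rho)$ are finite.

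Conditions (1)--(3) of the definition of a hyperbolic distance-like function are immediate for $d_S(x,y)=|x^{-1}y|_S$. To verify condition (4), I would exploit the bi-Lipschitz comparison $d_{S\cup S^{-1}}\le d_S\le \lambda\,d_{S\cup S^{-1}}$, where $\lambda=\max_{s\in S\cup S^{-1}}|s|_S<\infty$. Since $d_{S\cup S^{-1}}\in\calD_\G$ is hyperbolic and geodesic, this comparison transfers rough geodesicity to $d_S$ (shortest $S$-words give directed rough geodesics) as well as hyperbolicity via the four-point condition; an adaptation of Proposition~\ref{prop.qiGromovprod} to the asymmetric setting then lets me conclude that $(x|y)_{w,d_0}\le C$ implies $(x|y)_{w,d_S}\le D$ for a constant $D$ depending only on $C$ and on $d_0\in\calD_\G$. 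The finiteness of $\Dil(\psi_\rho,d_S)$ and $\Dil(d_S,\psi_\rho)$ then follows because both $d_S$ and $\psi_\rho$ (the latter by Lemma~\ref{lem.An-hdlf}) are quasi-isometric to $d_{S\cup S^{-1}}$, so their stable translation length spectra are comparable.

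With these preliminaries in hand, one application of Theorem~\ref{thm.Dildistancelike} with $(\psi_*,\psi)=(\psi_\rho,d_S)$ gives
\[
\log\|\rho(x)\|=\psi_\rho(o,x)\le \Dil(\psi_\rho,d_S)\,d_S(o,x)+C=\Dil(\psi_\rho,d_S)\,|x|_S+C,
\]
which is the upper bound in the statement. Swapping the roles yields $|x|_S\le \Dil(d_S,\psi_\rho)\log\|\rho(x)\|+C'$, which rearranges to the lower bound.

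The hard part will be the verification of condition (4) for $d_S$ in the non-symmetric case. Gromov products and hyperbolicity formally make sense for asymmetric distance-like functions, but Proposition~\ref{prop.qiGromovprod}, the main tool for transferring Gromov product bounds across quasi-isometric metrics, is stated only for symmetric pseudo metrics. One must therefore either extend its proof to the asymmetric setting, or bypass it by working directly with shortest $S$-words and the stability of quasi-geodesics in the hyperbolic Cayley graph of $S\cup S^{-1}$, in order to obtain the desired Gromov product comparison.
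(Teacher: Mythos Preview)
Your proposal is correct and follows the same route as the paper, which simply asserts that Proposition~\ref{prop.arwm} follows from Lemma~\ref{lem.An-hdlf} combined with Theorem~\ref{thm.Dildistancelike}. You have in fact gone further than the paper by explicitly flagging and sketching the verification that the possibly non-symmetric word function $d_S$ is a hyperbolic distance-like function---a point the paper leaves entirely implicit---and your suggested argument via stability of $S$-geodesic words as quasi-geodesics in $\Cay(\G,S\cup S^{-1})$ is the right way to handle condition~(4).
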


\begin{remark}
    It may be possible to prove these results using ideas involving the semi-simplification representation used by Tsouvalas in \cite{tsouvalas}.
\end{remark}
These results show that there is a strong relationship between the eigenvalue and norm maps associated to Anosov representations. This complements the spectral rigidity results of Bridgeman, Canary, Labourie and Sambarino \cite{BCLS} and Cantrell and Tanaka \cite{cantrell.tanaka.2}. In particular it was shown, although not explicitly stated in \cite{cantrell.tanaka.2} that if $\rho, \rho_*$ are projective Anosov representations as above then the following are equivalent:
\begin{enumerate}
    \item there is $\tau>0$ such that $\log\lambda_1(\rho(x)) = \tau\log\lambda_1(\rho_*(x))$ for all $x \in \G$; and,
    \item there exist $\tau, C' >0$ such that $|\log\|\rho(x)\| - \tau\log\|\rho_*(x)\|| \le C'$ for all $x\in\G$.
\end{enumerate}
Further equivalence statements can be added to this list, see \cite{BCLS}. Proposition \ref{prop.AR} above implies this result and further shows that the dilations provide uniformly good upper and lower multiplicative bounds for the comparison between $\log\|\rho(x)\|$ and $\log\|\rho_*(x)\|$.

Our work also has applications to Borel Anosov representations: representations that are fully dominated, i.e. $\rho: \G \to \SL_m(\R)$ is $j$-dominated for every $j=1,\ldots m-1$. Representations coming from higher Teichm\"uller theory (i.e. Hitchin representations) are Borel Anosov. Given a representation $\rho : \G \to \SL_m(\R)$ we will use
$\lambda: \SL_m(\R) \to \R^m$ and $\mu: \SL_m(\R) \to \R^m$ for the Jordan and singular value projections. That is, $\lambda(\rho(x))$ and $\mu(\rho(x))$ are given by
\[
 (\log\lambda_1(\rho(x)), \ldots, \log\lambda_m(\rho(x))) \text{ and } (\log\sigma_1(\rho(x)), \ldots, \log\sigma_m(\rho(x)))
\]
respectively where $\lambda_j : \SL_m(\R) \to \R$ for $j=1,\ldots, m$ map a matrix to the absolute value of its $j$th largest (by modulus) eigenvalue.
A representation is Borel Anosov if and only if each of its exterior power representations is projective Anosov. Therefore, by Lemma \ref{lem.An-hdlf} we deduce that for a Borel Anosov representation $\rho:\G \ra \SL_m(\R)$ each of the functions $\psi_j:\G \times \G \ra \R$
$$\psi_j(x,y)=\log \sigma_1(\rho(x^{-1}y))+\dots+\log\sigma_j(\rho(x^{-1}y))$$
is a hyperbolic distance-like function on $\G$ for $1\leq j \leq m-1$.
This fact combined with Lemma \ref{lem.gam_x} gives us the following.

\begin{proposition} \label{prop.AR2}
Suppose that $\rho : \G \to \SL_m(\R)$ is Borel Anosov. 
Fix a norm $\|\cdot\|$ on $\R^m$ and a generating set $S$ for $\G$.
Then there exists a constant $C_0 >0$ such that for any $x \in \G$ there exists $y\in\G$ such that all of
\[
|x^{-1}y|_S, \ \ | \ell_S[y]- |y|_S |, \ \ \|\sigma(\rho(x))-\sigma(\rho(y))\|, \ \text{ and } \| \lambda(\rho(y)) - \mu(\rho(y))\|
\]
are at most $C_0$.
\end{proposition}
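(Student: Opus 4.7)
The plan is to take $y := \gam_x$, where $\gam_x$ is the element produced by Lemma \ref{lem.closetogeodword}. The crucial point is that this construction is purely combinatorial, depending only on the generating set $S$ and a fixed strongly Markov structure for $\G$; in particular it does not see the representation $\rho$. We can therefore use the same $y$ to verify all four bounds simultaneously, each by applying Lemma \ref{lem.gam_x} to an appropriately chosen hyperbolic distance-like function.

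The first two bounds are immediate. The inequality $d_S(x,\gam_x) \leq R'$ is part of Lemma \ref{lem.closetogeodword}. Since $d_S \in \calD_\G$ is itself a hyperbolic distance-like function, applying Lemma \ref{lem.gam_x} to $\psi = d_S$ yields $|y|_S \leq \ell_S[y] + 2C_0$, and combined with the obvious bound $\ell_S[y] \leq |y|_S$ this controls $\bigl|\ell_S[y] - |y|_S\bigr|$. For the third bound, $d_S(x,y) \leq R'$ forces $x^{-1}y$ to lie in a finite subset of $\G$, so $\rho(x^{-1}y)$ is confined to a compact subset of $\SL_m(\R)$; writing $\rho(y) = \rho(x)\rho(x^{-1}y)$ and applying the standard singular value inequalities $\sigma_i(A)\sigma_m(B) \leq \sigma_i(AB) \leq \sigma_i(A)\sigma_1(B)$ gives uniform control on $|\log \sigma_i(\rho(y)) - \log \sigma_i(\rho(x))|$ for each $i$.

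The fourth bound is the substantive one. Since $\rho$ is Borel Anosov, every exterior power $\wedge^j \rho$ is projective Anosov for $1 \leq j \leq m-1$, so by Lemma \ref{lem.An-hdlf} each function $\psi_j(x,y) := \sum_{i=1}^j \log \sigma_i(\rho(x^{-1}y))$ is a hyperbolic distance-like function on $\G$. Applying Lemma \ref{lem.gam_x} to $\psi_j$ produces a constant $D_j \geq 0$ with $\psi_j(o,y) \leq \ell_{\psi_j}[y] + D_j$. Gelfand's formula applied to $\wedge^j \rho(y)$ identifies $\ell_{\psi_j}[y] = \sum_{i=1}^j \log \lambda_i(\rho(y))$, while Weyl's majorant inequality supplies the reverse bound $\sum_{i=1}^j \log \lambda_i \leq \sum_{i=1}^j \log \sigma_i$. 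Hence
\[
0 \leq \sum_{i=1}^j \log\sigma_i(\rho(y)) - \sum_{i=1}^j \log\lambda_i(\rho(y)) \leq D_j
\]
for $1 \leq j \leq m-1$, and the corresponding quantity vanishes for $j = m$ since $|\det \rho(y)| = 1$. Taking telescoping differences of these partial sums then yields a uniform bound on each $|\log\sigma_j(\rho(y)) - \log\lambda_j(\rho(y))|$, which controls $\|\mu(\rho(y)) - \lambda(\rho(y))\|$.

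I do not anticipate any serious obstacle: the whole argument amounts to invoking Lemma \ref{lem.gam_x} on the finite list $d_S, \psi_1, \ldots, \psi_{m-1}$ of hyperbolic distance-like functions, packaged together with classical linear algebra (Gelfand's formula and Weyl's majorant). The constant $C_0$ in the statement is then obtained as the maximum of the finitely many constants produced in these applications.
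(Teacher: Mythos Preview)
Your proposal is correct and is exactly the argument the paper has in mind: the text immediately preceding the proposition simply says ``This fact combined with Lemma \ref{lem.gam_x} gives us the following,'' and what you have written is a faithful unpacking of that hint, taking $y=\gam_x$ and running Lemma \ref{lem.gam_x} on $d_S$ and on each $\psi_j$, together with the standard linear-algebra facts (Gelfand, Weyl majorant, and the singular-value perturbation estimate for bound three).
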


We can use this result to compare the optimal decay constants for the quotients of singular values and eigenvalues for Borel Anosov representations. Fix a group $\G$ and generating set $S$.
Given $C, \mu > 0$ we say that a representation $\rho: \G \to \SL_m(\R)$ is $(C, \mu)$ $j$-\emph{dominated}, respectively $(C, \mu)$ $j$-\emph{eigenvalue dominated} if for all $x\in\G$
\[
\frac{\sigma_{j+1}(\rho(x))}{\sigma_{j}(\rho(x))} \le Ce^{-\mu |x|_S}, \ \text{respectively} \ \frac{\lambda_{j+1}(\rho(x))}{\lambda_j(\rho(x))} \le C e^{-\mu \ell_S[x]}.
\]

\begin{definition}
Given a Borel Anosov representation $\rho$ into $\SL_m(\R)$ we define $\mu_j^{sing}$ (for each $j=1,\ldots, m$) to be the supremum over the set of all $\mu$ for which $\rho$ is $(C,\mu)$ $j$-dominated for some $C>0$.
We define each $\mu_j^{eig}$ similarly but for the eigenvalue domination constants.
\end{definition}
We would like to compare $\mu_j^{sing}$ and $\mu_j^{eig}$ for each $j$.
Clearly, if $\rho$ is $(C,\mu)$ $j$-dominated then $\rho$ is $(1,\mu)$ $j$-eigenvalue dominated. Hence we necessarily have that $\mu_j^{sing} \le \mu_j^{eig}$. Throughout the following we let $\kappa_j^-$ and $\kappa_j^+$ be
\[
\inf_{[x]\in \conj'} \frac{\log\lambda_{j}(\rho(x)) - \log\lambda_{j+1}(\rho(x))}{\ell_S[x]} \text{ and }
\sup_{[x] \in \conj'} \frac{\log\lambda_{j}(\rho(x)) - \log\lambda_{j+1}(\rho(x))}{\ell_S[x]}
\]
respectively for $j=1, \ldots, m-1$. Note that, $\kappa_j^{\pm} = \kappa_{d-j+1}^{\pm}$ for each $j$.

\begin{theorem}\label{thm.comp}
Suppose that $\rho : \G \to \SL_m(\R)$ is Borel Anosov and $S$ is a fixed finite generating set for $\G$. Then $
\mu_j^{sing} = \mu_j^{eig}= \kappa_j^- $
for each $j=1, \ldots, m-1$. Furthermore, for each $j = 1,\ldots m-1$ there exist $C_j^-, C_j^+ > 0$ such that for all $x \in \G$
\[
C_j^- e^{\kappa_j^- |x|_S} \le \frac{\sigma_{j}(\rho(x))}{\sigma_{j+1}(\rho(x))} \le C_j^+ e^{\kappa_j^+ |x|_S}.
\]
\end{theorem}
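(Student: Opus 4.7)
The plan is to deduce Theorem \ref{thm.comp} directly from Proposition \ref{prop.AR2}, which assigns to each $x\in\G$ a companion $y$ such that the singular values of $\rho(y)$ are close to those of $\rho(x)$, the eigenvalue projection $\lambda(\rho(y))$ is close to the singular value projection $\mu(\rho(y))$, and $|x^{-1}y|_S$, $|\ell_S[y]-|y|_S|$ are both bounded by a uniform constant. This will let me replace singular value ratios of $\rho(x)$ by eigenvalue ratios of $\rho(y)$ at the cost of an additive error that is uniform in $x$, reducing the full statement to the definition of $\kappa_j^\pm$.

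For the second assertion, I apply Proposition \ref{prop.AR2} to a given $x\in\G$ and combine the four uniform closeness statements to get
\[
\log\sigma_j(\rho(x))-\log\sigma_{j+1}(\rho(x)) \;=\; \log\lambda_j(\rho(y))-\log\lambda_{j+1}(\rho(y)) + O(1),
\]
with additive error depending only on $\rho$, the fixed norm and $S$. By the definition of $\kappa_j^{\pm}$, the right-hand side lies in $[\kappa_j^{-}\ell_S[y],\ \kappa_j^{+}\ell_S[y]]+O(1)$ (the case of torsion $y$ being trivial since then both $\ell_S[y]$ and the eigenvalue gap vanish). Substituting $|y|_S$ for $\ell_S[y]$ and then $|x|_S$ for $|y|_S$, each at cost of a term absorbed into $|\kappa_j^{\pm}|\cdot O(1)$, I obtain
\[
\kappa_j^{-}|x|_S - C \;\le\; \log\bigl(\sigma_j(\rho(x))/\sigma_{j+1}(\rho(x))\bigr) \;\le\; \kappa_j^{+}|x|_S + C,
\]
and exponentiating produces the required constants $C_j^{\pm}$.

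For the first assertion, the lower bound just established shows that $\rho$ is $((C_j^{-})^{-1},\kappa_j^{-})$ $j$-dominated, so $\mu_j^{sing}\ge\kappa_j^{-}$. The standard comparison $\mu_j^{sing}\le\mu_j^{eig}$ follows by applying $(C,\mu)$ $j$-domination to $x^n$, taking $n$-th roots, and invoking $\sigma_k(\rho(x^n))^{1/n}\to \lambda_k(\rho(x))$ to conclude $(1,\mu)$ $j$-eigenvalue domination. Conversely, if $\rho$ is $(C,\mu)$ $j$-eigenvalue dominated, then applying the inequality to $x^n$ for $[x]\in\conj'$ and using $\lambda_k(\rho(x^n))=\lambda_k(\rho(x))^n$, $\ell_S[x^n]=n\ell_S[x]$, one extracts $n$-th roots to obtain $\lambda_{j+1}(\rho(x))/\lambda_j(\rho(x))\le e^{-\mu\ell_S[x]}$; taking the infimum of $\log(\lambda_j/\lambda_{j+1})(\rho(x))/\ell_S[x]$ over $\conj'$ yields $\mu_j^{eig}\le\kappa_j^{-}$, so the three inequalities collapse to $\mu_j^{sing}=\mu_j^{eig}=\kappa_j^{-}$. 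The main technical point throughout is the bookkeeping: one must verify that each $O(1)$ error from Proposition \ref{prop.AR2}, once multiplied by $|\kappa_j^{\pm}|$, stays uniformly bounded in $x$, which is automatic since $\kappa_j^{\pm}$ are finite constants and the four errors in Proposition \ref{prop.AR2} depend only on $\rho$, the norm and $S$.
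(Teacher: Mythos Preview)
Your proof is correct and follows essentially the same approach as the paper: both hinge on Proposition \ref{prop.AR2} to replace the singular value gap of $\rho(x)$ by the eigenvalue gap of the companion $\rho(y)$ up to a uniform additive error, and then convert between $\ell_S[y]$, $|y|_S$ and $|x|_S$ using the remaining bounds. Your write-up is in fact more complete than the paper's, since you spell out the equality $\mu_j^{eig}=\kappa_j^-$ (via the standard power trick) and derive the ``furthermore'' inequalities explicitly, whereas the paper only proves $\mu_j^{sing}=\mu_j^{eig}$ and defers the rest to ``follows similarly to Theorem \ref{thm.Dildistancelike}''.
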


\begin{proof}
We begin by proving that $\mu_j^{sing} = \mu_j^{eig}.$
Given $\mu < \mu_j^{eig}$ we want to show that $\rho$ is $(C,\mu)$  $j$-dominated for some $C>0$.  

Take $x \in \G$. Then there exists $y$ satisfying the conditions in Proposition \ref{prop.AR2}. For such $y$ we have that
\[
\frac{\sigma_{j+1}(\rho(x))}{\sigma_{j}(\rho(x))} \le C_1 \frac{\lambda_{j+1}(\rho(y))}{\lambda_j(\rho(y))} \le C_2 e^{-\mu \ell_S[y]} \le C_3 e^{-\mu|y|_S} \le C_4 e^{-\mu|x|_S}
\]
for all $x\in\G$ where the constants $C_1, \ldots, C_4$ are independent of $x$. 
This shows the desired equality. The furthermore statement follows similarly to Theorem \ref{thm.Dildistancelike}.

\end{proof}
For Borel Anosov representations we can obtain a version of Proposition \ref{prop.arwm} that holds for all eigenvalue maps (not just the leading one). That is, we can obtain an optimal growth rate result that compares the eigenvalue maps and translation length (for a word metric) map at each index. We leave the details to the reader.


\section{Manhattan Geodesics}\label{sec.Manhattangeod}
In this section we construct the Manhattan geodesics in $\scrD_\G$ and prove Theorem \ref{thm.Mangeo}. Let $d, d_* \in \calD_{\Gamma}$ be two pseudo metrics, which we assume are \emph{not} roughly similar. Let $\theta=\theta_{d_*/d}: \mathbb{R} \rightarrow \mathbb{R}$ be the Manhattan curve for $d,d_*$ (which we introduced in Section \ref{sec.manhattan}) and write $h(d), h(d_\ast)$ for the exponential growth rates of $d,d_\ast$ respectively. By Theorem \ref{thm.manreg} we know that $\theta$ is strictly decreasing, convex, and continuously differentiable.

\begin{proposition}\label{prop.da}
     For any $t \in \mathbb{R}$ there exist a pseudo metric $d_{t}=d^\thet_{t} \in \calD_{\Gamma}$ and constant $C_{t} \geq 0$ such that
\begin{equation} \label{eq.da}
\left|d_{t}-\left(t d_*+\theta(t) d\right)\right| \leq C_{t}.
\end{equation}
The constants $C_{t}$ can be chosen so that
\begin{equation*}
C_{t}=\left\{\begin{array}{lll}
0 & \text { if } & 0 \leq t \leq h(d_*), \\
-\theta(t) C & \text { if } & t>h(d_*), \\
-t C & \text { if } & t<0,
\end{array}\right.
\end{equation*}
where $C$ is a constant depending only on $d$ and $d_*$.
\end{proposition}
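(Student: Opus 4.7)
My plan is to split into cases according to the position of $t$ relative to the interior Manhattan segment $[0,h(d_*)]$. For $t \in [0,h(d_*)]$, I would verify that $\theta(t) \geq 0$---this follows from the definition of $\theta$ as an abscissa of convergence, since $\sum_x e^{-td_*(o,x)-sd(o,x)}$ diverges whenever $t \leq h(d_*)$ and $s \leq 0$---and set $d_t := td_* + \theta(t)d$. This is a non-negative linear combination of pseudo metrics in $\calD_\G$ with non-zero sum (as $d,d_*$ are not roughly similar); it is $\G$-invariant, satisfies the triangle inequality, is quasi-isometric to $d$, and, as a sum of hyperbolic distance-like functions, is roughly geodesic by Lemma \ref{lem.RGforHDLF} and hyperbolic by Remark \ref{rmk.hdlfhyp}, so $d_t \in \calD_\G$. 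The bound \eqref{eq.da} holds trivially with $C_t=0$.

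For $t > h(d_*)$ we have $\theta(t) < 0$, and the combination $td_*+\theta(t)d$ is no longer automatically non-negative. The plan is to first construct $d_t \in \calD_\G$ whose stable translation length satisfies $\ell_{d_t}[x] = t\ell_{d_*}[x] + \theta(t)\ell_d[x]$ for every $[x] \in \conj'$, using the thermodynamic formalism on the strongly Markov structure $(\calG,\pi,S)$: along the Manhattan curve the topological pressure of the potential $-td_*-\theta(t)d$ vanishes, and the associated Gibbs / Patterson-Sullivan density yields a pseudo metric in $\calD_\G$ with this translation-length spectrum (compare the constructions in \cite{cantrell.tanaka.Man}). Once $d_t$ is in hand, both $d_t + (-\theta(t))d$ and $td_*$ lie in $\calD_\G$ and share the same stable translation length $t\ell_{d_*}$, so Theorem \ref{thm.dilatta} yields $|(d_t + (-\theta(t))d) - td_*| \leq K$ for some $K \geq 0$, which rearranges to $|d_t - (td_*+\theta(t)d)| \leq K$.

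The final step is to argue that $K$ can be taken of the form $-\theta(t) C$, for a constant $C$ depending only on $d$ and $d_*$. Here I would use continuity of the family $t \mapsto d_t$ together with the fact that $d_{h(d_*)} = h(d_*) d_*$ (so the error vanishes at $t = h(d_*)$), combined with the linear scaling of the constants $C_0, R_0$ from Lemma \ref{lem.gam_x} in the magnitude of the input pseudo metric. The comparability of $t$ and $-\theta(t)$ for $t > h(d_*)$---extracted from the convexity of $\theta$ together with the estimate $d \leq \Dil(d,d_*)d_* + O(1)$ coming from Theorem \ref{thm.dilatta}, which yields $t-h(d_*) \leq \Dil(d,d_*)(-\theta(t))$ via a direct Dirichlet-series computation---then lets the error be controlled uniformly by a constant multiple of $-\theta(t)$. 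The case $t < 0$ is symmetric under swapping $d$ and $d_*$ and replacing $\theta$ by the inverse curve $\theta_{d/d_*}$. The principal obstacle is the existence of $d_t \in \calD_\G$ with the prescribed translation-length spectrum in the exterior ranges, which relies on the automatic structure (in particular Proposition \ref{prop.maxreccomp}) and on the thermodynamic machinery; controlling the linear scaling of the error in $-\theta(t)$ is the main technical refinement once this existence is established.
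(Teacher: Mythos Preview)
Your treatment of the range $0\le t\le h(d_*)$ is fine and coincides with the paper's.

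For $t>h(d_*)$ your plan takes a substantial detour, and there are two genuine gaps. First, the existence step: you propose to manufacture $d_t\in\calD_\G$ with the prescribed marked length spectrum $t\ell_{d_*}+\theta(t)\ell_d$ via thermodynamic formalism on the Cannon automaton. The references you invoke produce quasi-conformal \emph{measures} along the Manhattan curve, not pseudo metrics in $\calD_\G$; turning a Gibbs/Patterson--Sullivan density into a left-invariant, roughly geodesic, hyperbolic pseudo metric with the exact translation spectrum you want is nontrivial and is not supplied here. Second, even granting such a $d_t$, marked length spectrum rigidity only gives $|d_t-(td_*+\theta(t)d)|\le K_t$ with an a priori $t$-dependent constant. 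Your argument that $K_t$ can be taken of the form $-\theta(t)C$ (``continuity of $t\mapsto d_t$'', ``linear scaling of the constants from Lemma~\ref{lem.gam_x}'') is heuristic: without an explicit construction of $d_t$ you cannot verify the required continuity or scaling, and the constants in Theorem~\ref{thm.dilatta} do not vary linearly in any obvious way.

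The paper's route avoids both problems by a direct, elementary observation you are missing: the ``Gromov product'' of $\widehat d_t:=td_*+\theta(t)d$ decomposes as $t(\cdot|\cdot)_{o,d_*}+\theta(t)(\cdot|\cdot)_{o,d}$, and Theorem~\ref{thm.dilatta} gives $(x|y)_{o,d}\le D_{d,d_*}(x|y)_{o,d_*}+C$, hence
\[
\widehat{(x|y)}_t \;\ge\; \bigl(t+D_{d,d_*}\theta(t)\bigr)(x|y)_{o,d_*}+\theta(t)C.
\]
A short convexity argument (Lemma~\ref{lemma.D0apos}) shows $t+D_{d,d_*}\theta(t)>0$, so the right-hand side is bounded below by $\theta(t)C$. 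One then simply \emph{defines} $d_t(x,y):=\widehat d_t(x,y)-2\theta(t)C$ for $x\neq y$; the displayed inequality forces nonnegativity and the triangle inequality, while the same inequality (together with $\widehat{(x|y)}_t\le t(x|y)_{o,d_*}$) shows $d_t$ is quasi-isometric to $d_*$ and, via Lemma~\ref{lem.RGforHDLF} and Remark~\ref{rmk.hdlfhyp}, lies in $\calD_\G$. This yields $C_t=-2\theta(t)C$ for free, with no thermodynamic machinery and no continuity argument. The case $t<0$ is then handled by the symmetry you indicate.
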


\begin{remark}\label{rmk.busemann} For any $t\in \R$, the quasiconformal measures $\mu_{t, \theta(t)}$ from \cite[Cor.~2.10]{cantrell.tanaka.Man} are actually quasiconformal for $d_{t}$. That is, there exists $D\geq 1$ such that for every $x \in \Gamma$ and $\mu_{t, \theta(t)}$-a.e. $\xi \in \partial \Gamma$,
\begin{equation*}
D^{-1}e^{-\beta_{t}(x, o ; \xi)}\leq \frac{d x \mu_{t, \theta(t)}}{d \mu_{t, \theta(t)}}(\xi) \leq De^{-\beta_{t}(x, o ; \xi)},
\end{equation*}
where $\beta_t=\beta_{d_t}$ is the Busemann function for $d_t$.
\end{remark}

Let $D_{d_*,d}=\Dil(d_{*}, d), D_{d,d_*}=\Dil\left(d, d_{*}\right)$ and $\ell=\ell_d, \ell_\ast=\ell_{d_\ast}$, and for $t \in \mathbb{R}$ define $\widehat{d}_{t} :=t d_*+\theta(t) d$. 

\begin{lemma}\label{lemma.D0aposweak} 
    If $t >h(d_\ast)$ then $\theta(t) D_{d,d_\ast} + t \geq 0$.
\end{lemma}

\begin{proof}
    Suppose that $t+\theta(t)D_{d,d_\ast}<0$ for some $t>h(d_\ast)$. Then $\theta(t)<0$ and there exists $s>0$ small enough such that $\theta(t)+s<0$ and $\k:=-t-(\theta(t)+s)D_{d,d_\ast}$ is positive. We also consider $\ep>0$ small enough so that $\k+\ep(\theta(t)+s)>0$.
    
    By the definition of $D_{d,d_\ast}$ there exists $[x_\ep]\in \conj'$ such that $(D_{d,d_\ast}-\ep)\ell_\ast[x_\ep]\leq \ell[x_\ep]$. Since $(\theta(t)+s)$ is negative, this implies $(\theta(t)+s)\ep \ell_\ast[x_\ep]\leq (\theta(t)+s)(D_{d,d_\ast}\ell_\ast[x_\ep]-\ell[x_\ep])$. From this we get
    \begin{align*}
    \sum_{[x]\in \conj'}{e^{-t\ell_\ast[x]-(\theta(t)+s)\ell[x]}} & = \sum_{[x]\in \conj'}{e^{\k \ell_\ast[x]+(\theta(t)+s)(D_{d,d_\ast}\ell_\ast[x]-\ell[x])}}\\
    & \geq \sum_{n \geq 1}{e^{\k \ell_\ast[x_\ep^n]+(\theta(t)+s)(D_{d,d_\ast}\ell_\ast[x_\ep^n]-\ell[x_\ep^n])}}\\
    & =\sum_{n \geq 1}{e^{n(\k \ell_\ast[x_\ep]+(\theta(t)+s)(D_{d,d_\ast}\ell_\ast[x_\ep]-\ell[x_\ep]))}}\\
    & \geq \sum_{n \geq 1}{e^{n(\k+(\theta(t)+s)\ep)\ell_\ast[x_\ep]}},
    \end{align*}
    and our choice of $\ep$ implies that the series $\sum_{[x]\in \conj'}{e^{-t\ell_\ast[x]-(\theta(t)+s)\ell[x]}}$ diverges. As this contradicts the definition of $\theta(t)$ (see Remark \ref{rmk.conj=distMC}), the conclusion follows.
\end{proof}

\begin{lemma}\label{lem.defdt}
    There exists a constant $C'$ satisfying the following. If $t>h(d_\ast)$ then the expression
    \begin{equation}\label{eq.defd_t}
d_{t}(x, y):=\begin{cases}
\widehat{d}_{t}(x, y)+2|\theta(t)| C' & \text { if } x \neq y \\
0 & \text { otherwise }
\end{cases}
\end{equation}
defines a $\G$-invariant pseudo metric on $\G$ with Gromov product satisfying
\begin{equation}\label{eqd_tgroprod}
(x | y)_{o,d_t}  \leq t(x|y)_{o,d_\ast}+2|\theta(t)|C'
\end{equation}
for all $x,y\in \G$. In particular, $d_t$ is a hyperbolic distance-like function. Moreover, if $t+D_{d,d_\ast}\theta(t)>0$ then $d_t\in \calD_\G$. 
\end{lemma}

\begin{proof}
    We use the notation $(\cdot | \cdot)=(\cdot|\cdot)_{o,d}$, $(\cdot | \cdot)_{*}=(\cdot|\cdot)_{o,d_*}$, and $\widehat{(\cdot|\cdot)}_{t}=t(\cdot | \cdot)_{*}+\theta(t)(\cdot | \cdot)$ for all $t$. 
By Theorem \ref{thm.dilatta} there is a constant $C' \geq 0$ such that
\begin{equation*}
(x | y) \leq D_{d,d_*}(x | y)_{*}+C'
\end{equation*}
for all $x, y \in \Gamma$. Therefore, for all $x, y \in \Gamma$ and $t>h(d_\ast)$ we have
\begin{equation}\label{eq.grp}
\begin{aligned}
\widehat{(x | y)}_{t} &=t(x | y)_{*}+\theta(t)(x | y) \\
&=\left[t+D_{d,d_*} \theta(t)\right](x | y)_{*}-\theta(t)\left[D_{d,d_*}(x | y)_{*}-(x | y)\right] \\
& \geq\left[t+D_{d,d_*} \theta(t)\right](x | y)_{*}-|\theta(t)| C'\\
& \geq -|\theta(t)|C',
\end{aligned}
\end{equation}
where in the last inequality we used Lemma \ref{lemma.D0aposweak}.
Since $\widehat{d}_{t}$ is $\Gamma$-invariant we have $\widehat{d}_t(x,y)=\widehat{(x^{-1}y|x^{-1}y)}_t$ for all $x,y$, and so by \eqref{eq.grp} we get that the function $d_t$ defined in \eqref{eq.defd_t} is nonnegative and satisfies the triangle inequality. Therefore $d_t$ is a $\G$-invariant pseudo metric on $\G$. 

In addition, if $(\cdot | \cdot)_{t}$ denotes the Gromov product for $d_{t}$ based at the identity element, then for all $x, y \in \G$ we have
\begin{align*}
(x | y)_{t} & =\frac{d_t(x,o)+d_t(o,y)-d_t(x,y)}{2}\\ 
& \leq \frac{\widehat{d}_t(x,o)+\widehat{d}_t(o,y)-\widehat{d}_t(x,y)+4|\theta(t)|C'}{2}\\
&= \widehat{(x|y)}_t+2|\theta(t)|C'\\
& = t(x|y)_{\ast}+\theta(t)(x|y)+2|\theta(t)|C' \\
& \leq t(x|y)_{o,d_\ast}+2|\theta(t)|C',
\end{align*}
since $\theta(t)<0$. This proves \eqref{eqd_tgroprod} and implies that $d_t$ is a hyperbolic distance-like function.

Finally, if $t+D_{d,d_\ast}\theta(t)>0$  then from \eqref{eq.grp}
we see that $d_t$ is quasi-isometric to $d_\ast$, and hence to any word metric. By Remark \ref{rmk.hdlfhyp} this implies that $d_t$ is hyperbolic, so it belongs to $\calD_\G$. 
\end{proof}

\begin{proof}[Proof of Proposition \ref{prop.da}]
There are three cases to consider.

\emph{Case 1)} If $0 \leq t \leq h(d_*)$, then $\widehat{d}_{t} \in \calD_{\Gamma}$ by \cite[Lemma.~4.1]{oregon-reyes.ms}, so we take $d_t=\widehat{d}_t$ and $C_t=0$.

\emph{Case 2)} If $t>h(d_*)$, we let $d_t$ be given by Lemma \ref{lem.defdt}, which satisfies $|d_t-(td_\ast+\theta(t)d)|\leq C_t:=|\theta(t)|C$ for some $C$ depending only on $d$ and $d_\ast$. If $t+D_{d,d_\ast}\theta(t)>0$ then Lemma \ref{lem.defdt} implies that $d_t\in \calD_\G$. Therefore, by Lemma \ref{lemma.D0aposweak} it is enough to show that the equation $t+D_{d,d_\ast}\theta(t)=0$ has no solution for $t>h(d_\ast)$. 

To prove this last assertion, suppose by contradiction that $t_0+D_{d,d_\ast}\theta(t_0)=0$ for some minimal $t_0>h(d_\ast)$. Such a value $t_0$ would exist since $\theta(h(d_\ast))=0$ and $\theta$ is continuous. 
We claim that $h(d_t)=1$ for $h(d_\ast)<t<t_0$. Indeed, for any such $t$ we have $\theta'(t)\leq -D_{d,d_\ast}^{-1}<\theta(t)/t$ by the minimality of $t_0$ and the convexity of $\theta$, and hence the line $\{y=(\theta(t)/t) x\}$ is not tangent to $\theta$ at $(t,\theta(t))$. This implies that $(st,s\theta(t))$ belongs to the open convex set $\calC_{d_\ast/d}^M$ bounded by the graph of $\theta$ when $s>1$, and that $(st,s\theta(t))$ does not belong to the closure of $\calC_{d_\ast/d}^M$ when $s<1$. This gives us that the critical exponent of $s \mapsto \sum_{[x]\in \conj'}{e^{-s(t\ell_\ast[x]+\theta(t)\ell[x])}}$ is one, which equals $h(d_t)$ since $t\ell_\ast+\theta(t)\ell$ is the stable translation length function of $d_t$, see Remark \ref{rmk.conj=distMC}. 

In addition, from \eqref{eq.defd_t} it easily follows that $d_t$ pointwise converges to $t_0$ as $t$ tends to $t_0$. This fact combined with our claim and Lemma \ref{lem.proper} implies that $d_{t_0}$ is a proper hyperbolic distance-like function, which belongs to $\calD_\G$ by Corollary \ref{coro.calDiffproper}. In particular, there exists $\lam>0$ such that $\ell_{d_{t_0}}[x]=t_0\ell_\ast[x]+\theta(t_0)\ell[x] \geq \lam \ell_\ast[x]$ for all $[x]\in \conj$. 

To get our desired contradiction, we use the identity $t_0+D_{d,d_\ast}\theta(t_0)=0$, which implies
\[\lam \ell_\ast[x] \leq t_0\ell_\ast[x]+\theta(t_0)\ell[x]=-\theta(t_0)(D_{d,d_\ast}\ell_\ast[x]-\ell[x])=|\theta(t_0)|(D_{d,d_\ast}\ell_\ast[x]-\ell[x])\]
for all $[x]\in \conj$. This translates to 
\[\ell[x]\leq (D_{d,d_\ast}-|\theta(t_0)|^{-1}\lam)\ell_\ast[x]\]
for all $[x]\in \conj$, contradicting the definition of $D_{d,d_\ast}$ and concluding the proof of the proposition in this case.

\emph{Case 3)} If $t<0$, let $\psi$ be the Manhattan curve for $d_{*}, d$. Then $\psi=\theta^{-1}, t=\psi(s)$ for $s=\theta(t)>h(d)$, and $\widehat{d}_{t}=td_\ast+\theta(t)d=sd+\psi(s)d_\ast$, so the conclusion follows from Case 2 applied to the Manhattan curve $\psi$ at the value $s=\theta(t)$. In this case we obtain a constant of the form $C_t=|\psi(s)|C'=|t|C'$ for some $C'$ depending only on $d$ and $d_\ast$. 
\end{proof}

From the proof above and the fact that $\theta$ is positive on $(0,h(d_\ast))$ we deduce the following.

\begin{corollary}\label{coro.strictineq}
    For any $t>0$ we have $t+D_{d,d_\ast}\theta(t)>0$. Similarly, for all $t<h(d_\ast)$ we have $D_{d_\ast,d}t+\theta(t)>0$.
\end{corollary}

\begin{remark}\label{rmk.MCisnice}
The conclusion of the corollary above is subtle as it is false for arbitrary decreasing, strictly convex and continuously differentiable (even analytic) functions on $\R$. In \cite{cantrell.tanaka.2,cantrell.tanaka.Man}, it is shown that for some pairs of pseudo metrics in $\calD_\G$, their Manhattan curves can be recovered as pressure functions of H\"older potentials on mixing subshifts of finite type. We note that such pressure functions cannot be arbitrary, see \cite{kucherenko-quas}.
\end{remark}

\begin{remark}\label{rmk.h=1} (1) From the proof of Proposition \ref{prop.da} and the definition of $\theta$, we see that $h\left(d_{t}\right)=1$ for all $t\in\R$.

\noindent (2) We also note that for any $s < s_\ast$ we have that $s\theta(s_\ast) \neq s_\ast \theta(s)$. Note that if this equality holds then either $s < s_\ast < 0$ or $h(d_\ast) < s < s_\ast$. Suppose the latter is true, then there exists $\xi \in (s,s_\ast)$ with $\theta'(\xi) = \frac{\theta(s)}{s}$ and $ \theta(\xi) -\xi\theta'(\xi) < 0$, but then this implies that $\theta(t) + D_{d,d_\ast}^{-1}t \le \theta(\xi) -\xi\theta'(\xi) < 0$ contradicting Lemma \ref{lemma.D0aposweak}. The case that $s < s_\ast < 0$ can be proved similarly.
\end{remark}

\subsection{Explicit computations}
Let $d, d_*$ and $\thet$ be as above, and for $t\in \R$ let $\rho_{t}=\rho_t^\thet=\left[d_{t}\right]\in \scrD_\G$ be the metric structure induced by the pseudo metric $d_t$ from Proposition \ref{prop.da}. Let $\rho=[d]=\rho_0$ and $\rho_*=[d_*]=\rho_{h(d_*)}$.

For the rest of the section, we will use the notation $D_{s, t}=\Dil\left(d_{s}, d_{t}\right)$ and $\Delta_{s, t}=\Delta\left(\rho_{s}, \rho_{t}\right)$ for $s, t \in \mathbb{R}$, so that $\Delta_{s, t}=\log \left(D_{s, t} D_{t, s}\right)$, recall equation \eqref{eq.defDel}. Note that $d_0=h(d)d$ and $d_{h(d_*)}=h(d_*)d_*$. Theorem \ref{thm.Mangeo} will follow from the following estimates.

\begin{proposition}\label{prop.Mancomp} For any $t\in \mathbb{R}$ we have
\begin{align*}
    D_{0, t}&= \begin{cases}
        h(d)\left(t D_{d_*,d}+\theta(t)\right)^{-1} & \text {if $t<0$} \\
        h(d)\left(t D_{d,d_*}^{-1}+\theta(t)\right)^{-1} & \text {if $t >0$}
    \end{cases}\\
      D_{t, 0}&= \begin{cases}
        h(d)^{-1}(t D_{d,d_*}^{-1}+\theta(t)) & \text {if $t<0$} \\
        h(d)^{-1}(t D_{d_*,d}+\theta(t)) & \text {if $t >0$}
    \end{cases}\\
       D_{h(d_*),t} &= \begin{cases}
       h(d_*)\left(\theta(t) D_{d_*,d}^{-1}+t\right)^{-1} & \text {if $t<h(d_\ast)$} \\
        h(d_*)\left(\theta(t) D_{d,d_*}+t\right)^{-1} & \text {if $t >h(d_\ast)$}
    \end{cases}\\
       D_{t, h(d_*)}&= \begin{cases}
        h(d_*)^{-1}(\theta(t) D_{d,d_*}+t) & \text {if $t<h(d_\ast)$} \\
        h(d_*)^{-1}(\theta(t) D_{d_*,d}^{-1}+t) & \text {if $t >h(d_\ast)$}
    \end{cases}
\end{align*}
\noindent  and hence
\begin{equation*} \displaystyle e^{\Delta_{t, 0}}=\left\{\begin{array}{ll}\displaystyle \left(\frac{t D_{d,d_*}^{-1}+\theta(t)}{t D_{d_*,d}+\theta(t)}\right) & \text { if } \ t<0 \\ \displaystyle\left(\frac{t D_{d_*,d}+\theta(t)}{t D_{d,d_*}^{-1}+\theta(t)}\right) & \text { if } \ t>0 \end{array}\right.
\ 
 \displaystyle e^{\Delta_{t, h(d_*)}}=\left\{\begin{array}{ll}\displaystyle \left(\frac{\theta(t) D_{d,d_*}+t}{\theta(t) D_{d_*,d}^{-1}+t}\right) & \text { if } \ t<h(d_*) \\ \displaystyle \left(\frac{\theta(t) D_{d_*,d}^{-1}+t}{\theta(t) D_{d,d_*}+t}\right) & \text { if } \ t>h(d_*) .\end{array}\right.
\end{equation*}
\end{proposition}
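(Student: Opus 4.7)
The plan is to reduce all four dilation formulas to a one-dimensional optimization over the ratio $r[x] := \ell_{d_*}[x]/\ell_d[x]$. The crucial input is that Proposition \ref{prop.da} gives an \emph{exact} linear relation for stable translation lengths: dividing the bound $|d_t(o, x^n) - (t\,d_*(o, x^n) + \theta(t)\,d(o, x^n))| \leq C_t$ by $n$ and letting $n \to \infty$ kills the additive error, yielding
\[
\ell_{d_t}[x] = t\,\ell_{d_*}[x] + \theta(t)\,\ell_d[x] \quad \text{for every } [x] \in \conj'.
\]
In particular, $\ell_{d_0} = h(d)\,\ell_d$ and $\ell_{d_{h(d_*)}} = h(d_*)\,\ell_{d_*}$, since $\theta(0) = h(d)$ and $\theta(h(d_*)) = 0$.

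Next I would substitute this identity into each dilation, using that $r[x]$ takes values with $\sup_{[x]} r[x] = D_{d_*, d}$ and $\inf_{[x]} r[x] = D_{d, d_*}^{-1}$ directly from the definition of the dilations. For $D_{0, t}$ I divide numerator and denominator by $\ell_d[x]$ to get
\[
D_{0, t} = h(d)\, \sup_{[x]} \frac{1}{t\,r[x] + \theta(t)},
\]
and similarly $D_{t, 0} = h(d)^{-1} \sup_{[x]} (t\,r[x] + \theta(t))$; the sign of $t$ then determines which endpoint of $[D_{d, d_*}^{-1}, D_{d_*, d}]$ is extremal, producing the two cases. For $D_{h(d_*), t}$ and $D_{t, h(d_*)}$ I would instead divide by $\ell_{d_*}[x]$, so that the quantity varying over $[x]$ is $s[x] = r[x]^{-1}$ multiplied by $\theta(t)$; the endpoint choice then depends on the sign of $\theta(t)$, which changes at $t = h(d_*)$.

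A separate check is that every denominator appearing in the final formulas is strictly positive, so the suprema are finite and the expressions make sense. For $0 \leq t \leq h(d_*)$ this is automatic because $\hat{d}_t$ is already a $\G$-invariant pseudo metric (Case 1 of the proof of Proposition \ref{prop.da}). For $t > h(d_*)$, the positivity of $t + \theta(t)\, D_{d, d_*}$ is exactly Lemma \ref{lemma.D0apos}, and the case $t < 0$ follows by applying the same lemma to the reciprocal Manhattan curve $\psi = \theta^{-1}$ for the reversed pair $(d_*, d)$, as in Case 3 of the proof of Proposition \ref{prop.da}. Finally, the two $e^{\Delta}$ formulas drop out by multiplying the relevant pairs of dilations using $e^{\Delta_{s, t}} = D_{s, t}\, D_{t, s}$.

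I do not anticipate any genuine obstacle: once the linear identity for $\ell_{d_t}$ is secured, the entire proposition reduces to a sign-tracking exercise. The most delicate step is keeping straight, in each of the four cases, whether taking the supremum of a reciprocal corresponds to minimizing or maximizing $r[x]$, and matching this with the sign of the affine coefficient ($t$ or $\theta(t)$) that dictates which endpoint of $[D_{d, d_*}^{-1}, D_{d_*, d}]$ is attained.
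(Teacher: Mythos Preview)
Your approach is correct and is actually more direct than the paper's. You reduce everything to the single identity $\ell_{d_t}=t\,\ell_{d_*}+\theta(t)\,\ell_d$ and then optimize the monotone function $r\mapsto (tr+\theta(t))^{\pm 1}$ (resp.\ $s\mapsto (\theta(t)s+t)^{\pm 1}$) over the range of the ratio $r[x]=\ell_{d_*}[x]/\ell_d[x]$, whose infimum and supremum are $D_{d,d_*}^{-1}$ and $D_{d_*,d}$ by definition. Continuity plus the positivity checks via Lemma~\ref{lemma.D0apos} (and its $\psi=\theta^{-1}$ analogue) then pin down the supremum at the correct endpoint. This handles all four dilations and all sign regimes in one stroke.

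The paper instead argues at the level of the pseudo metrics themselves: it first proves the ``inner'' cases ($t>0$ for $D_{0,t},D_{t,0}$ and $t<h(d_*)$ for $D_{h(d_*),t},D_{t,h(d_*)}$) in Lemma~\ref{lem.compDa1} by manipulating inequalities of the form $d\lesssim D_{d,d_*}d_*$, then establishes a multiplicativity relation $D_{r,t}=D_{r,s}D_{s,t}$ in Lemma~\ref{lem.Dgeod}, and finally deduces the ``outer'' cases in Lemma~\ref{lem.compDa2} by combining the two. Your argument bypasses Lemma~\ref{lem.Dgeod} entirely. The trade-off is that the paper's multiplicativity lemma is reused later, in the proof of Theorem~\ref{thm.Mangeo}, to show the Manhattan curve is geodesic; your route gets Proposition~\ref{prop.Mancomp} faster but does not furnish that lemma as a by-product.

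One small remark: your case split for $D_{h(d_*),t}$ and $D_{t,h(d_*)}$ is governed by the sign of $\theta(t)$, hence occurs at $t=h(d_*)$, which matches Lemmas~\ref{lem.compDa1} and~\ref{lem.compDa2} and the $e^{\Delta_{t,h(d_*)}}$ formula; the displayed case split at $t=0$ in the proposition statement for those two dilations appears to be a typo.
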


We begin the proof of Proposition \ref{prop.Mancomp} with some lemmas. For two functions $f$, $g$ on a set $X$, the notation $f \lesssim g$ means that there is some $C \geq 0$ such that $f(x) \leq g(x)+C$ for all $x \in X$. We also write $f \eqsim g$ if $f \lesssim g$ and $g \lesssim f$.
\begin{lemma}\label{lem.compDa1}If $t>0$, then
\begin{itemize}
    \item[$i)$] $D_{0, t}=h(d)\left(t D_{d,d_*}^{-1}+\theta(t)\right)^{-1}$, and
\item[$ii)$] $D_{t, 0}=h(d)^{-1}(t D_{d_*,d}+\theta(t))$.
\end{itemize}
If $t<h(d_*)$, then
\begin{itemize}
    \item[$iii)$] $D_{t, h(d_*)}=h(d_*)^{-1}(\theta(t) D_{d, d_*}+t)$, and
\item[$iv)$] $D_{h(d_*), t}=h(d_*)\left(\theta(t) D_{d_*,d}^{-1}+t\right)^{-1}$.
\end{itemize}
\end{lemma}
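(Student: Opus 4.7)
The approach converts each dilation into the supremum of a Möbius function of the single ratio $r[x] := \ell_{d_*}[x]/\ell_d[x]$, which is then optimized by monotonicity.

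\medskip

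First I would use the fact that stable translation lengths are insensitive to bounded additive error: the bound $|d_t - (td_* + \theta(t)d)| \leq C_t$ from Proposition~\ref{prop.da} immediately yields the identity
\[
\ell_{d_t}[x] = t\,\ell_{d_*}[x] + \theta(t)\,\ell_d[x]
\]
for every $[x] \in \conj'$. Specializing to $t=0$ (with $\theta(0)=h(d)$) gives $\ell_{d_0} = h(d)\ell_d$, and $t=h(d_*)$ (with $\theta(h(d_*))=0$) gives $\ell_{d_{h(d_*)}} = h(d_*)\ell_{d_*}$.

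\medskip

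Second, the definitions of $D_{d,d_*}$ and $D_{d_*, d}$ give $\inf_{[x]} r[x] = D_{d,d_*}^{-1}$ and $\sup_{[x]} r[x] = D_{d_*, d}$. Dividing through by $\ell_d[x]$ or by $\ell_{d_*}[x]$, each target ratio becomes a Möbius function of $r[x]$ or of $s := 1/r[x]$; for instance,
\[
\frac{\ell_{d_0}[x]}{\ell_{d_t}[x]} = \frac{h(d)}{t\,r[x] + \theta(t)},
\]
strictly decreasing in $r$, while (ii) is its reciprocal and strictly increasing. For (iii) and (iv) the parametrization by $s$ gives $(t + \theta(t)s)/h(d_*)$ and its reciprocal, strictly monotone in $s$ because $\theta(t) > 0$ throughout $t < h(d_*)$. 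By continuity, each supremum is achieved in the limit at the endpoint of the range of $r$ (or $s$) prescribed by the monotonicity, producing the four closed-form expressions in the statement.

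\medskip

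The main technical obstacle is verifying strict positivity of the relevant denominators in (i) and (iv), and of the numerator in (iii), so that the suprema are finite and positive. For $t \geq 0$ all three positivities are immediate from $\theta(t)\geq 0$. For (i) with $t > h(d_*)$, the bound $tD_{d,d_*}^{-1} + \theta(t) > 0$ is exactly Lemma~\ref{lemma.D0apos}. For (iii) and (iv) with $t<0$, I would apply Lemma~\ref{lemma.D0apos} to the inverse Manhattan curve $\psi = \theta^{-1}$ (the Manhattan curve of the swapped pair $(d,d_*)$): this yields $\theta(t) + tD_{d_*, d} > 0$, which directly gives the required positivity for (iv) after dividing by $D_{d_*, d}$. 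For (iii), combining this with the strict inequality $D_{d,d_*}\,D_{d_*, d} > 1$ (which holds since $d, d_*$ are not roughly similar) gives $\theta(t) D_{d,d_*} + t > 0$. Once these positivities are secured, the four formulas follow.
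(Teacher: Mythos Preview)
Your argument is correct and is a genuinely different route from the paper's. The paper works at the level of the pseudometrics themselves using the relation $\lesssim$: for $i)$ it first gets the upper bound by substituting $d\lesssim D_{d,d_*}d_*$ into $d_t\eqsim td_*+\theta(t)d$, and then extracts the matching lower bound by starting from $d_0\lesssim D_{0,t}d_t$, rearranging to isolate $d$ versus $d_*$, and recognizing the resulting inequality as a bound on $D_{d,d_*}$. Parts $iii)$ and $iv)$ are then deduced from $i)$ and $ii)$ by passing to the inverse Manhattan curve $\psi=\theta^{-1}$. By contrast, you pass immediately to stable translation lengths (where the additive constants disappear), express each dilation as the supremum of a single monotone function of the ratio $r[x]=\ell_{d_*}[x]/\ell_d[x]$, and read off the value at the relevant endpoint $D_{d,d_*}^{-1}$ or $D_{d_*,d}$. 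This is more direct: it replaces the paper's separate upper/lower bound manipulations with one monotonicity observation, and handles $iii)$, $iv)$ symmetrically rather than by reduction. Both approaches ultimately rely on Lemma~\ref{lemma.D0apos} (and its swap for $\psi$) to guarantee positivity of the denominators; your derivation of $\theta(t)D_{d,d_*}+t>0$ for $t<0$ from $\theta(t)+tD_{d_*,d}>0$ and $D_{d,d_*}D_{d_*,d}>1$ is a nice touch that the paper does not need because it never isolates that quantity at this stage.
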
 

\begin{proof}
    Let $t>0$. We have $d \lesssim D_{d,d_*} d_{*}$, and hence $$d_{t} \eqsim t d_{*}+\theta(t) d \gtrsim\left(t D_{d,d_*}^{-1}+\theta(t)\right) d=h(d)^{-1}\left(t D_{d,d_*}^{-1}+\theta(t)\right) d_0.$$
By Corollary \ref{coro.strictineq}, $t D_{d,d_*}^{-1}+\theta(t) > 0$, and so $D_{0, t} \leq h(d)\left(t D_{d,d_*}^{-1}+\theta(t)\right)^{-1}$.
The reverse inequality of $i)$ is similar. From $d_{0} \leq D_{0, t} d_{t}$ we get
\begin{equation*}
d=h(d)^{-1}d_{0} \lesssim h(d)^{-1}[D_{0, t} t d_{*}+D_{0, t} \theta(t) d],
\end{equation*}
and hence
\begin{equation}\label{eq.D0a}
\left(1-h(d)^{-1}D_{0, t} \theta(t)\right) d \lesssim h(d)^{-1}D_{0, t}t d_{*}.
\end{equation}
The left hand side of \eqref{eq.D0a} is positive for $t \geq h(d_*)$, and for $0<t<h(d_*)$ we have
\begin{equation*}
h(d)^{-1}\left(1+\frac{t}{\theta(t)} D_{d,d_*}^{-1}\right) d_{0}=\left(1+\frac{t}{\theta(t)} D_{d,d_*}^{-1}\right) d \lesssim d+\frac{t}{\theta(t)} d_{*} \eqsim \theta(t)^{-1} d_{t} \text {, }
\end{equation*}
thus
\begin{equation*}
D_{0, t} \leq h(d)\theta(t)^{-1}\left(1+\frac{t}{\theta(t)} D_{d,d_*}^{-1}\right)^{-1}<h(d)\theta(t)^{-1}
\end{equation*}
and the left hand side of \eqref{eq.D0a} is positive for any $t>0$. This gives $D_{d,d_*} \leq h(d)^{-1}D_{0, t} t\left(1-h(d)^{-1}D_{0, t} \theta(t)\right)^{-1}$ or equivalently $D_{0, t} \geq h(d)\left(t D_{d,d_*}^{-1}+\theta(t)\right)^{-1}$.

We can prove $ii)$ in the same way, and identities $iii)$ and $iv)$ follow from $i)$ and $ii)$ applied to $\psi=\theta^{-1}$ and $s=\theta(t)$. Indeed, if $t<h(d_\ast)$ then $s>0$, and we note that $D_{h(d_*), t}=\Dil(d_{h(d_\ast)},d_t)=\Dil(d_0^\psi,d_s^\psi)$ and $D_{t, h(d_*)}=\Dil(d_t,d_{h(d_\ast)})=\Dil(d_s^\psi,d_0^\psi)$, for $d_0^\psi=\psi(0)d_\ast=h(d_\ast)d_\ast=d_{h(d_\ast)}$ and $d_s^\psi\eqsim sd+\psi(s)d_\ast=\theta(t)d+td_\ast\eqsim d_t$. Here $d_0^\psi,d_s^\psi$ are the pseudo metrics given by Proposition \ref{prop.da} applied to the curve $\psi$.
\end{proof}

Recall that we are assuming $[d]\neq [d_\ast]$. 

\begin{lemma}\label{lem.distinct}
If $r\neq t$ then the pseudo metrics $d_r$ and $d_t$ are not roughly similar.  
\end{lemma}

\begin{proof}
 Suppose that $r<t$ but $[d_r]=[d_t]$. By Remark \ref{rmk.h=1} (1) we have  $h(d_r)=h(d_t)=1$, and hence $d_r \eqsim d_r$ and $rd_\ast+\theta(r)d \eqsim td_\ast+\theta(t)d$. This implies $(t-r)d_\ast \eqsim (\theta(r)-\theta(t))d$, contradicting that $d$ and $d_\ast$ are not roughly similar.     
\end{proof}

\begin{lemma}\label{lem.reparam} Take real numbers $s < s_\ast$ and let $\ov\theta=\theta_{d_{s_\ast}/d_s}$ be the Manhattan curve for the pair $d_s,d_{s_\ast}$. Then for each $t\in \R$ we have that $\thet(\al(t))=\beta(t)$, where \begin{equation*}
    \al(t)=ts_*+\ov\thet(t)s \ \text{ and } \ \beta(t)=t\thet(s_*)+\ov\thet(t)\thet(s).
\end{equation*} Moreover, $\alpha$ and $\beta$ are bijections on $\R$.
\end{lemma}
\begin{proof} By the definition of $\ov\theta$, for all $t$ we get
    \begin{equation*}
\begin{aligned}
    d_t^{\ov\theta} \eqsim td_{s_\ast}+\ov\thet(t)d_s & \eqsim t(s_\ast d_*+\thet(s_\ast)d)+\ov\thet(t)(sd_*+\thet(s)d)\\
    & =[ts_*+\ov\thet(t)s]d_*+[t\thet(s_*)+\ov\thet(t)\thet(s)]d, 
\end{aligned}
\end{equation*}
where $d_t^{\ov\theta}$ is the pseudo metric given by Proposition \ref{prop.da} applied to $\ov\thet$. Since $h(d_t^{\ov\theta})=1$ by Remark \ref{rmk.h=1} (1), from the definition of $\theta$ we deduce that \begin{equation}\label{eq.alphabeta}
\theta(\alpha(t))=\theta(ts_\ast+\ov\theta(t)s)=t\theta(s_\ast)+\ov\theta(t)\theta(s)=\beta(t).
\end{equation}
    To prove the moreover statement we first observe that $\alpha$ and $\beta$ are $C^1$ and $\theta'(\alpha(t))\al'(t) = \beta'(t)$. We then note that the limits
    \[
    \lim_{t\to \infty}{\alpha'(t)}=\lim_{t\to\infty} \frac{\al(t)}{t} =s_\ast - D_{d_s,d_{s_\ast}}^{-1}s  \ \text{ and } \ \lim_{t\to \infty}{\beta'(t)}=\lim_{t\to\infty} \frac{\beta(t)}{t} = \thet(s_\ast) - D_{d_s,d_{s_\ast}}^{-1}\theta(s) 
    \]
    both exist by \cite[Cor.~3.3]{cantrell.tanaka.Man} and that we have similar expressions when $t \to -\infty$.
    Since $\theta'(\alpha(t))\in [-D_{d_\ast,d},-D_{d,d_\ast}^{-1}]$ for all $t$, it follows from the expression $\theta'(\alpha(t))\al'(t) = \beta'(t)$ that when $t\to \infty$, both $|\al(t)| \to \infty$ and $|\beta(t)| \to \infty$ unless $s_\ast - D_{d_s,d_{s_\ast}}^{-1}s = \thet(s_\ast) - D_{d_s,d_{s_\ast}}^{-1}\theta(s)=0 $. However this equality would imply $s \theta(s_\ast) = s_\ast \theta(s)$ contradicting Remark \ref{rmk.h=1} (2). A similar argument shows that as $t\to -\infty$ both $|\al(t)| \to \infty$ and $|\beta(t)| \to \infty$.
    
    Therefore, to conclude the proof it suffices to show that the derivatives of $\al$ and $\beta$ are never $0$ (as they are then strictly monotone and unbounded on $(-\infty,0)$ and $(0,\infty)$). Note that $\al'(t) = 0$ if and only if $\beta'(t) = 0$ and if there exists $t_0 \in \R$ with $\al'(t_0) = \beta'(t_0) = 0$ then $s_\ast\ov{\theta}'(t_0) + s = \theta(s_\ast)\ov{\theta}'(t_0) +\theta(s) = 0$ from which it follows that $ s_\ast \ov\theta(s) = s\ov\theta(s_\ast)$. Again this is impossible due to Remark \ref{rmk.h=1} (2) and the proof is complete.
\end{proof}

\begin{lemma}\label{lem.Dgeod}
    If $r<s<t$, then
\begin{equation*}
D_{r,t}=D_{r, s} \cdot D_{s, t} \text { and } D_{t, r}=D_{t, s} \cdot D_{s, r} \text {. }
\end{equation*}
\end{lemma}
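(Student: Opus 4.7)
The plan is to recast the supremum defining each dilation $D_{r,s}$ as the supremum of a M\"obius function of a single real parameter, exploit monotonicity of that M\"obius function to reduce to evaluation at an endpoint of the parameter's range, and then read off multiplicativity from the pointwise algebraic identity $f_{r,t}=f_{r,s}\cdot f_{s,t}$.

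First I would use Proposition~\ref{prop.da} to promote the rough identity $d_t\eqsim td_{*}+\theta(t)d$ to an \emph{exact} identity
\[
\ell_{d_t}[x] \;=\; t\,\ell_{d_*}[x] + \theta(t)\,\ell_d[x]
\]
for every $[x]\in\conj'$, since bounded additive errors vanish under stable translation length. Setting $\lambda_{[x]} := \ell_{d_*}[x]/\ell_d[x]$ (positive and finite because $d,d_*\in\calD_\G$) and
\[
f_{a,b}(\lambda) \;:=\; \frac{a\lambda + \theta(a)}{b\lambda + \theta(b)},
\]
the definition of dilation becomes $D_{a,b} = \sup_{[x]\in\conj'} f_{a,b}(\lambda_{[x]})$. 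By the very definition of $D_{d_*,d}$ and $D_{d,d_*}$, the set $\Lambda := \{\lambda_{[x]} : [x]\in\conj'\}$ satisfies $\inf\Lambda = 1/D_{d,d_*}$ and $\sup\Lambda = D_{d_*,d}$.

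Second, I would show that for $r<s$ the function $f_{r,s}$ is strictly decreasing in $\lambda$: a direct computation gives $f_{r,s}'(\lambda) = (r\theta(s)-s\theta(r))/(s\lambda+\theta(s))^{2}$, so this reduces to the sign inequality $r\theta(s)-s\theta(r)<0$. For $0<r<s$ this is the strict monotonicity of $r\mapsto -\theta(r)/r$ established in the proof of Lemma~\ref{lemma.D0apos}; for $r<s<0$ the same conclusion follows by applying Lemma~\ref{lemma.D0apos} to the dual Manhattan curve $\psi = \theta^{-1}$; and in the mixed case $r<0<s$, the asymptotic values $\theta(r)/r \to -D_{d_*,d}$ as $r\to-\infty$ and $\theta(s)/s \to -1/D_{d,d_*}$ as $s\to\infty$ combine with the two one-sided monotonicities to reduce the inequality to $D_{d,d_*}D_{d_*,d}>1$, which is exactly the hypothesis that $d$ and $d_*$ are not roughly similar. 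Continuity of $f_{r,s}$ then yields the closed formula $D_{r,s} = f_{r,s}(1/D_{d,d_*})$, and the symmetric argument (with $\lambda\mapsto f_{t,s}(\lambda)$ strictly \emph{increasing} for $s<t$) gives $D_{t,s} = f_{t,s}(D_{d_*,d})$.

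Once these closed formulas are in hand, multiplicativity is immediate: the middle factor $s\lambda+\theta(s)$ cancels in the pointwise identity $f_{r,s}(\lambda)\cdot f_{s,t}(\lambda) = f_{r,t}(\lambda)$, and evaluating at $\lambda = 1/D_{d,d_*}$ gives $D_{r,s}\cdot D_{s,t} = D_{r,t}$; the same cancellation at $\lambda = D_{d_*,d}$ yields $D_{t,s}\cdot D_{s,r} = D_{t,r}$. The main obstacle is the sign analysis of $r\theta(s)-s\theta(r)$ in the mixed-sign regime $r<0<s$, where the one-sided monotonicities supplied by Lemma~\ref{lemma.D0apos} and its dual must be patched together using the strict inequality $D_{d,d_*}D_{d_*,d}>1$; this is also the only place where the hypothesis $[d]\neq[d_*]$ enters in an essential way.
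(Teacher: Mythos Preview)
Your argument is correct and takes a genuinely different route from the paper. The paper first verifies the identity in the special case $r=0$, $t=h(d_*)$ by plugging in the explicit formulas of Lemma~\ref{lem.compDa1}, and then reduces the general case to this one by passing to the Manhattan curve $\psi$ of the pair $d_r,d_t$ and using that $d_a^\psi\eqsim d_{\lambda(a)}^\theta$ for an increasing reparametrization $\lambda$. Your approach bypasses Lemma~\ref{lem.compDa1} entirely: you obtain closed formulas $D_{r,s}=f_{r,s}(1/D_{d,d_*})$ and $D_{s,r}=f_{s,r}(D_{d_*,d})$ for \emph{all} $r<s$ in one stroke, so that multiplicativity becomes the trivial cancellation $f_{r,s}\cdot f_{s,t}=f_{r,t}$. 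In effect you are proving Proposition~\ref{prop.Mancomp} and Lemma~\ref{lem.Dgeod} simultaneously. The cost is the case analysis for the sign of $r\theta(s)-s\theta(r)$, which the paper's reparametrization trick sidesteps; the gain is a more transparent algebraic picture and a direct derivation of all the dilations.

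Two small remarks. First, in the mixed-sign case $r<0<s$ you actually only need the weak inequality $D_{d,d_*}D_{d_*,d}\ge 1$ (which always holds), because the one-sided monotonicities already give the \emph{strict} bounds $\theta(r)/r<-D_{d_*,d}$ and $\theta(s)/s>-1/D_{d,d_*}$; the hypothesis $[d]\neq[d_*]$ is really a standing assumption of the section rather than something essential to this particular step. Second, to pass from $\sup_{\Lambda}f_{r,s}$ to $f_{r,s}(\inf\Lambda)$ you implicitly need $s\lambda+\theta(s)>0$ at the endpoint $\lambda=1/D_{d,d_*}$ (and at $D_{d_*,d}$ for the reverse dilation); this follows from the same monotonicity bounds you already established, but it is worth making explicit since $f_{r,s}$ is a M\"obius function with a pole.
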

\begin{proof}
    For the case when $r=0$ and $t=h(d_*)$, the conclusion follows easily from Lemma \ref{lem.compDa1}.  For the general case, let $\psi$ be the Manhattan curve for $d_{r}, d_{t}$, which belong to $\calD_\G$ by Proposition \ref{prop.da}. By Lemma \ref{lem.distinct} we have $[d_r]\neq [d_t]$ and by Lemma \ref{lem.reparam} we can reparameterise $\psi$ using the bijection $\alpha(a):=a t+\psi(a) r$ so that $d_{0}^{\psi} \eqsim d_{r}$, $d_{1}^{\psi} \eqsim d_{t}$ and in fact $d_{a}^{\psi} \eqsim d_{\alpha(a)}^{\theta}$ for all $a$. Since $h(d_t)=1$, the general case then follows from the first case applied to $\psi$ and the value $0<\tilde{s}<1$ satisfying $\alpha(\tilde{s})=s$.
\end{proof}

\begin{lemma}\label{lem.compDa2} If $t<0$, then
\begin{itemize}
\item[$i)$] $D_{0, t}=h(d)\left(t D_{d_*,d}+\theta(t)\right)^{-1}$, and
\item[$ii)$] $D_{t, 0}=h(d)^{-1}(t D_{d,d_*}^{-1}+\theta(t))$.
\end{itemize}
Also, if $t>h(d_*)$, then
\begin{itemize}
\item[$iii)$] $D_{t, h(d_*)}=h(d_*)^{-1}(\theta(t) D_{d_*,d}^{-1}+t)$, and
\item[$iv)$] $D_{h(d_*), t}=h(d_*)\left(\theta(t) D_{d,d_*}+t\right)^{-1}$.
\end{itemize}
\end{lemma}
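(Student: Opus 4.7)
The plan is to derive all four identities by combining the multiplicative decomposition from Lemma \ref{lem.Dgeod} with the formulas already established in Lemma \ref{lem.compDa1}, together with the elementary computation
\[
D_{0, h(d_*)} = \Dil(h(d) d, h(d_*) d_*) = \tfrac{h(d)}{h(d_*)} D_{d,d_*},
\qquad
D_{h(d_*), 0} = \tfrac{h(d_*)}{h(d)} D_{d_*, d},
\]
which follows from $d_0 = h(d) d$, $d_{h(d_*)} = h(d_*) d_*$ and the scaling behavior of $\Dil$.

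For parts $iii)$ and $iv)$, where $t > h(d_*) > 0$, I would apply Lemma \ref{lem.Dgeod} with the ordering $0 < h(d_*) < t$ to write
\[
D_{0,t} = D_{0, h(d_*)} \cdot D_{h(d_*), t},
\qquad
D_{t,0} = D_{t, h(d_*)} \cdot D_{h(d_*), 0},
\]
and then solve for $D_{h(d_*),t}$ and $D_{t,h(d_*)}$ using the formulas $D_{0,t} = h(d)(t D_{d,d_*}^{-1} + \theta(t))^{-1}$ and $D_{t,0} = h(d)^{-1}(t D_{d_*,d} + \theta(t))$ from Lemma \ref{lem.compDa1}$(i)$--$(ii)$. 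A small simplification then yields the claimed expressions.

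For parts $i)$ and $ii)$, where $t < 0$, I would use the inversion trick already invoked in the proof of Proposition \ref{prop.da} and Lemma \ref{lem.compDa1}. Letting $\psi = \theta^{-1}$ be the Manhattan curve for the pair $d, d_*$ (with the roles of the two pseudo metrics exchanged), the pseudo metrics satisfy $d_t^\theta \eqsim d_s^\psi$ for $s = \theta(t)$, and in particular $d_0^\theta \eqsim d_{h(d)}^\psi$. For $t < 0$ one has $s = \theta(t) > h(d)$, which places the pair $(h(d), s)$ in the range already handled: applying the argument of the previous paragraph to $\psi$ (now with the growth rate $h(d)$ playing the role of $h(d_*)$, and the dilations $D_{d_*,d}, D_{d,d_*}$ swapped accordingly) one obtains
\[
D_{h(d), s}^\psi = D_{0, s}^\psi / D_{0, h(d)}^\psi,\qquad D_{s, h(d)}^\psi = D_{s, 0}^\psi / D_{h(d), 0}^\psi,
\]
and substituting back $s = \theta(t)$, $\psi(s) = t$ reproduces the expressions for $D_{0,t}$ and $D_{t,0}$ with the factors $D_{d_*,d}$ and $D_{d,d_*}^{-1}$ appearing precisely as claimed.

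The only point requiring care is the bookkeeping under the swap $\theta \leftrightarrow \psi$: one must keep track that passing to $\psi$ exchanges the roles of $d$ and $d_*$, and in particular interchanges $D_{d,d_*}$ with $D_{d_*,d}$ and $h(d)$ with $h(d_*)$ in every formula borrowed from Lemma \ref{lem.compDa1}. Positivity of all denominators that arise (for instance $t D_{d_*,d} + \theta(t) > 0$ when $t < 0$) is guaranteed by Lemma \ref{lemma.D0apos} applied to $\psi$, once the parameter ranges are translated correctly. Besides this translation, no further estimates are needed, since all hard analytic content has been absorbed into Lemmas \ref{lem.compDa1} and \ref{lem.Dgeod}.
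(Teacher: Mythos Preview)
Your proposal is correct, and for parts $iii)$ and $iv)$ it coincides exactly with the paper's argument: apply Lemma~\ref{lem.Dgeod} with $0<h(d_*)<t$ and divide out using Lemma~\ref{lem.compDa1}$(i)$--$(ii)$.

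For parts $i)$ and $ii)$ you take a different route than the paper. You pass to the inverse curve $\psi=\theta^{-1}$, translate $t<0$ into $s=\theta(t)>h(d)$, and then rerun the parts $iii)$--$iv)$ computation for $\psi$ with all roles swapped. This works, but it is more elaborate than necessary. The paper instead observes that Lemma~\ref{lem.compDa1}$(iii)$--$(iv)$ are stated for \emph{all} $t<h(d_*)$, so in particular they already give closed formulas for $D_{t,h(d_*)}$ and $D_{h(d_*),t}$ when $t<0$. Then Lemma~\ref{lem.Dgeod} with the ordering $t<0<h(d_*)$ yields $D_{0,t}=D_{h(d_*),t}/D_{h(d_*),0}$ and $D_{t,0}=D_{t,h(d_*)}/D_{0,h(d_*)}$, and a one-line simplification finishes. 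Your $\psi$-inversion buys nothing extra here; it just reproduces the same identities after a change of variables. The paper's route avoids the bookkeeping of the swap entirely.
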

\begin{proof} From Lemmas \ref{lem.compDa1} and \ref{lem.Dgeod}, for $t<0$ we have
\begin{equation*}
D_{0, t}=D_{h(d_*), t} / D_{h(d_*),0}=h(d)\left(t D_{d_*,d}+\theta(t)\right)^{-1}
\end{equation*}
and
\begin{equation*}
D_{t,0}=D_{t,h(d_*)} / D_{0,h(d_*)}=h(d)^{-1}(t D_{d,d_*}^{-1}+\theta(t)).
\end{equation*}
Identities $iii)$ and $iv)$ are deduced in an analogous way.
\end{proof}

\begin{proof}[Proof of Proposition \ref{prop.Mancomp}]
Lemmas \ref{lem.compDa1} and \ref{lem.compDa2} imply the result, since from them we can already verify the formulas for $\Delta_{t, 0}$ and $\Delta_{t, h(d_*)}$.   
\end{proof}

\begin{proof}[Proof of Theorem \ref{thm.Mangeo}]
For each $t\in \R$, let $\rho_t=[d_t]$ as above, for which statement $i)$ holds by definition and statement $ii)$ follows from Lemma \ref{lem.Dgeod}. 
For statement $iii)$ we compute
\small\begin{equation*}
\lim_{t\to \infty}{e^{\Delta_{t, h(d_*)}}}=\lim_{t \to \infty}\left(\frac{\theta(t) D_{d_*,d}^{-1}+t}{\theta(t) D_{d,d_*}+t}\right) =\lim_{t \rightarrow \infty}  \left(\frac{1+\frac{\theta(t)}{t} D_{d_*,d}^{-1}}{1+\frac{\theta(t)}{t} D_{d,d_*}}\right)
= \infty,
\end{equation*}
\normalsize
where we used $\lim_{t \rightarrow \infty} \frac{\thet(t)}{t}=-D_{d,d_*}^{-1}$ by \cite[Cor.~3.3]{cantrell.tanaka.Man}. Similarly $\lim _{t \rightarrow-\infty}{\Del_{t, 0}}=\infty$.

Finally, note that $\Delta_{0, t}$ and $\Delta_{h(d_*), t}$ are continuous functions on $t$, so that $\lim _{s \rightarrow t} \Delta_{s, t}=0$ for any $t$. Since $[d]\neq [d_*]$, we have $D_{d,d_*}\cdot D_{d_*,d}>1$ and from Proposition \ref{prop.Mancomp} we deduce that $\Delta_{s, t}>0$ for $s \neq t$, and hence $\rho_\bullet^\thet$ is continuous and injective.
\end{proof}

\begin{remark}\label{rem.asympconst}
From Proposition \ref{prop.Mancomp} we deduce that $$0<t D_{d_*,d}+\theta(t) \leq h(d) \text{ for }t<0, $$
and $$\quad 0<t D_{d,d_*}^{-1}+\theta(t) \leq h(d_*)D_{d,d_*}^{-1} \text{ for }t>h(d_*).$$
Therefore, $$\theta(t)=-t D_{d_*,d}+O(1) \hspace{2mm}\text{  and } \hspace{2mm}\theta(-t)=t D_{d,d_*}^{-1}+O(1) \hspace{2mm}\text{ as }t \rightarrow-\infty,$$
which generalizes \cite[Prop.~4.22]{cantrell.tanaka.Man} to arbitrary pairs of metrics $d, d_{*} \in \calD_{\Gamma}$.
\end{remark}

\subsection{The geodesic bicombing}
As we mentioned in the introduction, by appropriately reparametrizing the curves $\rho_\bullet^{d_*/d}$ given by Theorem \ref{thm.Mangeo} we can produce a geodesic bicombing on $\scrD_\G$ by bi-infinite geodesics.

\begin{definition}\label{def.Manhgeo1} For two distinct metric structures $\rho=[d],\rho_*=[d_*]$ in $\scrD_\G$, the \emph{Manhattan geodesic} of the pair $\rho,\rho_*$ is the map $\sigma^{\rho_*/\rho}_\bullet: \R \ra \scrD_\G$ given by the arc-length reparametrization of the map $\rho_\bullet^{d_*/d}$ such that $\sigma^{\rho_*/\rho}_0=\rho$ and $\sigma^{\rho_*/\rho}_{\Del(\rho,\rho_*)}=\rho_*$.
\end{definition}

More precisely, if $\rho=[d]$ and $\rho_*=[d_*]$, then $\sigma_t^{\rho_*/\rho}$ equals $\rho_{\gam(t)}^{d_*/d}$, where $\gamma(t)$ is the unique number such that \begin{equation}\label{eq.defMgeod}
\Del(\rho,\rho_{\gam(t)})=t \hspace{2mm}\text{ and }\hspace{2mm}t\cdot \gam(t)\geq 0.
\end{equation}

Manhattan geodesics are well-defined, since for $\rho$ and $\rho_*$ as in the preceding definition, the image $\rho^{d_*/d}(\R)\subset \scrD_\G$ and the orientation of the curve $\rho^{d_*/d}_\bullet$ do not depend on the representatives $d$ and $d_*$. We end this section by proving some properties of the geodesic bicombing consisting of the Manhattan geodesics.

\begin{theorem}\label{thm.bicombing}
The geodesic bicombing $(\rho,\rho_*) \mapsto \sigma^{\rho_*/\rho}_\bullet$ satisfies the following.
\begin{itemize}
    \item \emph{Continuity}: if $\rho^n \to \rho$, $\rho^n_* \to \rho_*$ and $\rho \neq \rho_*$ in $\scrD_\G$, then $\sigma^{\rho_*^n/\rho^n}_\bullet$ converges to $\sigma_\bullet^{\rho_*/\rho}$ uniformly on compact subsets of $\R$. 
    \item \emph{$\Out(\G)$-invariance}: $\phi \circ \sigma_\bullet^{\rho_*/\rho}=\sigma_\bullet^{\phi(\rho_*)/\phi (\rho)}$ for any $\phi\in \Out(\G)$ and $\rho\neq \rho_*$.
    \item \emph{Consistency}: if $\rho\neq \rho_*$ and $\tau=\sigma_s^{\rho_*/\rho}, \tau_*=\sigma_{s_*}^{\rho_*/\rho}$  for $s\neq s_*$, then 
    \[
\sigma^{\tau_*/\tau}_t=\sigma^{\rho_*/\rho}_{T(s,s_\ast,t)} \  \text{ where } \ T(s, s_\ast, t) = t\cdot \left(\frac{s_*-s}{|s_\ast-s|}\right)+s \hspace{2mm} \text{ for each } t\in \R.
    \]
\end{itemize}
\end{theorem}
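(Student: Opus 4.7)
The three properties will be established in order of increasing subtlety, each reducing to the explicit formulas of Proposition \ref{prop.Mancomp} together with a small additional input about the Manhattan construction.

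For $\Out(\G)$-invariance, fix $\Phi\in\Aut(\G)$ representing $\phi$. Since $\Phi$ fixes the identity, $\Phi^{*}d(x,y):=d(\Phi x,\Phi y)$ belongs to $\calD_\G$ whenever $d$ does; the substitution $y=\Phi(x)$ in the defining series of the Manhattan curve gives $\theta_{\Phi^{*}d_{*}/\Phi^{*}d}=\theta_{d_{*}/d}$, and $\ell_{\Phi^{*}d}[x]=\ell_{d}[\Phi x]$ yields $\Aut(\G)$-invariance of $\Dil$. Hence $\phi$ is an isometry of $(\scrD_\G,\Del)$ and $\Phi^{*}d^{\theta}_{t}$ represents $\phi([d^{\theta}_{t}])$. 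Arc-length reparametrization commutes with isometric actions, giving $\phi\circ\sigma^{\rho_{*}/\rho}_\bullet=\sigma^{\phi(\rho_{*})/\phi(\rho)}_\bullet$.

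For consistency, pick $r\neq t$ in $\R$ with $\rho^{d_{*}/d}_{r}=\tau$ and $\rho^{d_{*}/d}_{t}=\tau_{*}$, and set $d_r:=d^{\theta}_{r}$, $d_t:=d^{\theta}_{t}$, $\psi:=\theta_{d_t/d_r}$. Exactly as in the proof of Lemma \ref{lem.Dgeod},
\[ d^{\psi}_{a}\eqsim (at+\psi(a)r)\,d_{*}+(a\theta(t)+\psi(a)\theta(r))\,d \]
for every $a\in\R$, and comparing the Manhattan curves of the pairs $(d_{*},d)$ and $(d_t,d_r)$, related by the affine change of coordinates $(p,q)\mapsto(pt+qr,\,p\theta(t)+q\theta(r))$, forces $\theta(\lambda(a))=a\theta(t)+\psi(a)\theta(r)$ with $\lambda(a):=at+\psi(a)r$. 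Thus $d^{\psi}_{a}$ is roughly similar to $d^{\theta}_{\lambda(a)}$, so the image of $\rho^{d_t/d_r}_\bullet$ sits inside that of $\rho^{d_{*}/d}_\bullet$ as a subset of $\scrD_\G$. Passing to arc-length parameterizations yields a continuous map $f\colon\R\to\R$ with $\sigma^{\tau_{*}/\tau}_{t}=\sigma^{\rho_{*}/\rho}_{f(t)}$; both being arc-length parameterizations forces $f$ to be an isometry of $\R$, hence affine of slope $\pm 1$. The endpoint conditions $f(0)=s$ and $f(\Del(\tau,\tau_{*}))=s_{*}$, together with $\Del(\tau,\tau_{*})=|s-s_{*}|$ coming from the additive property of Theorem \ref{thm.Mangeo}, pin down $f(t)=T(s,s_{*},t)$.

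For continuity, given $\rho^{n}=[d^{n}]\to\rho=[d]$ and $\rho^{n}_{*}=[d^{n}_{*}]\to\rho_{*}=[d_{*}]$ in $(\scrD_\G,\Del)$, choose representatives for which the pairwise dilation constants tend to $1$. Two ingredients then suffice: (a) continuity of the Manhattan-curve map $(d,d_{*})\mapsto\theta_{d_{*}/d}$ uniformly on compact subsets of $\R$; and (b) continuity of $\Dil(d,d_{*})$ and $\Dil(d_{*},d)$. Both follow from their defining formulas, namely abscissa of convergence and stable translation length, together with the control on representatives, with convexity of $\theta$ upgrading pointwise convergence to uniform convergence on compacts. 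The explicit expressions of Proposition \ref{prop.Mancomp} then show that $\Del(\rho^{n},\rho^{d^{n}_{*}/d^{n}}_{t})\to\Del(\rho,\rho^{d_{*}/d}_{t})$ uniformly for $t$ in compact sets, and strict monotonicity of $t\mapsto\Del(\rho,\rho^{d_{*}/d}_{t})$ ensures that the arc-length inverses defining $\sigma^{\rho^{n}_{*}/\rho^{n}}_\bullet$ converge uniformly on compacts to the one defining $\sigma^{\rho_{*}/\rho}_\bullet$. The main obstacle I expect lies in ingredient (a): securing a clean notion of continuity of the Manhattan-curve assignment in a topology on $\calD_\G$ lifting $\Del$; once this is in hand, Proposition \ref{prop.Mancomp} makes the remainder mechanical.
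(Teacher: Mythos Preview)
Your approach matches the paper's in all three parts. For $\Out(\G)$-invariance and consistency you argue exactly as the paper does: the Manhattan curve is preserved by pullback along automorphisms, and any two points on a Manhattan geodesic determine the same image curve (the paper phrases this via explicit coefficients, you via the reparametrization $\lambda$ from Lemma~\ref{lem.Dgeod}, but the content is identical). Your deduction of the affine form of $f$ from the arc-length property is clean and equivalent to what the paper does implicitly.

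For continuity, you correctly identify the only nontrivial ingredient as the locally uniform convergence $\theta_{d_{*}^{n}/d^{n}}\to\theta_{d_{*}/d}$. The paper handles this not by your dilation normalization but by rescaling all representatives to have exponential growth rate~$1$ and then invoking the proof of \cite[Thm.~1.9]{oregon-reyes.ms}; once that is in place the paper goes directly to $\rho^{n}_{\bullet}\to\rho_{\bullet}$ without passing through Proposition~\ref{prop.Mancomp}. Your route via the explicit dilation formulas also works and makes the dependence on $\theta$ more visible, but the step you flag as the ``main obstacle'' is precisely the one the paper outsources to an external reference rather than proving from scratch.
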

 
In the last point of this theorem, consistency refers to the following property. Suppose that $\sigma_\bullet^{\rho_\ast/\rho}$ is the bi-infinite geodesic joining $\rho$ and $\rho_\ast$ that we constructed above. Then,  if we pick two metric structures $\tau, \tau_\ast$ lying on $\sigma_\bullet^{\rho_\ast/\rho}$ and construct the bi-inifinite  geodesic joining $\tau, \tau_\ast$, then we obtain the same bi-inifinite geodesic $\sigma_\bullet^{\rho_\ast/\rho}$ up to reparameterisation.

\begin{proof}
    The bicombing satisfies $\Out(\G)$-invariance, since for any $d,d_*\in \calD_\G$ and $\phi \in\Aut(\G)$ we have $\rho_\bullet^{\phi(d_*)/\phi(d)}=\phi \circ \rho_\bullet^{d_*/d}$.

    To prove continuity, consider sequences $\rho^n=[d^n]$ and $\rho^n_*=[d^n_*]$ in $\scrD_\G$ converging to $\rho=[d]$ and $\rho_*=[d_*]$ as $n$ tends to infinity, respectively. We can assume that $d,d_*,d^n$, and $d^n_*$ have exponential growth rates equal to 1 for all $n$. Under this assumption, if we let $\thet_n=\thet_{d^n_*/d^n}$ and $\thet=\thet_{d_*/d}$ then $\thet_n$ converges to $\thet$ uniformly on compact subsets of $\R$, see the proof of \cite[Thm.~1.9]{oregon-reyes.ms}. From this we deduce that if $\rho^n_\bullet=\rho_\bullet^{d^n_*/d^n}$, then $\rho^n_\bullet$ converges to $\rho_\bullet$ uniformly on compact subsets of $\R$. Continuity follows from this property and \eqref{eq.defMgeod}.
    
    Finally, consistency follows from the fact that if $\rho=[d] \neq \rho_*=[d_*]\in \scrD_\G$ and $\tau \neq \tau_*\in \sigma^{\rho_*/\rho}_\bullet$, then the curves $\sigma^{\tau_*/\tau}_\bullet$ and $\sigma^{\rho_*/\rho}_\bullet$ have the same image in $\scrD_\G$. To prove this fact suppose that $\tau=[d_s]=\rho_s^{d_*/d}$ and $\tau_*=[d_{s_\ast}]=\rho_{s_*}^{d_*/d}$ for some $s\neq s_*$.
    Then, as shown in Lemma \ref{lem.reparam} there is a bijection $\alpha(t)$ such that $\sigma^{\tau_\ast/\tau}_t = \sigma^{\rho_\ast/\rho}_{\alpha(t)}$ for all $t \in \R$.
This concludes the proof of the fact, and hence the theorem.
\end{proof}


\section{The Manhattan Boundary}\label{sec:manhattanboundary}
In this section, we discuss the Manhattan boundary of $\scrD_\G$ and prove theorem \ref{thm.Mboundaryvisible}, which characterizes its elements as the limits at infinity of Manhattan geodesics. 

As in the previous section, let $d,d_*\in \calD_\G$ be a pair of non-roughly similar pseudo metrics, let $\thet$ be its Manhattan curve, and $t\mapsto \rho_t=[d_t]$ be the reparametrization of the Manhattan geodesic for $\rho=[d],\rho_*=[d_*]$ defined in terms of $\thet$. We keep the notation $D_{d,d_*}=\Dil(d,d_*)$ and $D_{d_*,d}=\Dil(d_*,d)$.

\begin{proposition}\label{prop.d+-inf}
There are left-invariant pseudo metrics $d_{-\infty}=d_{-\infty}^\thet$ and $d_{\infty}=d_\infty^\thet$ on $\G$ and a constant $C\geq 0$ such that 
\begin{equation}\label{eq.d+inf}
    |d_\infty-(D_{d,d_*}d_*-d)|\leq C \ \text{ and } \  
    |d_{-\infty}-(D_{d_*,d}d-d_*)|\leq C.
\end{equation}
The pseudo metrics $d_{\infty}$ and $d_{-\infty}$ satisfy:
\begin{enumerate}
   \item
    $\displaystyle\ell_\infty:=\ell_{d_{\infty}}=\lim_{t \to \infty}{\frac{1}{-\thet(t)}\ell_{d_t}} \hspace{2mm}\text{ and }\hspace{2mm}\displaystyle\ell_{-\infty}:=\ell_{d_{-\infty}}=\lim_{t \to -\infty}{\frac{1}{-t}\ell_{d_t}}; \text{ and,}$ 
    \item they both belong to $\partial_M \calD_\G$.
\end{enumerate}
\end{proposition}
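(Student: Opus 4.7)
My strategy is to define $d_\infty$ as a pseudo metric differing from the formal expression $D_{d,d_*}d_* - d$ by a large enough additive constant, and then verify each claim directly; the case of $d_{-\infty}$ is entirely analogous after swapping the roles of $d$ and $d_*$. Explicitly, I would set $d_\infty(x,y) = D_{d,d_*}d_*(x,y) - d(x,y) + C'$ when $x \neq y$ and $d_\infty(x,x) = 0$, for a constant $C'$ to be fixed. Symmetry and $\G$-invariance are inherited from $d$ and $d_*$. Non-negativity reduces to $d \leq D_{d,d_*}d_* + \text{const}$, which is supplied by Theorem \ref{thm.Dildistancelike}, so it suffices to take $C'$ larger than this additive error. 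The decisive step is the triangle inequality, which rearranges to
\[
2(x|z)_{y,d} - 2D_{d,d_*}(x|z)_{y,d_*} \leq C';
\]
after translating by $y^{-1}$ to bring the basepoint to $o$, this is exactly the content of Theorem \ref{thm.dilatta}, yielding a uniform upper bound on the left-hand side that permits the choice of $C'$. The resulting $d_\infty$ is then a left-invariant pseudo metric satisfying \eqref{eq.d+inf}.

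Next I would establish (1). Proposition \ref{prop.da} yields $|d_t - (td_* + \theta(t)d)| \leq C_t$, and dividing by $n$ in the defining limit for the stable translation length gives $\ell_{d_t}[x] = t\ell_{d_*}[x] + \theta(t)\ell_d[x]$ for every $x \in \G$. Thus
\[
\frac{1}{-\theta(t)}\ell_{d_t}[x] = \frac{t}{-\theta(t)}\ell_{d_*}[x] - \ell_d[x].
\]
Using the asymptote $\theta(t)/t \to -D_{d,d_*}^{-1}$ as $t \to \infty$, already invoked in the proof of Theorem \ref{thm.Mangeo} via \cite[Cor.~3.3]{cantrell.tanaka.Man}, the coefficient $t/(-\theta(t))$ tends to $D_{d,d_*}$, so the limit equals $D_{d,d_*}\ell_{d_*}[x] - \ell_d[x]$. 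This agrees with $\ell_{d_\infty}[x]$ since stable translation length is unaffected by bounded additive perturbations. The formula for $\ell_{-\infty}$ is analogous, using $\theta(t)/(-t) \to D_{d_*,d}$ from Remark \ref{rem.asympconst}.

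Finally I would show $d_\infty \in \partial_M\calD_\G$. If $\ell_{d_\infty}$ were constant (hence identically zero, as it vanishes at the identity), then $\ell_d = D_{d,d_*}\ell_{d_*}$ on all conjugacy classes, and the weak marked length spectrum rigidity consequence of Theorem \ref{thm.dilatta} noted in the introduction would force $d$ and $D_{d,d_*}d_*$, and therefore $d$ and $d_*$, to be roughly similar, contradicting the hypothesis. The inequality \eqref{eq.ineqdefboundarymetric} with $d_0 = d_*$ follows from the direct computation $(x|y)_{o,d_\infty} = D_{d,d_*}(x|y)_{o,d_*} - (x|y)_{o,d} + C'/2$ together with $(x|y)_{o,d} \geq 0$. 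For non-membership in $\calD_\G$, I would pick $[x_n] \in \conj'$ with $\ell_d[x_n]/\ell_{d_*}[x_n] \to D_{d,d_*}$, as guaranteed by the definition of the dilation as a supremum; then $\ell_{d_\infty}[x_n]/\ell_{d_*}[x_n] \to 0$, whereas a quasi-isometry between $d_\infty$ and a word metric (and hence between $d_\infty$ and $d_*$) would force $\ell_{d_\infty} \geq c\ell_{d_*}$ for some $c > 0$, a contradiction. The principal obstacle across these steps is the triangle inequality for $d_\infty$, which depends crucially on the Gromov-product strengthening of Theorem \ref{thm.dilatta} transplanted to arbitrary basepoints through $\G$-invariance, rather than on the weaker distance-level estimate.
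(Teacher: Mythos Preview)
Your proposal is correct and follows essentially the same approach as the paper: the same additive-constant construction of $d_\infty$ from $D_{d,d_*}d_*-d$, the same reduction of the triangle inequality to the Gromov-product estimate of Theorem~\ref{thm.dilatta}, the same limit computation for $\ell_{d_t}/(-\theta(t))$ via the asymptote of $\theta$, and the same sequence argument with $\ell_d[x_n]/\ell_{d_*}[x_n]\to D_{d,d_*}$ to rule out membership in $\calD_\G$. The only cosmetic difference is that the paper phrases non-constancy of $\ell_{d_\infty}$ directly (``$d,d_*$ not roughly similar'') rather than invoking marked length spectrum rigidity, but these are the same observation.
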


\begin{proof}
By Theorem \ref{thm.dilatta}, there is a constant $C'\geq 0$ such that 
\begin{equation}
    D_{d,d_*}^{-1}(x|y)_{o,d}-D_{d,d_*}^{-1}C'\leq (x|y)_{o,d_*} \leq D_{d_*,d} (x|y)_{o,d}+C'
\end{equation}
for all $x,y\in \G$. Therefore, as in the proof of Proposition \ref{prop.da}, the functions
\begin{equation*}
d_{\infty}(x, y):=\begin{cases}
D_{d,d_*}d_*(x,y)-d(x,y)+2C' & \text { if } x \neq y \\
0 & \text { otherwise }
\end{cases}
\end{equation*}
and
\begin{equation*}
d_{-\infty}(x, y):=\begin{cases}
D_{d_*,d}d(x,y)-d_*(x,y)+2C' & \text { if } x \neq y \\
0 & \text { otherwise }
\end{cases}
\end{equation*}
define left-invariant pseudo metrics on $\G$ verifying \eqref{eq.d+inf} with $C=2C'$.

Now we check the desired properties for $d_{-\infty}$ and $d_\infty$.

First, we compute
$$\lim_{t \to \infty}{\frac{1}{-\thet(t)}\ell_{d_t}}=\lim_{t \to \infty}{\frac{(t\ell_{d_*}+\theta(t)\ell_{d})}{-\thet(t)}}=D_{d,d_*}\ell_{d_*}-\ell_{d},$$
where we use $\lim_{t \to \infty}\frac{t}{-\thet(t)}=D_{d,d_*}$ \cite[Cor.~3.3]{cantrell.tanaka.Man}. Similarly, the identity $\lim_{t \to -\infty}\frac{\thet(t)}{-t}=D_{d_*,d}$ gives the analogous result for $\ell_{-\infty}$. The functions $\ell_\infty$ and $\ell_{-\infty}$ are non-constant since $d$ and $d_*$ are not roughly isometric, and hence $d_\infty$ and $d_{-\infty}$ satisfy (1).

In addition, we have
\begin{equation}\label{eq.groineqd+-inf}
(x|y)_{o,d_{\infty}}\leq D_{d,d_*}(x|y)_{o,d_*}+C \hspace{2mm}\text{ and }\hspace{2mm}(x|y)_{o,d_{-\infty}}\leq D_{d_*,d}(x|y)_{o,d}+C,
\end{equation}
so $d_{\infty}$ and $d_{-\infty}$ belong to $\ov\calD_\G$.

Finally, by the definition of $D_{d,d_*}$ there is a sequence $x_n \in \G$ such that 
\[
\ell_{d}[x_n]\geq \left(D_{d,d_*}-\frac{1}{n}\right)\ell_{d_*}[x_n]>0
\]
for all $n$, and hence 
$$\ell_{\infty}[x_n]=D_{d_*,d}\ell_{d_*}[x_n]-\ell_{d}[x_n]\leq \frac{1}{n}
\ell_{d_*}[x_n].$$

This implies that $d_\infty$ is not quasi-isometric to $d_1$. Similarly, $d_{-\infty}$ is not quasi-isometric to $d_0$, which proves that $d_{-\infty},d_{\infty}\in \partial_M \calD_\G$, and hence (2).
\end{proof}

Recall from Definition \ref{def.limitsofManhattan} in the introduction that if $\sigma$ is the Manhattan geodesic for the pair $\rho=[d],\rho_*=[d_*]$ with $\rho\neq \rho_*$, the limit at infinity of $\sigma$ is the boundary metric structure $\sigma_\infty=\sigma^{\rho_*/\rho}_\infty$ represented by pseudo metrics roughly similar to $\Dil(d,d_*)d_*-d$. The limit at negative infinity of $\sigma$, denoted $\sigma_{-\infty}=\sigma_{-\infty}^{\rho_*/\rho}$,  is defined similarly.

\begin{theorem}\label{thm.Mboundaryvisible}
    For any $\rho\in \scrD_\G$ and $\rho_\infty\in \partial_M\scrD_\G$, there exists some $\rho_*\in \scrD_\G$ such that $\rho_\infty=\sigma_\infty^{\rho_*/\rho}$. Moreover, if $\rho'_*\in \scrD_\G$ satisfies $\rho_\infty=\sigma_\infty^{\rho'_*/\rho}$ then $\rho_*'\in \sigma^{\rho_*/\rho}(0,\infty)$.
\end{theorem}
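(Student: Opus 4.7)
For existence, my plan is to construct $\rho_* = [d_*]$ by setting $d_* := d + d_\infty$, where $d \in \calD_\G$ represents $\rho$ and $d_\infty \in \partial_M \calD_\G$ represents $\rho_\infty$. First I would verify $d_* \in \calD_\G$. The function $d_*$ is manifestly a left-invariant pseudo metric, and the triangle inequality and $\G$-invariance of $d_\infty$ give $d_\infty(x, y) \leq \bigl(\max_{s \in S} d_\infty(o, s)\bigr)\,|x^{-1}y|_S$ for any finite symmetric generating set $S$; combined with the trivial lower bound $d_* \geq d$, this shows $d_*$ is quasi-isometric to a word metric. Adding the Gromov products $(x|y)_{o, d_*} = (x|y)_{o, d} + (x|y)_{o, d_\infty}$ together with the defining inequality \eqref{eq.ineqdefboundarymetric} for $d_\infty$ shows $d_* \in \ov\calD_\G$, so Remark \ref{rmk.hdlfhyp} places $d_*$ in $\calD_\G$. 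Also $[d_*] \neq [d]$, since $|\tau d - d_*| \leq C$ for some $\tau > 0$ and $C \geq 0$ would yield $|(\tau - 1)d - d_\infty| \leq C$, forcing either $d_\infty$ bounded (if $\tau = 1$, contradicting non-constancy of $\ell_{d_\infty}$) or $d_\infty$ roughly similar to $d$ (if $\tau \neq 1$, contradicting $d_\infty \notin \calD_\G$).

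The central calculation is $\Dil(d, d_*) = 1$. Since $\ell_{d_*}[x] = \ell_d[x] + \ell_{d_\infty}[x]$ and $\ell_d[x] > 0$ on $\conj'$, this reduces to showing $\inf_{[x] \in \conj'} \ell_{d_\infty}[x]/\ell_d[x] = 0$. If instead $\ell_{d_\infty}[x] \geq \epsilon\, \ell_d[x]$ uniformly for some $\epsilon > 0$, then $\Dil(d, d_\infty) \leq 1/\epsilon$, and Theorem \ref{thm.Dildistancelike} applied to the hyperbolic distance-like function $d_\infty$ yields a constant $C'$ with $d(x, y) \leq (1/\epsilon) d_\infty(x, y) + C'$. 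Together with the complementary bound $d_\infty \leq M\,|x^{-1}y|_S \leq M'\, d + M'$ established above, this exhibits $d$ and $d_\infty$ as quasi-isometric, placing $d_\infty \in \calD_\G$ by Remark \ref{rmk.hdlfhyp} and contradicting $d_\infty \in \partial_M \calD_\G$. Hence $\Dil(d, d_*) = 1$, so $\Dil(d, d_*)\, d_* - d = d_\infty$, and Definition \ref{def.limitsofManhattan} yields $\sigma_\infty^{\rho_*/\rho} = \rho_\infty$.

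For uniqueness, suppose $\rho'_* = [d'_*] \in \scrD_\G$ also satisfies $\sigma_\infty^{\rho'_*/\rho} = \rho_\infty$. Then both $\Dil(d, d'_*)\,d'_* - d$ and $\Dil(d, d_*)\,d_* - d$ are roughly similar to $d_\infty$, hence to each other, yielding some $\tau > 0$ and $C \geq 0$ with
\begin{equation*}
\bigl|\Dil(d, d'_*)\,d'_* - \tau\Dil(d, d_*)\,d_* - (1-\tau)\,d\bigr| \leq C.
\end{equation*}
Dividing by $\Dil(d, d'_*)$ shows that $d'_*$ is roughly isometric to $A\,d_* + B\,d$ with $A := \tau\Dil(d, d_*)/\Dil(d, d'_*) > 0$ and $B := (1 - \tau)/\Dil(d, d'_*)$, so $[d'_*] = \rho_t^{d_*/d}$ provided the projective point $(A : B)$ coincides with $(t : \thet(t))$ for some $t > 0$. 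The map $t \mapsto \thet(t)/t$ strictly decreases on $(0, \infty)$ (as in the proof of Lemma \ref{lemma.D0apos}), from $+\infty$ at $t \to 0^+$ to $-1/\Dil(d, d_*)$ at $t \to \infty$ (by Remark \ref{rem.asympconst}). Hence for $\tau \in (0, 1)$, the ratio $B/A \in (0, +\infty)$ corresponds to some $t \in (0, h(d_*))$; for $\tau = 1$, $B = 0$ corresponds to $t = h(d_*)$; and for $\tau > 1$, $B/A \in (-1/\Dil(d, d_*), 0)$ corresponds to some $t \in (h(d_*), \infty)$. In every case $\rho'_* = \rho_t^{d_*/d}$ with $t > 0$, and the arc-length reparametrization gives $\rho'_* \in \sigma^{\rho_*/\rho}(0, \infty)$.

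The principal obstacle will be the $\Dil(d, d_*) = 1$ calculation, which crucially uses the failure of $d_\infty$ to be quasi-isometric to a word metric through a delicate application of Theorem \ref{thm.Dildistancelike}.
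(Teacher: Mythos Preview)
Your proof is correct and follows essentially the same approach as the paper: construct $d_* = d + d_\infty$, verify it lies in $\calD_\G$ (the paper packages this as Lemma \ref{lem.int+bounr=int}), establish $\Dil(d,d_*) = 1$ by using Theorem \ref{thm.Dildistancelike} to derive a contradiction with $d_\infty \notin \calD_\G$, and for uniqueness express $d'_*$ as a positive combination $A d_* + B d$ to locate it on the Manhattan curve. The only cosmetic difference is in that last step: the paper normalizes so that $h(\tilde d_*)\tilde d_*$ has exponential growth rate $1$ and reads off $t$ directly, whereas you match the ratio $B/A$ to $\thet(t)/t$ via its strict monotonicity on $(0,\infty)$---both identify the same point.
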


We need a preliminary lemma. 

\begin{lemma}\label{lem.int+bounr=int}
    If $d\in \calD_{\G}$ and $d_\infty\in \partial_M \calD_\G$, then $d+d_\infty \in \calD_\G$.
\end{lemma}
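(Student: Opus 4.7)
The plan is to verify directly that $d+d_\infty$ satisfies the three defining properties of $\calD_\G$: left-invariance (obvious from summing), quasi-isometry to a word metric, and hyperbolicity.

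For the quasi-isometry part, I would first observe that any left-invariant pseudo metric $\psi$ on the finitely generated group $\G$ is Lipschitz with respect to a word metric $d_S$: writing $x=s_1\cdots s_n$ with $n=|x|_S$ and using left-invariance and the triangle inequality gives $\psi(o,x)\leq L|x|_S$ with $L=\max_{s\in S}\psi(o,s)$. Applied to $\psi=d_\infty$ and combined with the quasi-isometry between $d$ and $d_S$, this produces an upper bound of the form $d+d_\infty\leq \lambda\, d+C$. Since trivially $d\leq d+d_\infty$, we obtain that $d+d_\infty$ is quasi-isometric to $d$, hence to any word metric.

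For hyperbolicity, the cleanest route is to invoke Remark \ref{rmk.hdlfhyp}: it suffices to check that $d+d_\infty$ lies in $\ov\calD_\G$, because a member of $\ov\calD_\G$ that is quasi-isometric to a word metric automatically belongs to $\calD_\G$. The Gromov product is additive in the underlying pseudo metric, so
\[
(x|y)_{o,d+d_\infty}=(x|y)_{o,d}+(x|y)_{o,d_\infty}.
\]
The definition of $\partial_M\calD_\G$ applied to $d_\infty$ yields constants $\lam>0$ and a reference $d_0\in \calD_\G$ with $(x|y)_{o,d_\infty}\leq \lam(x|y)_{o,d_0}+\lam$. Since $d$ and $d_0$ are quasi-isometric roughly geodesic hyperbolic pseudo metrics, Proposition \ref{prop.qiGromovprod} allows us to replace $d_0$ by $d$ in this estimate up to a change of constants, and combining with the trivial bound for the $(x|y)_{o,d}$ term gives a bound of the required form with reference pseudo metric $d$ itself. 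Non-constancy of $\ell_{d+d_\infty}$ follows from $\ell_{d+d_\infty}\geq \ell_d$, the latter being non-constant as $d\in \calD_\G$.

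This completes the verification that $d+d_\infty\in \ov\calD_\G$ and is quasi-isometric to a word metric, whence $d+d_\infty\in \calD_\G$ by Remark \ref{rmk.hdlfhyp}. There is no real obstacle here; the only delicate point is ensuring one can transfer the Gromov product inequality from the reference metric $d_0$ to $d$, which is exactly what Proposition \ref{prop.qiGromovprod} provides because both metrics are roughly geodesic.
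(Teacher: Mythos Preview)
Your proof is correct and follows essentially the same route as the paper's: establish the Gromov product bound for $d+d_\infty$ (placing it in $\ov\calD_\G$), verify quasi-isometry to a word metric via the Lipschitz bound for $d_\infty$, and conclude hyperbolicity through Remark~\ref{rmk.hdlfhyp}. Your argument is in fact more explicit than the paper's, spelling out the transfer of reference metric via Proposition~\ref{prop.qiGromovprod} and the non-constancy of $\ell_{d+d_\infty}$, both of which the paper leaves implicit.
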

\begin{proof}
Clearly $d+d_\infty$ is a left-invariant pseudo metric on $\G$. It also satisfies \eqref{eq.ineqdefboundarymetric} for some $\lam>0$ and $d_0\in \calD_\G$, since $d$ and $d_\infty$ do. Therefore, $d+d_\infty$ is roughly geodesic by Lemma \ref{lem.RGforHDLF}. It is also quasi-isometric to a word metric since it is bounded below by $d\in \calD_\G$, so it is hyperbolic by Remark \ref{rmk.hdlfhyp}. This concludes the proof of the lemma.
\end{proof}

For the rest of the subsection we use the notation `$\lesssim, \eqsim$' introduced right before Lemma \ref{lem.compDa1}.

\begin{proof}[Proof of Theorem \ref{thm.Mboundaryvisible}]
Let $\rho_\infty=[d_\infty]$ and $\rho=[d]$. Define $d_*:=d+d_\infty$, which is a pseudo metric in $\calD_{\G}$ by Lemma \ref{lem.int+bounr=int}.
Since $d_\infty \in \partial_M \calD_\G$, we have that $d$ and $d_*$ are not roughly similar. 

We claim that $\Dil(d,d_*)=1$. Indeed, since $d=d_*-d_\infty\leq d_*,$ we get $\Dil(d,d_*)\leq 1$. In addition, by Theorem \ref{thm.dilatta} there is some $C\geq 0$ such that 
$$(1-\Dil(d,d_*))d_*\leq d_*-d+C=d_\infty+C,$$ 
and since $d_\infty$ is not quasi-isometric to a word metric, we get $\Dil(d,d_*)\geq 1$.

Therefore, by our claim we deduce $d_\infty=\Dil(d,d_*)d_*-d$, and $\rho_\infty=\sigma^{\rho_*/\rho}_{\infty}$ for $\rho_*=[d_*]$.

Finally, suppose that $\rho_\infty=\sigma_\infty^{\tilde{\rho}_*/\rho}$ for some $\tilde\rho_*=[\tilde{d}_*]$. Then there exists $\lam>0$ such that 
$$\lam(d_*-d) =\lam d_\infty \eqsim \Dil(d,\tilde{d}_*)\tilde{d}_*-d.$$
We get
$$h(\tilde{d}_*)\tilde{d}_* \eqsim h(\tilde{d}_*)\lam\Dil(d,\tilde{d}_*)^{-1}d_*+h(\tilde{d}_*)(1-\lam)\Dil(d,\tilde{d}_*)^{-1}d,$$
and we conclude $\tilde{\rho}_*=\rho^{d_*/d}_t$ for $t=h(\tilde{d}_*)\lam \Dil(d,\tilde{d}_*)^{-1}>0$, so that $\tilde{\rho}_*\in \sigma^{\rho_*/\rho}(0,\infty)$.
\end{proof}

From the proof of Theorem \ref{thm.Mboundaryvisible} we deduce:
\begin{corollary}\label{coro.dinftisdifference}
    For any $d_\infty\in \partial_M\calD_\G$ and $d\in \calD_\G$ there exists $d_*\in \calD_\G$ such that 
    $$d_\infty=\Dil(d,d_*)d_*-d.$$
\end{corollary}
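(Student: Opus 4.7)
The plan is to reuse the construction from the proof of Theorem \ref{thm.Mboundaryvisible} and extract the exact identity claimed here. Given $d \in \calD_\G$ and $d_\infty \in \partial_M \calD_\G$, I would set $d_* := d + d_\infty$, which is clearly a left-invariant pseudo metric on $\G$. Lemma \ref{lem.int+bounr=int} then gives $d_* \in \calD_\G$ directly, so the candidate metric is well-defined.

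The key step is to show that $\Dil(d, d_*) = 1$, since then the identity $\Dil(d, d_*) d_* - d = d_* - d = d_\infty$ holds as an exact equality by construction. The upper bound $\Dil(d, d_*) \leq 1$ follows at once from the pointwise inequality $d \leq d_*$, which yields $\ell_d[x] \leq \ell_{d_*}[x]$ for all $[x] \in \conj'$. For the matching lower bound, I would argue by contradiction: assume $\Dil(d, d_*) < 1$, apply Theorem \ref{thm.dilatta} to obtain a constant $C \geq 0$ with $d \leq \Dil(d, d_*) d_* + C$, and rearrange using $d_\infty = d_* - d$ to get
\[
(1 - \Dil(d, d_*))\, d_* \leq d_\infty + C.
\]
Combined with the trivial bound $d_\infty \leq d_*$, this sandwiches $d_\infty$ between two pseudo metrics each comparable to a word metric, forcing $d_\infty$ to be quasi-isometric to a word metric and contradicting the hypothesis $d_\infty \in \partial_M \calD_\G = \ov\calD_\G \setminus \calD_\G$.

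The main (and really only) obstacle is the lower bound $\Dil(d, d_*) \geq 1$, which is the one place where the full strength of Theorem \ref{thm.dilatta} and the defining property of boundary metric structures enter. Everything else is bookkeeping; notice in particular that the identity $d_\infty = d_* - d$ holds on the nose (not merely up to a bounded error), so once $\Dil(d, d_*) = 1$ is established the conclusion is \emph{exact}, with no additive constant to absorb. Thus the corollary is essentially a repackaging of the construction already carried out inside the proof of Theorem \ref{thm.Mboundaryvisible}.
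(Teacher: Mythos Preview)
Your proposal is correct and follows exactly the same route as the paper: the paper simply states that the corollary follows from the proof of Theorem \ref{thm.Mboundaryvisible}, which is precisely the construction you describe (set $d_*:=d+d_\infty$, invoke Lemma \ref{lem.int+bounr=int}, prove $\Dil(d,d_*)=1$ using $d\leq d_*$ for the upper bound and Theorem \ref{thm.dilatta} for the lower bound). The only cosmetic point is that the inequality $d\leq \Dil(d,d_*)d_*+C$ is literally Theorem \ref{thm.Dildistancelike} rather than Theorem \ref{thm.dilatta}, but the paper itself makes the same citation choice in the proof of Theorem \ref{thm.Mboundaryvisible}, so this is consistent.
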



We end this section by characterising when two boundary metric structures are the positive and negative limits of a Manhattan geodesic.
\begin{definition}\label{def.transverse}
    Two boundary metric structures $\rho,\rho_* \in \partial_M\scrD_\G$ are \emph{transverse} if for some (any) $d\in \rho$ and $d_*\in \rho_*$ we have $d+d_*\in \calD_\G$. 
\end{definition}
The equivalence of `some' and `any' in the above definition follows from the fact that if $d \in \rho, d_\ast \in \rho_\ast$ with $d+d_\ast \in \Dc_\G$ then for any $a > b >0$ we have that $ad+bd_\ast = b(d+ d_\ast) + (a-b)d$ (which belongs to $\Dc_\G$). 
\begin{proposition}\label{prop.chartransverse}
    The boundary metric structures $\rho,\rho_*\in \partial_M \scrD_\G$ are transverse if and only if there is a Manhattan geodesic $\sigma_\bullet$ such that $\rho=\sigma_{-\infty}$ and $\rho_*=\sigma_{\infty}$.
\end{proposition}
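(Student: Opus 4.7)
The plan is to address the two implications separately, both relying on Proposition \ref{prop.d+-inf} which identifies the limits of a Manhattan geodesic for a pair $d_1, d_1^* \in \calD_\G$ as rough similarity classes of $\Dil(d_1^*, d_1) d_1 - d_1^*$ and $\Dil(d_1, d_1^*) d_1^* - d_1$.

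For the ``$\Leftarrow$'' implication, write $\sigma_\bullet = \sigma^{[d_1^*]/[d_1]}_\bullet$ for some $d_1, d_1^* \in \calD_\G$. Since transversality of $\rho, \rho_*$ does not depend on the choice of representatives (rescaling one by $\tau>0$ produces a quasi-isometric sum), we may select $d \in \rho$ and $d_* \in \rho_*$ with $d = Bd_1 - d_1^* + O(1)$ and $d_* = Ad_1^* - d_1 + O(1)$, where $A := \Dil(d_1, d_1^*)$ and $B := \Dil(d_1^*, d_1)$. The identity $Ad + d_* = (AB-1)d_1 + O(1)$, together with $Ad + d_* \leq (A+1)(d + d_*)$, produces the lower bound $d + d_* \gtrsim d_1 \gtrsim d_{S_0}$ for any word metric $d_{S_0}$, while $d + d_* \leq Bd_1 + Ad_1^* + O(1) \lesssim d_{S_0}$ is immediate. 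As $d + d_*$ is a sum of hyperbolic distance-like functions, it is itself one, so by Lemma \ref{lem.RGforHDLF} together with Remark \ref{rmk.hdlfhyp}, $d + d_* \in \calD_\G$.

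For the converse, set $d_1 := d + d_*$ and $d_1^* := d + 2d_*$; both lie in $\calD_\G$ by Lemma \ref{lem.int+bounr=int}. The central technical step, and the main obstacle of the proof, is the auxiliary claim
\[
u_{\min} := \inf_{[x]\in \conj'}\frac{\ell_{d_*}[x]}{\ell_d[x]} = 0 \quad \text{and} \quad u_{\max} := \sup_{[x] \in \conj'}\frac{\ell_{d_*}[x]}{\ell_d[x]} = \infty.
\]
If $u_{\min} > 0$, then $\Dil(d, d_*) \leq 1/u_{\min} < \infty$, so Theorem \ref{thm.Dildistancelike} yields $d \leq (1/u_{\min})\,d_* + O(1)$. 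Combined with the trivial $d_* \leq d + d_*$ and the fact that $d + d_*$ is quasi-isometric to a word metric $d_{S_0}$, one gets $d_* \gtrsim d_{S_0}$ as well as $d_* \lesssim d_{S_0}$, so $d_*$ is quasi-isometric to a word metric. Since $d_*$ is a hyperbolic distance-like function, Remark \ref{rmk.hdlfhyp} then forces $d_* \in \calD_\G$, contradicting $d_* \in \partial_M\calD_\G$. The proof that $u_{\max} = \infty$ is symmetric, with the roles of $d$ and $d_*$ interchanged.

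Granted this claim, a direct computation yields $\Dil(d_1, d_1^*) = \sup(\ell_d + \ell_{d_*})/(\ell_d + 2\ell_{d_*}) = 1$ (the supremum being achieved in the limit as $\ell_{d_*}/\ell_d \to 0$) and $\Dil(d_1^*, d_1) = \sup(\ell_d + 2\ell_{d_*})/(\ell_d + \ell_{d_*}) = 2$ (as $\ell_{d_*}/\ell_d \to \infty$). Moreover, $d_1$ and $d_1^*$ are not roughly similar, since otherwise $d$ and $d_*$ would be so, forcing $d+d_* \in \calD_\G$ to imply $d \in \calD_\G$, contrary to $d \in \partial_M\calD_\G$. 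Proposition \ref{prop.d+-inf} therefore identifies the limits of the Manhattan geodesic $\sigma^{[d_1^*]/[d_1]}_\bullet$ as
\[
\sigma_\infty = [1 \cdot d_1^* - d_1] = [d_*], \qquad \sigma_{-\infty} = [2 d_1 - d_1^*] = [d],
\]
as required.
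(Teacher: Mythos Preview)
Your proof is correct and follows essentially the same strategy as the paper's. The only cosmetic difference lies in the choice of the interior pair for the converse: the paper takes the symmetric pair $d_\infty+2d_{-\infty}$ and $2d_\infty+d_{-\infty}$ (both dilations equal to $2$, recovering $3d_{\pm\infty}$ at the limits), whereas you take the asymmetric pair $d+d_*$ and $d+2d_*$ (dilations $1$ and $2$, recovering $d$ and $d_*$ directly). Your explicit claim $u_{\min}=0$, $u_{\max}=\infty$, established through Theorem~\ref{thm.Dildistancelike}, is exactly the mechanism the paper uses implicitly when it argues that the dilation cannot drop below $2$ without forcing a boundary pseudo metric to dominate a member of $\calD_\G$; both arguments hinge on the same fact that elements of $\partial_M\calD_\G$ are never quasi-isometric to a word metric. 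One minor citation slip: $d_1=d+d_*\in\calD_\G$ is the transversality hypothesis itself, not Lemma~\ref{lem.int+bounr=int}; that lemma is what gives $d_1^*=d_1+d_*\in\calD_\G$.
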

\begin{proof}
    Suppose first that $\rho=\sigma_{-\infty}$ and $\rho_*=\sigma_\infty$ for $\sigma_\bullet=\sigma_\bullet^{\tau_*/\tau}$ the Manhattan geodesic for $\tau=[d],\tau_*=[d_*]$, so that $d$ and $d_*$ are not roughly similar and have exponential growth rate 1. We consider $d_{-\infty}\in \rho$ and $d_{\infty}\in \rho_*$, which up to rescaling, we can assume satisfy
    $$d_\infty \eqsim \Dil(d,d_*)d_*-d \text{ and }d_{-\infty} \eqsim \Dil(d_*,d)d-d_*.$$ 
    In particular, we have
    $$d_\infty+d_{-\infty} \eqsim (\Dil(d_*,d)-1)d+(\Dil(d,d_*)-1)d_*,$$
    and this last pseudo metric is quasi-isometric to pseudo metrics in $\calD_\G$ since $\Dil(d,d_*)\cdot\Dil(d_*,d)>1$ and $\Dil(d,d_\ast),\Dil(d_\ast,d)\geq 1$ \cite[Lem.~3.6]{oregon-reyes.ms}. By Remark \ref{rmk.hdlfhyp}, this implies that $d_\infty+d_{-\infty}\in \calD_\G$, and hence $\rho$ and $\rho_*$ are transverse.

    For the reverse implication, suppose that $\rho=[d_{-\infty}]$ and $\rho_*=[d_\infty]$ in $\partial_M \scrD_\G$ are transverse. By assumption, $ad_{\infty}+bd_{-\infty}\in \calD_\G$ for any $a,b>0$, and in particular the pseudo metrics
    \begin{equation}\label{eq.d01fromdd'}
        d:=d_\infty+2d_{-\infty} \text{ and } d_*:=2d_\infty+d_{-\infty},
    \end{equation}
    belong to $\calD_\G$. We have that $d$ and $d_*$ are not roughly isometric, since otherwise we would get $d \eqsim \lam d_*$ for some $\lam>0$ and hence $(1-2\lam)d_\infty \eqsim (\lam-2)d_{-\infty},$ contradicting $d_\infty+d_{-\infty}\in \calD_\G$. 

    From \eqref{eq.d01fromdd'} we get
     $$d=d_\infty+2d_{-\infty}=d_\infty+2(d_*-2d_\infty)=2d_*-3d_\infty,$$
     and $2d_*-d=3d_\infty\geq 0$. This gives $d\leq 2d_*$ and hence $\Dil(d,d_*)\leq 2$. But, if $2=\Dil(d,d_*)+\al$ for some $\al>0$, then we would have
     $$3d_\infty=2d_*-d=(\Dil(d,d_*)+\al)d_*-d=\al d_*+(\Dil(d,d_*)d_*-d) \gtrsim \al d_*,$$
     contradicting that $d_\infty\in \partial_M \calD_\G$. We obtain $\Dil(d,d_*)=2$, and by the same argument we deduce $\Dil(d_*,d)=2$.

     To conclude the result, if $\sigma_\bullet=\sigma_\bullet^{\tau_*/\tau}$ is the Manhattan geodesic for $\tau=[d]$ and $\tau_*=[d_*]$, then by Proposition \ref{prop.d+-inf} there are pseudo metrics $d^\sigma_{\pm\infty}\in \partial_M\calD_\G$ representing $\sigma_{\pm \infty}$ and satisfying \begin{equation*}
         d^\sigma_\infty \eqsim \Dil(d,d_*)d_*-d=2d_*-d=3d_\infty,
     \end{equation*}
     \begin{equation*}
         d^\sigma_{-\infty} \eqsim \Dil(d_*,d)d-d_*=2d-d_*=3d_{-\infty}.
     \end{equation*}
    We get that $d^\sigma_\infty$ is roughly similar to $d_\infty$ and $d^\sigma_{-\infty}$ is roughly similar to $d_{-\infty}$, so that $\rho=\sigma_{-\infty}$ and $\rho_*=\sigma_{\infty}$, as desired.
\end{proof}
\begin{corollary}\label{coro.transverse}
    If $[d_\infty],[d_{-\infty}]\in \partial_M\scrD_\G$ are transverse, then there exist $d,d_*\in \calD_\G$ such that 
    $$d_\infty \eqsim \Dil(d,d_*)d_*-d\ \ \ \text{ and } \ \ \ d_{-\infty} \eqsim \Dil(d_*,d)d-d_*.$$
\end{corollary}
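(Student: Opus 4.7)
The plan is to extract the required $d$ and $d_*$ directly from the construction appearing in the reverse direction of the proof of Proposition \ref{prop.chartransverse}. Starting with arbitrary chosen representatives $d_\infty^0 \in [d_\infty]$ and $d_{-\infty}^0 \in [d_{-\infty}]$, transversality together with (an iterated application of) Lemma \ref{lem.int+bounr=int} ensures that every positive linear combination $a d_\infty^0 + b d_{-\infty}^0$ with $a, b > 0$ lies in $\calD_\G$. In particular, set $d := d_\infty^0 + 2 d_{-\infty}^0$ and $d_* := 2 d_\infty^0 + d_{-\infty}^0$, both in $\calD_\G$.

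I would then reproduce the dilation computation from the proof of Proposition \ref{prop.chartransverse}. From $d = 2 d_* - 3 d_\infty^0 \leq 2 d_*$ one immediately gets $\Dil(d, d_*) \leq 2$. For the matching lower bound, suppose for contradiction that $\Dil(d, d_*) = 2 - \alpha$ for some $\alpha > 0$. Then Theorem \ref{thm.dilatta} gives a constant $C$ with $\Dil(d, d_*) d_* - d \geq -C$, so
\[
3 d_\infty^0 \;=\; 2 d_* - d \;=\; \alpha d_* + (\Dil(d, d_*) d_* - d) \;\gtrsim\; \alpha d_*.
\]
Combined with $3 d_\infty^0 \leq 2 d_*$, this would make $d_\infty^0$ quasi-isometric to $d_*$, hence to a word metric, contradicting $d_\infty^0 \in \partial_M \calD_\G$. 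Thus $\Dil(d, d_*) = 2$, and the symmetric argument yields $\Dil(d_*, d) = 2$.

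Direct substitution now gives $\Dil(d, d_*) d_* - d = 2(2 d_\infty^0 + d_{-\infty}^0) - (d_\infty^0 + 2 d_{-\infty}^0) = 3 d_\infty^0$ and analogously $\Dil(d_*, d) d - d_* = 3 d_{-\infty}^0$. Since $3 d_\infty^0$ and $3 d_{-\infty}^0$ are positive scalar multiples of the initial representatives, they still represent the rough similarity classes $[d_\infty]$ and $[d_{-\infty}]$; taking them as the pseudo metrics $d_\infty$ and $d_{-\infty}$ in the statement makes the required $\eqsim$ relations hold with equality. The only delicate point is pinning down the exact values $\Dil(d, d_*) = \Dil(d_*, d) = 2$, which relies on Theorem \ref{thm.dilatta} combined with $d_\infty^0, d_{-\infty}^0 \notin \calD_\G$; otherwise the corollary is essentially a direct readout of the explicit identities already obtained in the proof of Proposition \ref{prop.chartransverse}.
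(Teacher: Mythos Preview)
Your proof is correct and follows exactly the route the paper intends: the corollary is meant to be read off from the construction in the reverse implication of Proposition~\ref{prop.chartransverse}, and you reproduce that construction faithfully, including the key computation $\Dil(d,d_*)=\Dil(d_*,d)=2$.

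Two small remarks. First, the appeal to Lemma~\ref{lem.int+bounr=int} is fine but slightly roundabout; the paper's own proof simply notes that $a d_\infty^0$ and $b d_{-\infty}^0$ are still representatives of the given boundary classes, so transversality (which is a condition on classes) directly gives $a d_\infty^0 + b d_{-\infty}^0 \in \calD_\G$. Second, your last paragraph produces $\Dil(d,d_*)d_*-d = 3 d_\infty^0$, and you resolve the factor $3$ by declaring $3d_\infty^0$ to be the representative $d_\infty$ appearing in the statement. That is a legitimate reading, but if one reads the corollary as asserting the conclusion for \emph{given} representatives $d_\infty, d_{-\infty}$, you should instead rescale: replacing $d,d_*$ by $d/3, d_*/3$ keeps $\Dil=2$ and yields $\Dil(d,d_*)d_*-d = d_\infty^0$ and $\Dil(d_*,d)d-d_* = d_{-\infty}^0$ on the nose. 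Either way the argument goes through.
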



\section{Examples of boundary pseudo metrics}\label{sec:examples}

In this section, we provide concrete examples of pseudo metrics in $\ov\calD_\G$ and $\partial_M\calD_\G$, which include the ones from Theorem \ref{thm.examples}. These pseudo metrics will be induced from actions of $\G$ on hyperbolic spaces, according to the following definition.

\begin{definition}\label{def.orbitpseudo}
    Let $(X,d_X)$ be a pseudo metric space endowed with an isometric action of $\G$. By a \emph{orbit pseudo metric} induced by the action of $\G$ on $X$, we mean any pseudo metric on $\G$ of the form $d_X^p(x,y)=d_X(xp,yp)$ for $x,y\in \G$, where $p\in X$ is a base point. The rough similarity class $\rho_X=[d_X^p]$ is independent of the point $p$, and when appropriate, we say it is the (boundary) metric structure \emph{induced} by the action of $\G$ on $(X,d_X)$.
 \end{definition}

\subsection{Useful criteria} In general, verifying condition \eqref{eq.ineqdefboundarymetric} in Definition \ref{def.Manhattanboundarymetrics} is not at all direct. Instead, we will rely on the following criterion, for which similar instances have appeared in the literature.

\begin{lemma}\label{lem.BCCchar}
   A left-invariant pseudo metric $d$ on $\G$ belongs to $\ov\calD_\G$ if and only if $\ell_d$ is non-identically zero and additionally:
    \begin{enumerate}
        \item[i)]  $d$ is $\al$-roughly geodesic for some $\al\geq 0$; and,
        \item[ii)] if $d_0\in \calD_\G$ is $\al_0$-roughly geodesic, then there exists some $C\geq 0$ such that if $\gam \subset \G$ is an $(\al_0,d_0)$-rough geodesic with endpoints $x,y$, then $\gam$ is $C$-Hausdorff close in $d$ to an $(\al,d)$-rough geodesic with endpoints $x,y$.
    \end{enumerate}
\end{lemma}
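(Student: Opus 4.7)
The plan is to prove both implications by carefully transferring rough geodesic and Gromov product information between $d$ and a fixed $d_0\in\calD_\G$. Both directions hinge on the Gromov product inequality $(x|y)_{o,d}\leq \lambda(x|y)_{o,d_0}+\lambda$ from Definition \ref{def.Manhattanboundarymetrics}, together with the local observation that consecutive points of a $d_0$-rough geodesic lie at bounded $d$-distance: setting $L:=\max\{d(o,u):d_0(o,u)\leq 1+\alpha_0\}$, the left-invariance of $d$ and the finiteness of $d_0$-balls give $L<\infty$.

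For the forward direction, assume $d\in\ov\calD_\G$. Condition (i) is exactly Lemma \ref{lem.RGforHDLF}. For (ii), given $d_0\in\calD_\G$ and an $(\alpha_0,d_0)$-rough geodesic $\gamma=(z_0,\ldots,z_m)$ with endpoints $x,y$, I would extract the non-decreasing subsequence $\gamma'=(z_{i(0)},\ldots,z_{i(n)})$ produced by Lemma \ref{lem.RGforHDLF}, which is already an $(\alpha,d)$-rough geodesic with the same endpoints. The inclusion $\gamma'\subseteq\gamma$ handles one half of the Hausdorff closeness in $d$. For the other half, every $z_j$ with $i(k)\leq j\leq i(k+1)$ satisfies $(z_{i(k)}|z_{i(k+1)})_{z_j,d_0}\leq 3\alpha_0/2$ since it lies between the two on the $d_0$-rough geodesic. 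Using left-invariance of $d$ and $d_0$ to translate the defining inequality of $\ov\calD_\G$ to the base point $z_j$, together with Proposition \ref{prop.qiGromovprod} to change the reference pseudo metric if needed, one obtains $(z_{i(k)}|z_{i(k+1)})_{z_j,d}\leq C_1$ for a uniform $C_1$. Combined with $d(z_{i(k)},z_{i(k+1)})\leq 1+\alpha$, this yields $d(z_j,z_{i(k)})\leq 1+\alpha+2C_1=:C$.

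For the backward direction, assume (i), (ii), and that $\ell_d$ is not identically zero (equivalently non-constant, since $\ell_d(e)=0$). Fix any $d_0\in\calD_\G$, $\alpha_0$-roughly geodesic and $\delta_0$-hyperbolic. For $x,y\in\G$, take an $(\alpha_0,d_0)$-rough geodesic $\gamma$ from $x$ to $y$, and let $p\in\gamma$ be such that $d_0(o,p)\leq (x|y)_{o,d_0}+\kappa_0$; such a $p$ exists by standard hyperbolic geometry (take a quasi-center of the triangle $(o,x,y)$ and approximate it by a point of $\gamma$). Hypothesis (ii) provides an $(\alpha,d)$-rough geodesic $\gamma'$ from $x$ to $y$ and a point $q\in\gamma'$ with $d(p,q)\leq C$. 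Chaining consecutive bounded $d$-steps along an $(\alpha_0,d_0)$-rough geodesic from $o$ to $p$ gives $d(o,p)\leq L\, d_0(o,p)+L\alpha_0$. On the other hand, membership of $q$ in the $d$-rough geodesic $\gamma'$ yields $(x|y)_{q,d}\leq 3\alpha/2$, hence $(x|y)_{o,d}\leq d(o,q)+3\alpha/2$. Combining these estimates produces $(x|y)_{o,d}\leq \lambda(x|y)_{o,d_0}+\lambda$ for a suitable $\lambda$, placing $d$ in $\ov\calD_\G$.

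The main technical obstacle is the Hausdorff closeness in the forward direction. Lemma \ref{lem.RGforHDLF} furnishes a subsequence with the correct rough geodesic behavior under $d$, but is silent about the intermediate points $z_j$. One must separately invoke the Gromov product inequality at the base point $z_j$ — not just at the identity — to control $d(z_j,z_{i(k)})$; this is where the full strength of $d$ being a hyperbolic distance-like function, rather than merely satisfying the inequality at a single base point, is essential. The backward direction is more mechanical once the chain of triangle inequalities is set up correctly, the only delicate point being the existence of a near-optimal $p$ on $\gamma$, which is a standard consequence of hyperbolicity.
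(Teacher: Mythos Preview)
Your proof is correct. The backward direction is essentially the paper's argument rephrased: the paper takes a $(\kappa_0,d_0)$-quasi-center $p$ for the triple $x,y,w$, shows it is also a $(\tilde\kappa,d)$-quasi-center by using (ii) to pass to nearby points on the three $d$-rough geodesics, and then bounds $(x|y)_{w,d}$ via $d(p,w)\leq \lambda_0 d_0(p,w)+\lambda_0$; you specialize to $w=o$ and take $p$ directly on the single geodesic from $x$ to $y$, which is the same mechanism.

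The forward direction genuinely differs. The paper proves the stronger statement of Lemma~\ref{lem.charGro=BBT}(2), namely that \emph{every} $(\alpha,d)$-rough geodesic $\beta$ from $x$ to $y$ is Hausdorff close to $\gamma$, via a two-sided matching argument (for $u\in\gamma$ find $v\in\beta$ with $|d(x,u)-d(x,v)|$ small, then use hyperbolicity of $d_0$ to bound $d(u,v)$; and conversely). You instead exploit that the $(\alpha,d)$-rough geodesic produced by Lemma~\ref{lem.RGforHDLF} is literally a subsequence $\gamma'\subset\gamma$, so one half of the Hausdorff closeness is free and the other reduces to a single Gromov-product bound on intermediate points. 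Your shortcut is legitimate because condition (ii) of Lemma~\ref{lem.BCCchar} only asks for \emph{some} close $d$-rough geodesic, and the stronger conclusion of Lemma~\ref{lem.charGro=BBT}(2) is not used elsewhere in the paper. The trade-off is that your route saves work here but does not yield the uniqueness-up-to-Hausdorff-distance of $d$-rough geodesics that the paper's formulation records.
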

This lemma follows immediately from the next statement.
\begin{lemma}\label{lem.charGro=BBT} Let $d_0$ and $d$ be left-invariant pseudo metrics on $\G$, and assume $d_{0}\in \calD_\G$ is $\delta_{0}$-hyperbolic and $\alpha_{0}$-roughly geodesic. Then the following conditions are equivalent:
\begin{enumerate}
    \item There exists $\lambda>0$ such that
$$
(x | y)_{w, d} \leq \lambda (x | y)_{w, d_{0}}+\lambda
\hspace{ 2mm } \text{ for all }x, y, w \in \G.$$
    \item $d$ is $\alpha$-roughly geodesic for some $\alpha$, and there is $C>0$ such that for all $x, y \in \G$ the following holds:
 if $\gamma$ is an $(\alpha_{0}, d_{0})$-rough geodesic with endpoints $x, y$, and $\beta$ is an $(\alpha, d)$-rough geodesic with endpoints $x, y$, then $\beta$ and $\gamma$ are $C$-Hausdorff close in the pseudo metric $d$.
 \end{enumerate}
 Furthermore, if either of the these statements hold then $d$ is hyperbolic.
\end{lemma}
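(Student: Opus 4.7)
My plan is to prove both implications separately. A preliminary observation used in both directions: for any left-invariant pseudo metric $d$ on $\G$, the quantity $L := \max\{d(o,u) : d_0(o,u) \leq 1+\alpha_0\}$ is finite, since $d_0 \in \calD_\G$ forces $d_0$-balls to be finite. Chaining via a $d_0$-rough geodesic and applying the triangle inequality for $d$ yields the Lipschitz bound $d(x,y) \leq L\, d_0(x,y) + L\alpha_0$ for all $x, y \in \G$, independently of (1) or (2).

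For $(2) \Rightarrow (1)$, I would fix $x, y, w \in \G$ together with an $(\alpha_0, d_0)$-rough geodesic $\gamma$ and an $(\alpha, d)$-rough geodesic $\beta$ joining $x$ and $y$. Hyperbolic geometry in $(\G, d_0)$ supplies $p^* \in \gamma$ with $d_0(w, p^*) \leq (x|y)_{w, d_0} + D_0$, and the Lipschitz bound above gives $d(w, p^*) \leq L (x|y)_{w, d_0} + O(1)$. By (2), there is $q^* \in \beta$ with $d(p^*, q^*) \leq C$, and $(x|y)_{q^*, d} \leq \alpha$ since $q^*$ lies on a $d$-rough geodesic. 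Combining through the elementary inequality $(x|y)_{w, d} \leq (x|y)_{q^*, d} + d(w, q^*)$ yields (1).

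For $(1) \Rightarrow (2)$, the strategy is to first upgrade $d$ to a $\delta$-hyperbolic, $\alpha$-roughly geodesic pseudo metric. Any $\kappa_0$-quasi-center $p$ of a triple $(x, y, z)$ in $d_0$, whose existence is guaranteed by $d_0$-hyperbolicity and rough geodesicity, is immediately promoted by (1) to a uniform quasi-center in $d$. For rough geodesicity of $d$, I would mimic the proof of Lemma \ref{lem.RGforHDLF}: given an $(\alpha_0, d_0)$-rough geodesic $x = z_0, \ldots, z_m = y$, the sequence $a_i := d(x, z_i)$ has increments bounded by $L$, so Lemma \ref{lemma:seqRG} extracts $x_j := z_{i(j)}$ with $|j - a_{i(j)}| \leq (L+2)/2$. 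The bound $(z_i | y)_{z_j, d_0} \leq 3\alpha_0/2$ for $i \leq j$, combined with (1), gives $(z_i | y)_{z_j, d} \leq D'$, which is exactly what is needed to verify that $x_0, \ldots, x_n$ is an $(\alpha, d)$-rough geodesic. The standard characterization that a roughly geodesic pseudo metric space admitting uniform quasi-centers is hyperbolic then yields that $d$ is $\delta$-hyperbolic.

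With $d$ hyperbolic and roughly geodesic, stability of rough geodesics gives that any two $(\alpha, d)$-rough geodesics between the same endpoints are $K$-Hausdorff close in $d$. To establish the Hausdorff closeness in (2), given $\gamma$ a $d_0$-rough geodesic and $\beta$ a $d$-rough geodesic between common endpoints, the extraction above yields a $d$-rough geodesic $\gamma' \subset \gamma$ which is $K$-close to $\beta$; moreover, each $z_i \in \gamma$ is $O(1)$-close in $d$ to the $x_j \in \gamma'$ with $j$ chosen near $a_i$, since the same Gromov product estimate used for rough geodesicity bounds $d(z_i, z_{i(j)})$ by $|a_i - a_{i(j)}| + O(1)$. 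Chaining these observations proves both directions of Hausdorff closeness in $d$. The main obstacle is establishing that (1) alone implies hyperbolicity of $d$; once the $d_0$-quasi-centers have been transferred and rough geodesicity is in hand, the rest reduces to the standard hyperbolic stability theorem.
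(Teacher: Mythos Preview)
Your $(2)\Rightarrow(1)$ direction is correct and is a mild streamlining of the paper's argument: you project $w$ to a single $d_0$-rough geodesic from $x$ to $y$, whereas the paper routes through a $(\kappa_0,d_0)$-quasi-center of the whole triple $(x,y,w)$ and three geodesics; both finish via the Lipschitz bound $d\le L\,d_0+L\alpha_0$ and the inequality $(x|y)_{w,d}\le (x|y)_{q,d}+d(w,q)$.

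The gap is in $(1)\Rightarrow(2)$. Your argument hinges on the assertion that ``a roughly geodesic pseudo metric space admitting uniform quasi-centers is hyperbolic,'' and this is false: $\mathbb{Z}^2$ with its $\ell_1$ word metric is geodesic and every triple has a $0$-quasi-center (the coordinate-wise median, as in any median space), yet it is not hyperbolic. Since you then invoke Morse stability of $(\alpha,d)$-rough geodesics---which genuinely requires hyperbolicity of $d$---to pass from the extracted subsequence $\gamma'\subset\gamma$ to an arbitrary $(\alpha,d)$-rough geodesic $\beta$, the proof breaks here. The paper avoids hyperbolicity of $d$ altogether: for $u\in\gamma$ it uses $\delta_0$-hyperbolicity of $d_0$ together with (1) to get
\[
\min\{(x|v)_{u,d},\,(v|y)_{u,d}\}\ \le\ \lambda\,(x|y)_{u,d_0}+\lambda\delta_0+\lambda,
\]
and then chooses $v\in\beta$ with $d(x,v)\approx d(x,u)$; expanding whichever product attains the minimum gives $d(u,v)=O(1)$. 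The other inclusion is handled symmetrically, because the displayed bound lives at the base point $u\in\gamma$ regardless of where $v$ sits. If you wish to keep your route through stability, you must justify hyperbolicity of $d$ differently---for instance by showing that the particular $d$-rough geodesics extracted from $d_0$-rough geodesics form uniformly thin triangles in $d$ (this uses $d_0$-thinness, the Lipschitz bound, and your own estimate that every $z_i\in\gamma$ is $O(1)$-close in $d$ to some $x_j\in\gamma'$), from which the $4$-point condition follows.
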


\begin{proof}
Suppose first that $d$ satisfies (1), so that it is $\alpha$-roughly geodesic by Lemma \ref{lem.RGforHDLF}. To prove (2), let $\gamma$ and $\beta$ be $\left(\alpha_{0}, d_{0}\right)$ and $(\alpha, d)$-rough geodesics respectively, with endpoints $x, y \in \G$. We claim that these geodesics are $C$-Hausdorff close in $d$ for some $C$ independent of $\beta$ and $\gam$.

To this end, let $u\in \gam$, so that $(x|y)_{u,d_0}\leq 3\al_0/2$ and by (1) we get $(x|y)_{u,d}\leq 3\lam \al_0/2+\lam$. By applying of Lemma \ref{lemma:seqRG} to the sequence $a_i=d(x,v_i)$ for $v_i$ the elements in $\beta$, we can find some $v\in \beta$ such that \begin{equation}\label{eq.11}|d(x,u)-d(x,v)|\leq 3\lam \al_0+2\lam+\al+1,
\end{equation}
and hence 
\begin{equation}\label{eq.12}|d(u,y)-d(v,y)|\leq 6\lam \al_0+4\lam+4\al+1.
\end{equation}
Also, by $\del_0$ hyperbolicity of $d_0$ we have
\begin{equation}\label{eq.groprod}\min\{(x|v)_{u,d},(v|y)_{u,d}\}\leq \lam (x|y)_{u,d_0}+\lam \del_0 +\lam\leq 3\lam \al_0/2+\lam \del_0+\lam.
\end{equation}
Independently on what Gromov product achieves the minimum on the left-hand side of \eqref{eq.groprod}, by applying inequalities \eqref{eq.11} or \eqref{eq.12}  we end up concluding that $d(u,v)\leq \lam (9\al_0+2\del_0+6)+4\del+1=:C_1$. This implies that $\gam$ is contained in the $C_1$-neighborhood of $\beta$ with respect to $d$.

Now take $v \in \beta$, so that
$(x | y)_{v, d}  \leq 3\al/2$. By Lemma \ref{lem.RGforHDLF}, we can deduce that there exists $D\geq 0$ independent of $\beta$ and $\gamma$ and some $u\in \gam$ such that $$\max(|d(x,u)-d(x,v)|,|d(u,y)-d(v,y)|)\leq D.$$
As before, we can conclude that $d(u,v)\leq C_2$ for some uniform $C_2$, and hence $\al$ and $\beta$ are $C$-Hausdorff close with respect to $d$, for $C=\max(C_1,C_2)$. This proves our claim and the implication $(1) \Rightarrow (2)$.

Conversely, suppose $d$ satisfies (2) so that it is $\alpha$-roughly geodesic for some $\al\geq 0$. 
Let $x, y, w \in \G$, and let $p$ be a $\left(\k_{0}, d_{0}\right)$-quasi-center for $x, y, w$ with $\k_0$ depending only on $d_0$. We claim that $p$ is a $(\tilde{\k}, d)$-quasi-center for $x, y, w$, with $\tilde{\k}$ independent of $x, y, w$.

Let $\gamma_{1}, \gamma_{2}, \gamma_{3}$ be $\left(\alpha_{0}, d_{0}\right)$-rough geodesics joining $x$ and $y, y$ and $w$, and $w$ and $x$, respectively. Then there is some $D_{0}$ depending only on $\delta_{0}, \alpha_{0}$ and $\k_{0}$ such that $d_{0}\left(p, \gamma_{i}\right) \leq D_{0}$ for $i \in\{1,2,3\}$.
If $\beta_{1}, \beta_{2}, \beta_{3}$ are $(\alpha, d)$-rough geodesics joining $x$ and $y, y$ and $w$, and $w$ and $x$, respectively, then by (2), there is some $C \geq 0$ depending only on $d_0$ and $d$ such that $\beta_{i}$ and $\gamma_{i}$ are $C$-Hausdorff close in $(\G, d)$. 

Also, since $d_0$ is quasi-isometric to a word metric and $\G$ is finitely generated, we can find some $\lam_0>0$ such that $d\leq \lam_0d_0+\lam_0$. In particular we get $$d\left(p, \beta_{i}\right) \leq \lambda_{0} D_{0}+\lambda_{0}+C$$ for all $i \in\{1,2,3\}$, implying
$$\max \left\{(x | y)_{p, d},(y | w)_{p, d},(w | x)_{p, d}\right\} \leq 3 \alpha / 2+\lambda_{0} D_{0}+\lambda_{0}+C.$$
Therefore, $p$ is a $(\tilde{\k}, d)$-quasi-center, with $\tilde{\k}=3 \alpha / 2+\lambda_{0} D_{0}+\lambda_{0}$ $+C$, which proves the claim.

From this, we deduce
\begin{align*}
    (x | y)_{w, d} & \leq \tilde{\k}+d(p, w) 
 \leq \tilde{\k}+\lambda_{0} d_{0}(p, w)+\lambda_{0} \\
 & \leq \tilde{\k}+\lambda_{0}\left[\k_{0}+(x | y)_{w, d_{0}}\right]+\lambda_{0} \\
 & =\lambda_{0} (x | y)_{w, d_{0}}+\lambda_{0}+\lambda_{0} \k_{0}+\tilde{\k},
\end{align*}
and $d$ satisfies 1) with $\lambda=\lambda_{0}+\lambda_{0} \k_{0}+\tilde{\k}$.

To finish the proof we need to prove the furthermore statement. If $d$ and $d_0$ are as in the statement of the lemma and $(1)$ and $(2)$ hold, then we saw in the proof of $(2) \implies (1)$ that every $\left( \k_0,d_{0}\right)$-quasi-center for $x, y, z$ is also a $(\tilde{\k},d)$-quasi-center, with $\tilde{\k} > 0$ independent of $x,y,z$. This quasi-center will be uniformly close to any uniform rough geodesic connecting a pair of points among $x,y,z$. Hence $d$ is hyperbolic.
\end{proof}
 As a corollary we deduce the following aforementioned result.
\begin{corollary}\label{cor.hyp}
        Every pseudo metric $d \in \ov\Dc_\G$ is hyperbolic.
\end{corollary}

We also need a criterion that guarantees non-triviality of the stable translation length. 
\begin{lemma}\label{lem.unbound+trl} Let $d$ be a left-invariant, $\delta$-hyperbolic, and $\alpha$-roughly geodesic pseudo metric on the (non-necessarily hyperbolic) group $\Gamma$. Then $\ell_d$ is non-identically zero if and only if $(\G,d)$ is unbounded.
\end{lemma}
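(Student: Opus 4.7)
The backward direction is immediate: if $\ell_d[g]>0$ for some $g\in\G$, then $d(o,g^n)\geq\tfrac{n}{2}\ell_d[g]$ for $n$ large, so $(\G,d)$ is unbounded.

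For the forward direction I would argue by contradiction: assume $(\G,d)$ is unbounded but $\ell_d\equiv 0$. Since $d$ is left-invariant, $\G$ acts by isometries transitively (hence coboundedly) on the unbounded, $\delta$-hyperbolic, $\alpha$-roughly geodesic space $(\G,d)$, and $\partial(\G,d)$ is non-empty because any infinite rough geodesic ray, which exists by unboundedness, defines a boundary point. The plan is then to invoke the standard classification of cobounded isometric actions on Gromov hyperbolic spaces (Coornaert--Delzant--Papadopoulos): since no element of $\G$ is loxodromic (as $\ell_d\equiv 0$) and the space is unbounded, the action must be horocyclic, so every $g\in\G$ fixes a unique common point $\xi\in\partial(\G,d)$.

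To derive a contradiction, I would use the Busemann cocycle at $\xi$. In a $\delta$-hyperbolic $\alpha$-roughly geodesic space, given any sequence $w_k\to\xi$ the limit $\beta_\xi(x):=\lim_k\bigl(d(x,w_k)-d(o,w_k)\bigr)$ exists up to $O(\delta,\alpha)$-error for every $x\in\G$, and because each $g$ fixes $\xi$ the quantity $c(g):=\beta_\xi(go)$ satisfies the cocycle identity $c(g^n)=n\,c(g)+O(1)$. Combined with $|c(g^n)|\leq d(o,g^no)=o(n)$ (from $\ell_d[g]=0$), dividing by $n$ forces $c(g)=0$ for every $g\in\G$. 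But along a rough geodesic ray $o=z_0,z_1,\ldots\to\xi$ a direct computation yields $c(z_n)=-n+O(1)\to-\infty$, the desired contradiction.

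\textbf{Main obstacle.} The delicate part is carrying out Busemann theory and the action classification in the roughly geodesic, non-proper setting: one must check that Busemann limits exist up to bounded error, that the limit set of a cobounded action fills the whole boundary, and that no-loxodromic together with unboundedness forces the limit set to collapse to a point. A more hands-on alternative avoids the classification altogether: along a rough geodesic ray $o=z_0,z_1,\ldots$, one considers $P_n:=(z_n^{-1}o\mid z_no)_o=d(o,z_no)-\tfrac12 d(o,z_n^2o)$ (using left-invariance). If some large $n$ satisfies $P_n\leq d(o,z_no)/3$, then a standard chain-of-Gromov-products computation in $(\G,d)$ yields $\ell_d[z_n]\geq d(o,z_no)/3-O(\delta)>0$; otherwise both $z_n^{-1}o$ and $z_no$ converge to the same boundary point $\xi$ for every $n$, reducing us to the Busemann argument above.
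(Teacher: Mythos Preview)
Your main approach via the classification of isometric actions is correct and yields a valid proof, but it is quite different from the paper's. The paper argues directly and elementarily: pick any $x$ with $d(o,x)\geq 9\alpha+12\delta+2$, choose $u$ near the midpoint of an $\alpha$-rough geodesic from $o$ to $x$, set $v=u^{-1}x$, and apply the inequality
\[
d(o,x)\ \leq\ \max\Bigl\{d(o,u)+\ell_d[v],\ d(o,v)+\ell_d[u],\ \tfrac{1}{2}\bigl(d(o,u)+d(o,v)+\ell_d[x]\bigr)\Bigr\}+6\delta
\]
from \cite[Thm.~1.2]{oregon-reyes.ineq}. If all three translation lengths vanish, the right-hand side is at most $d(o,x)/2+(9\alpha+1)/2+6\delta$, contradicting the choice of $x$. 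This avoids boundaries, classification, and Busemann cocycles entirely and gives an explicit diameter bound. Your route is more structural but leans on machinery (the action classification and Busemann theory in the non-proper, roughly geodesic setting) that you rightly flag as needing care.

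Two remarks on the details. First, your identity $c(g^n)=nc(g)+O(1)$ is not correct as stated: for a $D$-quasi-morphism one only has $c(g^n)=n\hat c(g)+O(D)$ with $\hat c(g)=\lim_n c(g^n)/n$, while $c(g^n)-nc(g)$ can be of order $nD$. This does not break your argument, since $|c(g^n)|\leq d(o,g^no)=o(n)$ still forces $\hat c(g)=0$, hence $|c(g)|\leq D$ for all $g$, contradicting $c(z_n)\to-\infty$. Second, the ``hands-on alternative'' has a real gap in its ``otherwise'' branch: from $(z_n^{-1}o\mid z_no)_o\to\infty$ you get that $z_n$ and $z_n^{-1}$ converge to the same $\xi$, but this does \emph{not} show that any individual $z_n$ fixes $\xi$, which is precisely what the Busemann cocycle step requires. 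Without the classification theorem (which the alternative was supposed to bypass), that reduction does not go through.
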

\begin{proof} It is enough to prove that if $\diam(\Gamma, d) \geq L:=9 \alpha+12 \delta+2$, then there is some $x \in \Gamma$ with $\ell_{d}[x]>0$. To this end, let $x \in \Gamma$ be such that $d(x, o) \geq L$. By our $\alpha$-rough geodesic assumption there is some $u \in \Gamma$ such that if we set $v:=u^{-1} x$, then
$$
|d(v, o)-d(x, o)/2| \leq (3 \alpha+1) / 2 \hspace{2mm}\text { and }\hspace{2mm} d(u, o)+d(v, o) \leq d(x, o)+3 \alpha.
$$
Also, by \cite[Thm.~1.2]{oregon-reyes.ineq} applied to $f=u, g=v$ and with base point the identity element $o \in \Gamma$, we get
$$
d(x, o) \leq \max \left\{d(u, o)+\ell_{d}[v], d(v, o)+\ell_{d}[u],
\frac{d(u, o)+d(v, o)+\ell_{d}[x]}{2}\right\}+6 \delta .
$$
Therefore, either some of the elements $x, u, v$ have positive stable translation lengths, or
$$
\begin{aligned}
L \leq d(x, o) & \leq \max \left\{d(u, o), d(v, o), \frac{d(u, o)+d(v, o)}{2}\right\}+6 \del \\
& \leq d(x, o)/2+(9 \alpha+1) / 2+6 \delta,
\end{aligned}
$$
which is a contradiction since $L>9 \alpha+1+12 \delta$.
\end{proof}

We also require the following lemma, which states that $\ov\calD_\G$ is closed under equivariant quasi-isometry among rough geodesic pseudo metrics. It is an immediate consequence of Proposition \ref{prop.qiGromovprod} and the invariance of hyperbolicity under quasi-isometry.

\begin{lemma}\label{lem.boundaryqi} Let $d \in \ov\calD_{\Gamma}$, and let $\tilde{d}$ be a roughly geodesic, left-invariant pseudo metric on $\Gamma$ such that the identity map  $id :(\Gamma, d) \rightarrow (\Gamma, \tilde{d})$ is a quasi-isometry. Then $\tilde{d} \in \ov\calD_{\Gamma}$, and $d\in \partial_M\calD_\G$ if and only if $\tilde{d}\in \partial_M\calD_\G$.
\end{lemma}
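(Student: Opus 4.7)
The plan is to verify both defining conditions of $\ov\calD_\G$ for $\tilde d$ and then to transfer the property of being quasi-isometric to a word metric, which yields the equivalence for $\partial_M\calD_\G$. First I would establish hyperbolicity: by Remark \ref{rmk.hdlfhyp} the pseudo metric $d$ is hyperbolic, and by Lemma \ref{lem.RGforHDLF} it is roughly geodesic. Since $\tilde d$ is roughly geodesic by hypothesis and the identity $(\G, d) \to (\G, \tilde d)$ is a quasi-isometry, the standard invariance of hyperbolicity under quasi-isometry among roughly geodesic spaces gives that $\tilde d$ is hyperbolic. Applying Proposition \ref{prop.qiGromovprod} to the identity map then produces constants $\lambda', C' \geq 0$ with
\[
(x|y)_{o, \tilde d} \leq \lambda' (x|y)_{o, d} + C' \quad \text{for all } x, y \in \G.
\]

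Next I would verify condition \eqref{eq.ineqdefboundarymetric} for $\tilde d$. Since $d \in \ov\calD_\G$, there exist $\lambda > 0$ and $d_0 \in \calD_\G$ with $(x|y)_{o, d} \leq \lambda (x|y)_{o, d_0} + \lambda$; composing with the displayed bound yields a constant $\tilde \lambda$ so that $(x|y)_{o, \tilde d} \leq \tilde \lambda (x|y)_{o, d_0} + \tilde \lambda$, which is the required form. For non-constancy of $\ell_{\tilde d}$, I would apply Lemma \ref{lem.unbound+trl} to $d$ (which is hyperbolic and roughly geodesic with $\ell_d$ non-constant) to deduce $(\G, d)$ is unbounded, whence $(\G, \tilde d)$ is unbounded by the quasi-isometry. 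A second application of Lemma \ref{lem.unbound+trl}, now to $\tilde d$, shows $\ell_{\tilde d}$ is non-identically zero, hence non-constant since $\ell_{\tilde d}[o] = 0$. This establishes $\tilde d \in \ov\calD_\G$.

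Finally, for the equivalence $d \in \partial_M\calD_\G \iff \tilde d \in \partial_M\calD_\G$: recall $\partial_M\calD_\G = \ov\calD_\G \setminus \calD_\G$, and a pseudo metric in $\ov\calD_\G$ lies in $\calD_\G$ precisely when the identity is a quasi-isometry to some word metric. Since the identity is a quasi-isometry between $d$ and $\tilde d$, composition of quasi-isometries shows this property transfers between them in both directions. No step in this argument poses a genuine obstacle---as the paper's preceding remark indicates, the entire proof is a matter of chaining the hypotheses through the cited results, and the only mildly delicate ingredient is securing hyperbolicity of $\tilde d$ in order to invoke Proposition \ref{prop.qiGromovprod}.
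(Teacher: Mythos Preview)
Your proposal is correct and follows essentially the same approach as the paper, which simply states that the lemma is an immediate consequence of Proposition \ref{prop.qiGromovprod} and the invariance of hyperbolicity under quasi-isometry among roughly geodesic spaces. You have carefully unpacked this one-line justification, including the verification that $\ell_{\tilde d}$ is non-constant via Lemma \ref{lem.unbound+trl}, a detail the paper leaves implicit; one could alternatively obtain this directly from the quasi-isometry inequality $\ell_{\tilde d}[x] \geq \lambda^{-1}\ell_d[x]$, but your route is equally valid.
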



\subsection{Bounded backtracking and actions on $\R$-trees}
An $\mathbb{R}$-\emph{tree} is a metric space such that any two points can be joined by a unique embedded arc, whose length coincides with the distance of the given points. Equivalently, an $\mathbb{R}$-tree is a geodesic, $0$-hyperbolic metric space. Some hyperbolic groups act naturally and non-trivially on $\R$-trees, see for instance \cite{bestvina.feighn.stable}, \cite{paulin.trees}.

Extending the definition given in \cite{gaboriau.jaeger.levitt.lustig}, we say that the isometric action of the hyperbolic group $\G$ on the $\R$-tree $(T,d_T)$ has \emph{bounded backtracking} if the following holds: for some (any) finite, symmetric, generating subset $S \subset \G$ and some (any) $p \in T$, there exists $C \geq 0$ such that if $\gamma \subset \G$ is a geodesic in $d_S$ joining $o$ and $x$, then $\gamma \cdot p$ is $C$-Hausdorff close to the geodesic in $T$ joining $p$ and $xp$. 

The next proposition relates bounded backtracking and pseudo metrics belonging to $\ov\calD_\G$.

\begin{proposition}\label{prop.BBTtrees}
    Suppose $\G$ acts isometrically on the $\R$-tree $T$, so that the action has no global fixed point. Then the orbit pseudo metrics for the action of $\G$ on $T$ belong to $\ov\calD_\G$ if and only if the action has bounded backtracking. In particular, when $\G$ is not virtually free, isometric actions with bounded backtracking induce pseudo metrics belonging to $\partial_M \calD_\G$.
\end{proposition}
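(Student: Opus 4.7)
The plan is to apply Lemma \ref{lem.BCCchar} to the orbit pseudo metric $d := d_T^p$, which is $0$-hyperbolic since Gromov products pull back via the orbit map: $(x|y)_{z,d} = (xp|yp)_{zp,d_T}$. Non-triviality of $\ell_d$ is handled by Serre's theorem on actions on trees: since $\G$ is finitely generated and acts without a global fixed point, some $g\in\G$ must act as a hyperbolic isometry, so $\ell_d[g]$ equals the positive $d_T$-translation length of $g$; in particular $(\G, d)$ is unbounded and Lemma \ref{lem.unbound+trl} applies.

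For the forward implication, assume bounded backtracking, fix a word metric $d_0 = d_S \in \calD_\G$, and set $M := \max_{s \in S} d_T(p, sp)$. Given $y \in \G$ and a word geodesic $o = x_0, \dots, x_n = y$, the orbit path $(x_i p)$ has consecutive steps of size at most $M$ and stays within the bounded-backtracking distance $C$ of the tree geodesic $[p, yp]$. If $t_i$ denotes the nearest-point projection of $x_i p$ onto $[p, yp]$, then $|t_{i+1} - t_i| \leq M + 2C$, so Lemma \ref{lemma:seqRG} allows us to extract a subsequence $(x_{i_k})$ forming an $\alpha$-rough geodesic in $d$, verifying condition (i) of Lemma \ref{lem.BCCchar}. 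For condition (ii), any $(\alpha_0, d_0)$-rough geodesic $\gamma$ is $d_0$-close to a word geodesic, so its orbit stays close to $[xp, yp]$ by bounded backtracking, while any $(\alpha, d)$-rough geodesic $\beta$ pushes forward to a rough geodesic $\beta p$ in the $0$-hyperbolic tree $T$, which likewise stays close to $[xp, yp]$. Since the projections of $\gamma p$ and $\beta p$ onto $[xp, yp]$ both cover it up to uniformly bounded gaps, mutual Hausdorff closeness in $d$ follows.

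For the converse, if $d \in \ov\calD_\G$, Lemma \ref{lem.BCCchar} provides $C$-Hausdorff closeness in $d$ between any word geodesic $\gamma$ and some $(\alpha, d)$-rough geodesic $\beta$ sharing its endpoints; since $\beta p$ is a rough geodesic in $T$ and stays close to the tree geodesic, so does $\gamma p$, which is bounded backtracking. For the final clause, suppose $\G$ is not virtually free and yet $d \in \calD_\G$. The orbit map realizes $(\G, d)$ as an isometric pseudo-image in $T$, and composing with a quasi-isometry between $(\G, d_S)$ and $(\G, d)$ produces a quasi-isometric embedding of $\G$ into an $\R$-tree. Since asymptotic dimension is monotone under subspaces and QI-invariant, this forces $\mathrm{asdim}(\G) \leq \mathrm{asdim}(T) = 1$, contradicting the classical fact that non-elementary hyperbolic groups of asymptotic dimension at most one are virtually free. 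The principal technical obstacle lies in the forward-direction projection/subsampling argument, where one must simultaneously control the bounded-backtracking constant and the gaps between consecutive projections onto the tree geodesic to ensure that the extracted subsequence is a genuine rough geodesic in $d$.
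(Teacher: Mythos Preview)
Your proof is correct and follows the same overall strategy as the paper: reduce to the criterion in Lemma~\ref{lem.BCCchar}. The differences are in the execution. The paper isolates a standalone lemma (Lemma~\ref{lem.roughgeodRtree}) showing that \emph{any} orbit pseudo metric from an action of a finitely generated group on an $\R$-tree is roughly geodesic, independently of bounded backtracking; with this in hand, condition~(i) is automatic and condition~(ii) is literally the definition of bounded backtracking, so the biconditional drops out in one line. You instead derive rough geodesicity only under the bounded-backtracking hypothesis via the projection/subsampling argument with Lemma~\ref{lemma:seqRG}, which is perfectly valid but does a little more work than necessary for the forward direction (and you then recover it for the converse from $d\in\ov\calD_\G$). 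Your treatment of the final clause via asymptotic dimension (quasi-isometric embedding into a tree forces $\mathrm{asdim}\leq 1$, hence virtually free) is correct and makes explicit something the paper leaves to the reader; the paper's implicit reasoning is the same, namely that a pseudo metric in $\calD_\G$ would make $\G$ quasi-isometric to a tree.
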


When $\G=F_n$ is a finitely generated non-abelian free group, Guirardel showed that every \emph{small}, minimal, isometric action of $\G$ on an $\mathbb{R}$-tree has bounded backtracking \cite[Cor.~2]{guirardel}. Therefore, Proposition \ref{prop.BBTtrees} implies item (3) of Theorem \ref{thm.examples} in the case of free groups. In addition, the Culler-Morgan compactification $\ov{\scrC \scrV} (F_n)$ of the Outer space coincides with the space of (rough similarity) classes  of orbit pseudo metrics induced by very small isometric actions of $\G$ on $\R$-trees \cite[Thm.~2.2]{bestvina.feighn.outer}. Therefore, from Proposition \ref{prop.BBTtrees} we deduce that $\ov{\scrC \scrV} (F_n)$ naturally injects into $\ov\scrD_{F_n}$. 

\begin{corollary}\label{cor.smallfreegroup}
Let $\G=F_n$ be a finitely generated non-abelian free group acting isometrically on the $\R$-tree $T$ so that the action is small. Then the orbit pseudo metrics induced by this action belong to $\ov\calD_{F_n}$. 
In particular, there exists a natural injective map $\ov{\scrC \scrV}(F_n) \hookrightarrow \ov\scrD_{F_n}$ that sends the Culler-Vogtmann boundary $\partial\scrC\scrV(F_n)$ into $\partial_M \scrD_{F_n}$.
\end{corollary}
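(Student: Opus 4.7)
The plan is to establish the two parts of the corollary in turn. For the first claim, that any orbit pseudo metric for a small isometric action of $F_n$ on an $\R$-tree $T$ lies in $\ov\calD_{F_n}$, I would first reduce to the case of a minimal action. Assuming the action has no global fixed point, let $T_0\subset T$ denote the unique minimal $F_n$-invariant subtree. For base points $p\in T$ and $p_0\in T_0$, the orbit pseudo metrics $d_T^p$ and $d_{T_0}^{p_0}$ differ by at most $2 d_T(p,p_0)$, so they are roughly isometric and thus represent the same class in $\ov\scrD_{F_n}$. Hence I may assume the action is minimal, at which point Guirardel's \cite[Cor.~2]{guirardel} yields bounded backtracking and Proposition \ref{prop.BBTtrees} immediately gives $d_T^p \in \ov\calD_{F_n}$.

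For the second claim, the injection $\iota:\ov{\scrC\scrV}(F_n)\hookrightarrow \ov\scrD_{F_n}$ is obtained by combining the first part with the Bestvina--Feighn identification of $\ov{\scrC\scrV}(F_n)$ with the space of projective length functions of very small minimal actions \cite[Thm.~2.2]{bestvina.feighn.outer}: send $[T]$ to $[d_T^p]$, which is well defined because base point changes yield roughly isometric pseudo metrics and rescalings yield roughly similar ones. Injectivity amounts to marked length spectrum rigidity in $\ov\calD_{F_n}$: if $[d_T^p]=[d_{T'}^{p'}]$ in $\ov\scrD_{F_n}$, then $\ell_T=\tau\ell_{T'}$ for some $\tau>0$, and applying Theorem \ref{thm.Dildistancelike} in both directions to the pair $d_T^p$ and $\tau d_{T'}^{p'}$ gives $|d_T^p-\tau d_{T'}^{p'}|\leq C$ (since both dilations equal $1$); classical length function rigidity for very small minimal $\R$-tree actions then yields $[T]=[T']$ in $\ov{\scrC\scrV}(F_n)$.

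Next I would verify that $\iota$ maps $\scrC\scrV(F_n)$ into $\scrD_{F_n}$ and $\partial\scrC\scrV(F_n)$ into $\partial_M\scrD_{F_n}$. Points of $\scrC\scrV(F_n)$ are represented by free minimal simplicial actions with finite quotient graph; for such an action $F_n$ acts geometrically on $T$, so the orbit map is a quasi-isometry and $d_T^p\in \calD_{F_n}$. For $[T]\in \partial\scrC\scrV(F_n)$, the action is either non-free or not cocompact simplicial. In the non-free case, some non-torsion $g\in F_n$ fixes a point of $T$, giving $\ell_T(g)=0$; since non-torsion elements of hyperbolic groups have positive stable translation length in every word metric, this forces $d_T^p\notin \calD_{F_n}$ and hence $d_T^p\in \partial_M\calD_{F_n}$.

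The main obstacle is the remaining case of free boundary actions, such as those arising from arational $\R$-trees. Here I would argue that if $d_T^p$ were quasi-isometric to a word metric on $F_n$, the orbit map would provide a quasi-isometric embedding of $F_n$'s Cayley graph into $T$; combined with the minimality of the action, this would force $T$ to admit a cocompact simplicial structure with uniformly positive edge lengths preserved by $F_n$, placing $[T]$ in the interior $\scrC\scrV(F_n)$ and contradicting the assumption $[T]\in\partial\scrC\scrV(F_n)$. Formalizing this last step would require invoking the Bestvina--Feighn structure theory of $\partial\scrC\scrV(F_n)$ to certify that the only very small actions whose orbit geometry is ``discrete enough'' to be quasi-isometric to the Cayley graph are already in the interior.
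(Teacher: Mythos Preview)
Your approach is essentially the same as the paper's, which is contained entirely in the paragraph preceding the corollary: invoke Guirardel's bounded backtracking result for small minimal actions, apply Proposition~\ref{prop.BBTtrees}, and then use the Bestvina--Feighn identification of $\ov{\scrC\scrV}(F_n)$ with projective length functions of very small actions. You supply more detail than the paper does at each step (reduction to minimal subtrees, the injectivity argument via length functions), and your reasoning there is sound. One minor redundancy: once you know $[d_T^p]=[d_{T'}^{p'}]$ in $\ov\scrD_{F_n}$, the identity $\ell_T=\tau\ell_{T'}$ follows directly from the definition of rough similarity, so the appeal to Theorem~\ref{thm.Dildistancelike} is unnecessary before invoking Culler--Morgan length-function rigidity.

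You are also right to flag the case of free boundary actions as the only nontrivial part of the boundary-to-boundary claim; the paper does not address it explicitly. Your sketch is correct in spirit and can be tightened as follows. If $d_T^p\in\calD_{F_n}$, then $\ell_T[g]\geq \lambda^{-1}\ell_S[g]>0$ for every nontrivial $g$, so the action is free; moreover the orbit map is a quasi-isometric embedding, and by minimality $T$ is the convex hull of the orbit, so the orbit map is a genuine quasi-isometry. This forces the action to be proper and cocompact on a locally compact $\R$-tree, hence simplicial with finite quotient, i.e.\ $[T]\in\scrC\scrV(F_n)$. This replaces your appeal to ``Bestvina--Feighn structure theory'' with an elementary argument.
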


For the proof of Proposition \ref{prop.BBTtrees}, we need a preliminary lemma.
\begin{lemma}\label{lem.roughgeodRtree}
    Let $\G$ be a (not necessarily hyperbolic) finitely generated group acting isometrically on the $\R$-tree $(T,d_T)$. Then for any $p\in T$, the pseudo metric $d_T^p(x,y)=d_T(xp,yp)$ on $\G$ is hyperbolic and roughly geodesic.
\end{lemma}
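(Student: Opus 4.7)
The plan is to verify the two properties in turn. For hyperbolicity, observe that by definition of $d_T^p$, the orbit map $\G\to T$, $g\mapsto gp$, preserves Gromov products exactly, so that $(x|y)_{w,d_T^p} = (xp|yp)_{wp,d_T}$ for all $x,y,w\in\G$. Since any $\R$-tree is $0$-hyperbolic, this immediately gives that $d_T^p$ is $0$-hyperbolic. The nontrivial content is therefore rough geodesicity.

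For rough geodesicity, I would fix a finite symmetric generating set $S$ of $\G$ containing the identity and set $R:=\max_{s\in S}d_T(p,sp)$. If $R=0$ then $d_T^p$ is identically zero and the claim is trivial, so assume $R>0$. Given $x,y\in\G$, $\G$-invariance lets us reduce to $x=e$; let $L=d_T(p,yp)$. Writing $y=s_1\cdots s_m$ as a word in $S$ and setting $g_i=s_1\cdots s_i$ yields $g_0=e$, $g_m=y$, with $d_T(g_ip,g_{i+1}p)\leq R$. The natural parameter to track is the Gromov-product sequence $c_i:=(g_ip|yp)_p$; since orthogonal projection onto the geodesic $[p,yp]$ is $1$-Lipschitz in an $\R$-tree, $|c_i-c_{i+1}|\leq R$, with $c_0=0$ and $c_m=L$. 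Apply Lemma \ref{lemma:seqRG} to $(c_i)$ to extract a subsequence $0=i(0)\leq i(1)\leq\cdots\leq i(n)=m$ with $n=\lfloor L\rfloor$ and $|c_{i(j)}-j|\leq (R+2)/2$, and set $h_j:=g_{i(j)}$. This produces a candidate rough geodesic $e=h_0,h_1,\ldots,h_n=y$ in $\G$.

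To verify the rough geodesic property $|d_T(h_jp,h_kp)-|j-k||\leq \alpha$, I would use the tree $4$-point condition. Writing $\pi_j$ for the projection of $h_jp$ onto $[p,yp]$ (so $\pi_j=c_{i(j)}\approx j$) and $\delta_j:=d_T(h_jp,[p,yp])$ for the off-geodesic distance, the tree formula gives $d_T(h_jp,h_kp)=|\pi_j-\pi_k|+\delta_j+\delta_k$ in generic position, yielding $|d_T(h_jp,h_kp)-|j-k||\leq \delta_j+\delta_k+(R+2)$. The main obstacle is therefore to bound $\delta_j+\delta_k$ uniformly, since word-sequence orbit points need not stay near $[p,yp]$ (as the leafy examples like $\Z$ acting on $\R$ with long pendant edges show). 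I expect this to be resolved by refining the extraction so that, among all indices $i$ with $c_i\approx j$, one picks $i(j)$ minimising the off-geodesic distance; the key structural input needed is that $\G\cdot p$ is coarsely dense, with constant depending only on $R$, in its $\G$-invariant convex hull $Y\subseteq T$, which lets one replace wandering orbit points by nearby ones close to the geodesic and absorb the remaining discrepancy into $\alpha$.
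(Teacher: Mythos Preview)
Your hyperbolicity argument is correct and matches the paper's. The gap is exactly where you locate it: bounding the off-geodesic distances $\delta_j$ of the extracted points $h_jp$ from $[p,yp]$. Neither of your proposed remedies closes it as written.

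Minimizing $\delta_i$ among indices with $c_i\approx j$ accomplishes nothing on its own: you still need to know that this minimum is uniformly bounded, and your own leafy-tree observation shows the word sequence $(g_ip)_i$ can wander arbitrarily far from $[p,yp]$, so there is no a priori reason any index with the right projection has small $\delta$. As for the coarse density of $\G\cdot p$ in its convex hull, this is indeed the missing ingredient, but you have only asserted it; its proof is essentially the content of the lemma.

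The paper supplies this directly and bypasses the word-sequence and Lemma~\ref{lemma:seqRG} apparatus entirely. Extend the orbit map to a $\G$-equivariant, $L$-Lipschitz map $\phi:\Cay(\G,S)\to T$ by sending each Cayley edge linearly to the tree geodesic between the images of its endpoints, where $L=\max_{s\in S}d_T(p,sp)$. For any Cayley geodesic $\gamma$ from $x$ to $y$, the image $\phi(\gamma)\subset T$ is connected and contains $xp$ and $yp$; since in a tree any connected subset containing two points contains the geodesic between them, $\phi(\gamma)\supseteq[xp,yp]_T$. Hence every point $p_i$ at integer spacing along $[xp,yp]_T$ has a preimage $q_i\in\gamma$, and rounding to the nearest vertex $x_i\in\G$ gives $d_T(x_ip,p_i)\leq L/2+O(1)$. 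The sequence $(x_i)$ is then automatically a rough geodesic for $d_T^p$, with no separate off-geodesic control needed. This connectedness argument is precisely what proves your coarse-density claim; once you see it, the projection-tracking detour becomes unnecessary.
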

\begin{proof}
Clearly, $d$ is hyperbolic. To show it is roughly geodesic, let $S \subset \G$ be a finite, symmetric generating set, and let $\phi:\Cay(\G,S) \ra T$ be the unique $\G$-equivariant map such that $\phi(o)=p$, and each edge from $o$ to $s\in S$ in $\Cay(\G,S)$ is linearly mapped to the geodesic in $T$ joining $p$ and $sp$. Then $\phi$ is $L$-Lipschitz, with $L=\max_{s\in S}{d_T(p,sp)}$. 

Now, let $x,y\in \G$, and let $[x,y]_T$ denote the unique geodesic segment in $T$ joining $xp$ and $yp$. Since $\phi$ is continuous, for any geodesic path $\gam\subset \Cay(\G,S)$ joining $x$ and $y$, the image $\phi(\gam)$ contains $[x,y]_T$. Therefore, if $xp=p_0,p_1,\dots,p_n=yp$ is a 1-rough geodesic in $T$, then for any $i$ there is some $q_i\in \gam$ such that $d_T(p_i,\phi(q_i))\leq 3/2$. Also, for each $q_i$ there is some vertex $x_i\in \G$ such that $d_S(q_i,x_i)\leq 1$, and hence $d_T(p_i,x_ip)=d_T(p_i,\phi(x_i))\leq d_T(p_i,\phi(q_i))+d_T(\phi(q_i),\phi(x_i)) \leq 3/2+L/2$. If we choose $x_0=x$ and $x_n=y$, we conclude that the sequence $x=x_0,x_1,\dots,x_n=y$ is a $(4+L,d_T^p)$-rough geodesic joining $x,y \in \G$, which completes the proof.
\end{proof}

\begin{proof}[Proof of Proposition \ref{prop.BBTtrees}]
Let $\G$ act on the $\R$-tree $(T,d_T)$ as in the statement, and for $p\in T$, consider the pseudo metric $d_T^p$. This pseudo metric has non-constant stable translation length function since the action has no global fixed point  (by Lemma \ref{lem.unbound+trl}). As a consequence of Lemma \ref{lem.roughgeodRtree}, $d_T^p$ also satisfies property $i)$ of Lemma \ref{lem.BCCchar}. Therefore, the theorem follows by Lemma \ref{lem.BCCchar}, since $d_T^p$ satisfying property $ii)$ of that lemma is equivalent to the action having bounded backtracking.
\end{proof}


\subsection{Liouville embedding of the space of projective geodesic currents}
Throughout this section, let $\G$ be the fundamental group of a closed orientable surface of negative Euler characteristic, and fix a free and cocompact isometric action of $\G$ on the hyperbolic plane $(\Hy^2,d_{\Hy^2})$. Also, let 
$$i:\Curr(\G) \times \Curr(\G) \ra \R$$
be Bonahon's geometric intersection number \cite{bonahon.currentsTeich}, and for $g\in \G$, let $\eta_{[g]}\in \Curr(\G)$ be the rational geodesic current associated to $[g]$. The action of $\G$ on $\Hy^2$ induces a $\G$-equivariant bijection between the set $\calG$ of geodesics in $\Hy^2$ and $\partial^2 \G /\cor{\pm}$, where in $\partial ^2 \G$ we mod out by the involution $(p,q) \leftrightarrow (q,p)$. In this way, we consider geodesic currents as $\G$-invariant, locally finite measures on $\calG$.

Following \cite[Sec.~4]{burger-iozzi-parreau-pozzetti}, to each $\mu\in \Curr(\G)$ we construct the pseudo metric $d_\mu$ on $\Hy^2$ as follows: for $x,y\in \Hy^2$, let $[x,y]$ denote the closed geodesic interval in $\Hy^2$ joining $x$ and $y$, and we also define $(x,y]=[x,y] \bs \cor{x}$ and $[x,y)=[x,y] \bs \cor{y}$. If $I \subset \Hy^2$ is any subset, we let $\calG^\perp_I$ denote the set of geodesics in $\Hy^2$ intersecting $I$ exactly once. In this way, the pseudo metric $d_\mu$ is given by
$$d_\mu(x,y)=\frac{1}{2}(\mu(\calG^\perp_{[x,y)})+\mu(\calG^\perp_{(x,y]})).$$
In \cite[Prop.~4.1]{burger-iozzi-parreau-pozzetti} is was proven that $d_\mu$ is indeed a \emph{straight} pseudo metric, meaning that for $x,z\in \Hy^2$ and $z\in [x,y]$ it holds that
$$d_\mu(x,z)=d_\mu(x,y)+d_\mu(y,z).$$
This fact together with \cite[Lem.~4.7]{burger-iozzi-parreau-pozzetti} imply that
\begin{equation}\label{eq.intnumb}
\ell_{d_\mu}[g]=i(\mu,\eta_{[g]})
\end{equation}
for all $\mu\in \Curr(\G)$ and $g\in \G$.

From equation \eqref{eq.intnumb} and the work of Otal \cite{otal}, we see that two pseudo metrics $d_\mu$ and $d_{\mu'}$ on $\Hy^2$ are roughly similar through the identity map on $\Hy^2$ if and only if $\mu=\lam \mu'$ for some $\lam>0$. For non-zero geodesic currents, these pseudo metrics induce metric structures in $\ov\scrD_\G$.

\begin{proposition}\label{prop.embedcurrents}
Let $\G$ be a surface group acting geometrically on $\Hy^2$. Then for any non-zero geodesic current $\mu\in \Curr(\G)$, the orbit pseudo metrics induced by the action of $\G$ on $(\Hy^2,d_\mu)$ belong to $\ov\calD_\G$, and they belong to $\calD_\G$ if and only if $\mu$ is filling.
\end{proposition}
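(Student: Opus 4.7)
The plan is to verify the conditions of Definition \ref{def.Manhattanboundarymetrics} for the orbit pseudo metric $d_\mu^p$, and then to use Theorem \ref{thm.Dildistancelike} to decide when $d_\mu^p$ is quasi-isometric to a word metric. Throughout, set $d_0 := d_{\Hy^2}^p \in \calD_\G$, and let $\mu_L$ denote the Liouville current of $d_0$, so that $\ell_{d_0}[g] = c_L\, i(\mu_L, \eta_{[g]})$ for some constant $c_L > 0$ and every $[g] \in \conj'$.

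The two analytic ingredients I would establish first are the upper Lipschitz bound $d_\mu(u,v) \leq C\, d_{\Hy^2}(u,v) + C$ and the Gromov product inequality $(u|v)_{w, d_\mu} \leq C\, (u|v)_{w, d_{\Hy^2}} + C$, for all $u,v,w \in \Hy^2$. The Lipschitz bound follows from local finiteness of $\mu$ together with cocompactness of the $\G$-action: for a compact fundamental domain $K \subset \Hy^2$, $\mu(\calG^\perp_K) < \infty$, and any geodesic segment in $\Hy^2$ meets only uniformly boundedly many $\G$-translates of $K$ per unit length. For the Gromov product, pick $z$ on the $d_{\Hy^2}$-geodesic from $u$ to $v$ with $d_{\Hy^2}(z,w) \leq (u|v)_{w,d_{\Hy^2}} + \delta$, which is available since $\Hy^2$ is $\delta$-hyperbolic. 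Straightness of $d_\mu$ (from \cite[Prop.~4.1]{burger-iozzi-parreau-pozzetti}) gives $d_\mu(u,v) = d_\mu(u,z) + d_\mu(z,v)$, so combining with the triangle inequality produces $(u|v)_{w, d_\mu} \leq d_\mu(z,w)$; the Lipschitz bound then closes the estimate. Evaluating at orbit points yields \eqref{eq.ineqdefboundarymetric} for $d_\mu^p$ against $d_0$.

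Non-constancy of $\ell_{d_\mu}$ when $\mu \neq 0$ follows because $\mu_L$ is filling, so $i(\mu, \mu_L) > 0$; density of rational currents in $\Curr(\G)$ and continuity of the intersection form then produce $[g] \in \conj'$ with $\ell_{d_\mu}[g] = i(\mu, \eta_{[g]}) > 0$. This completes $d_\mu^p \in \ov\calD_\G$. To split $\ov\calD_\G$ into $\calD_\G$ and $\partial_M\calD_\G$, note that the Lipschitz bound already gives $\Dil(d_\mu^p, d_0) < \infty$, so by Theorem \ref{thm.Dildistancelike} we have $d_\mu^p \leq C\, d_0 + C$ automatically. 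In view of Remark \ref{rmk.hdlfhyp}, $d_\mu^p \in \calD_\G$ is therefore equivalent to the reverse inequality, which by Theorem \ref{thm.Dildistancelike} (with the roles of $\psi$ and $\psi_*$ swapped, using that $d_0$ is itself a hyperbolic distance-like function) is equivalent to $\Dil(d_0, d_\mu^p) < \infty$.

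I would read off this last condition from the scale-invariant function
\[
F : \bbP\Curr(\G) \to [0, \infty], \qquad [\nu] \mapsto \frac{i(\mu,\nu)}{i(\mu_L,\nu)},
\]
which is continuous on the compact space $\bbP\Curr(\G)$ since $\mu_L$ is filling, and satisfies $F([\eta_{[g]}]) = c_L^{-1}\, \ell_{d_\mu}[g]/\ell_{d_0}[g]$. If $\mu$ is filling, then $F > 0$ everywhere, so by compactness $F \geq c'' > 0$, giving $\Dil(d_0, d_\mu^p) \leq c_L/c'' < \infty$ and hence $d_\mu^p \in \calD_\G$. If $\mu$ is not filling, pick $\nu_0 \neq 0$ with $i(\mu, \nu_0) = 0$ and approximate by rational currents $a_n \eta_{[g_n]} \to \nu_0$ (possible by density); continuity of $F$ on projective space then yields $F([\eta_{[g_n]}]) \to 0$, so $\Dil(d_0, d_\mu^p) = \infty$ and $d_\mu^p \in \partial_M\calD_\G$. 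The main obstacle I anticipate is the careful execution of the Gromov product step: one has to exploit straightness of $d_\mu$ along $d_{\Hy^2}$-geodesics, which need not be $d_\mu$-geodesics, and track the $\delta$-hyperbolic error when comparing Gromov products to distance-to-geodesic in $\Hy^2$.
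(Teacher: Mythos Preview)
Your proposal is correct and follows essentially the same approach as the paper. Both arguments establish the Lipschitz bound $d_\mu \lesssim d_{\Hy^2}$ via cocompactness and local finiteness, then the Gromov product inequality via straightness of $d_\mu$ along $\Hy^2$-geodesics (you use a nearest point on $[u,v]$ to $w$, the paper uses a $d_{\Hy^2}$-quasi-center for $x,y,w$ and shows it is also a $d_\mu$-quasi-center; these are equivalent manoeuvres), and both characterize the filling case via the continuous function $[\nu]\mapsto i(\mu,\nu)/i(\mu_L,\nu)$ on the compact space $\bbP\Curr(\G)$; your explicit appeal to Theorem \ref{thm.Dildistancelike} for the equivalence $d_\mu^p\in\calD_\G \Leftrightarrow \Dil(d_0,d_\mu^p)<\infty$ makes precise a step the paper leaves implicit.
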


\begin{corollary}\label{coro.boundTeichembeds}
The assignment $\mu \mapsto \rho_{(\Hy^2,d_\mu)}$ induces an injective map from the space $\bbP\Curr(\G)$ of projective geodesic currents into $\ov\scrD_\G$. This map sends the Thurston boundary $\partial\scrT_\G$ into $\partial_M \scrD_{\G}$.
\end{corollary}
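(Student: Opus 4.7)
The plan is to verify four things in sequence: well-definedness on $\bbP\Curr(\G)$, containment of the image in $\ov\scrD_\G$, injectivity, and control of the image of $\partial\scrT_\G$. Proposition \ref{prop.embedcurrents} already handles the second point, so the real work is on the first, third, and fourth.

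First I would note that the construction $\mu \mapsto d_\mu$ is linear in $\mu$, since $d_\mu(x,y)$ is defined directly via the $\mu$-measure of the geodesic families $\calG^\perp_{[x,y)}$ and $\calG^\perp_{(x,y]}$. Hence $d_{\lam\mu}^p = \lam d_\mu^p$ for every $\lam>0$ and every base point $p$, so the orbit pseudo metrics for $\mu$ and $\lam\mu$ differ by the multiplicative constant $\lam$ and are therefore roughly similar. Together with Proposition \ref{prop.embedcurrents}, this shows that $[\mu] \mapsto \rho_{(\Hy^2,d_\mu)}$ is a well-defined map $\bbP\Curr(\G) \to \ov\scrD_\G$.

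For injectivity, I would suppose $\rho_{(\Hy^2,d_\mu)} = \rho_{(\Hy^2,d_{\mu'})}$ and chase the marked length spectrum. Rough similarity of the orbit pseudo metrics yields some $\tau>0$ with $\ell_{d_\mu}[g] = \tau\,\ell_{d_{\mu'}}[g]$ for every non-torsion $g\in\G$. Formula \eqref{eq.intnumb} rewrites this as
\[
i(\mu,\eta_{[g]}) = i(\tau\mu',\eta_{[g]}) \quad \text{for every } [g]\in\conj'.
\]
Using density of the scaled rational currents $\{\lam\eta_{[g]} : \lam>0, [g]\in\conj'\}$ in $\Curr(\G)$ together with continuity of the intersection pairing in the second argument, this upgrades to $i(\mu,\nu) = i(\tau\mu',\nu)$ for every $\nu\in\Curr(\G)$. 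Bonahon's injectivity of the intersection pairing on the space of currents then forces $\mu = \tau\mu'$, so $[\mu] = [\mu']$ in $\bbP\Curr(\G)$.

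Finally, to show $\partial\scrT_\G$ lands in $\partial_M\scrD_\G$, I would use that each class in $\partial\scrT_\G$ is represented by a non-zero measured lamination $\mu\in\calML(\G) \subset \Curr(\G)$, which by definition satisfies $i(\mu,\mu)=0$. In particular $\mu$ is not filling, so by the characterization in Proposition \ref{prop.embedcurrents} we have $d_\mu^p \notin \calD_\G$, that is, $d_\mu^p \in \partial_M\calD_\G$, whence $\rho_{(\Hy^2,d_\mu)}\in \partial_M\scrD_\G$.

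The step I expect to be the main obstacle is the injectivity one: concretely, the passage from $i(\mu,\cdot) = i(\tau\mu',\cdot)$ on all of $\Curr(\G)$ to the equality $\mu = \tau\mu'$. This is standard for surface groups, but uses Bonahon's full machinery on geodesic currents (support and regularity) rather than just the density and weak continuity properties recorded in Section \ref{subsec:currents}, so proper attribution or an alternative route via Otal's rigidity (applied after promoting the rough similarity of orbit pseudo metrics to a rough similarity of $d_\mu, d_{\tau\mu'}$ on all of $\Hy^2$ using the straightness of these pseudo metrics) will be needed.
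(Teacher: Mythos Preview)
Your proposal is correct and matches the paper's (implicit) argument: the paper does not write out a proof of this corollary, relying instead on Proposition~\ref{prop.embedcurrents} together with the sentence immediately preceding it, which invokes Otal's theorem via \eqref{eq.intnumb} to conclude that $d_\mu$ and $d_{\mu'}$ are roughly similar if and only if $\mu = \lam\mu'$ for some $\lam>0$. Two small remarks: the density step extending from rational currents to all of $\Curr(\G)$ is unnecessary, since Otal's result already gives $\mu = \tau\mu'$ directly from $i(\mu,\eta_{[g]}) = i(\tau\mu',\eta_{[g]})$ for all $g$; and the correct attribution for that injectivity is Otal~\cite{otal} (as you note in your closing paragraph) rather than Bonahon, so your ``alternative route'' is in fact the paper's route.
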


We begin the proof of Proposition \ref{prop.embedcurrents}, so we fix $\mu \in \Curr(\G)$.
\begin{lemma}\label{lem.muH2}
    There exists $\lambda_0>0$ such that
$$
d_\mu(x, y) \leq \lambda_0 d_{\Hy^2}(x, y)+\lambda_0
\hspace{2mm} \text{ for all } x, y \in \Hy^2.$$
\end{lemma}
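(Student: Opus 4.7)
The plan is to combine cocompactness of the $\G$-action on $\Hy^2$ with the straightness of $d_\mu$ (as recalled from \cite[Prop.~4.1]{burger-iozzi-parreau-pozzetti}) to reduce the inequality to a uniform bound on segments of bounded length. First I would fix a base point $o\in\Hy^2$ and, using cocompactness, choose $R>0$ with $\G\cdot \overline{B(o,R)}=\Hy^2$. Setting $K:=\overline{B(o,R+1)}$, I get that any geodesic segment in $\Hy^2$ of hyperbolic length at most $1$ can be translated by some element of $\G$ so as to lie entirely inside $K$.

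Next I would show that $C:=\mu(\calG^\perp_K)<\infty$. The key observation here is that, when $\calG$ is viewed as a subset of $\partial^2\Hy^2/\{\pm\}$, the set of geodesics meeting the compact set $K$ is a closed subset bounded away from the diagonal (a geodesic whose endpoints are close in $\partial\Hy^2$ stays near infinity and cannot reach $K$). So $\calG^\perp_K$ is relatively compact in $\calG$, and local finiteness of the current $\mu$ gives $C<\infty$. This is the one step that requires a small amount of care; the rest of the argument is formal.

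Now, from the $\G$-invariance of $\mu$, the pseudo metric $d_\mu$ is $\G$-invariant. So for $x,y\in\Hy^2$ with $d_{\Hy^2}(x,y)\leq 1$, choosing $g\in\G$ with $g\cdot[x,y]\subset K$ gives
\[
d_\mu(x,y)=d_\mu(gx,gy)\leq \mu(\calG^\perp_{g\cdot[x,y]})\leq \mu(\calG^\perp_K)=C.
\]
For arbitrary $x,y\in\Hy^2$, I would set $n:=\lceil d_{\Hy^2}(x,y)\rceil$ and subdivide $[x,y]$ into $n$ consecutive subsegments $[z_{i-1},z_i]$ of length at most $1$ (with $z_0=x$, $z_n=y$). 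Iterating the straightness identity for $d_\mu$ yields
\[
d_\mu(x,y)=\sum_{i=1}^n d_\mu(z_{i-1},z_i)\leq nC\leq C\bigl(d_{\Hy^2}(x,y)+1\bigr),
\]
so that $\lambda_0:=C$ works. The main obstacle is the finiteness statement $\mu(\calG^\perp_K)<\infty$, but this is a standard consequence of the natural topology on $\calG$ together with local finiteness of geodesic currents; everything else reduces to cocompactness, $\G$-invariance, and the additivity of $d_\mu$ along geodesic segments.
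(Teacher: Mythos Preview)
Your proof is correct and follows essentially the same approach as the paper: use cocompactness plus local finiteness of $\mu$ to get a uniform bound $d_\mu\leq C$ on segments of hyperbolic length at most $1$, then subdivide an arbitrary geodesic segment into roughly $d_{\Hy^2}(x,y)$ unit pieces and sum. The only cosmetic differences are that the paper writes $\calG_K$ for the set of all geodesics meeting $K$ (your use of the notation $\calG^\perp_K$ clashes slightly with the paper's convention that $\calG^\perp_I$ means geodesics meeting an interval $I$ \emph{exactly once}) and that the paper appeals to the triangle inequality rather than straightness for the subdivision step.
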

\begin{proof}
    Given $A \geq 0$, we claim that there exists $B_A \geq 0$ such that $d_{\Hy^2}(x, y) \leq A$ implies $d_\mu(x, y) \leq B_A$. Indeed, since the action of $\Gamma$ on $\Hy^2$ is cocompact, there exists a compact set $K \subset \Hy^2$ such that if $d_{\Hy^2}(x, y) \leq A$, then $gx, gy \in K$ for some $g \in \Gamma$. The set $\calG_K \subset \calG$ of geodesics intersecting $K$ is compact, so that $B_A:= \mu\left(\calG_K\right)$ is finite. Therefore, if $x, y \in \Hy^2$ satisfy $d_{\Hy^2}(x, y) \leq A$ and $g$ is as above, we deduce that $d_\mu(x, y)=d_\mu(gx, gy) \leq \mu\left(\calG_K\right)=B_A$, which proves the claim.
    
Let $\lambda_0:=B_1$. If $x, y \in \Hy^2$ and $n=\lfloor d_{\Hy^2}(x, y) \rfloor$, let $x=x_0, x_1, \ldots, x_n \in [x, y]$ be such that $d_{\Hy^2}\left(x, x_i\right)=i$ for all $0 \leq i \leq n$. We get
$$
d_\mu(x, y)  \leq d_\mu\left(x_0, x_1\right)+\cdots+d_\mu\left(x_{n-1}, x_n\right)+d_\mu\left(x_n, y\right) 
 \leq(n+1) \lambda_0 \leq \lambda_0 d_{\Hy^2}(x, y)+\lambda_0,
$$
as desired.
\end{proof}
\begin{proof}[Proof of Proposition \ref{prop.embedcurrents}]
Let $\mu \in \Curr(\G)$ be non-zero, and let $\lambda_0$ be the constant from Lemma \ref{lem.muH2}. We claim that there exists $\lambda_1>0$ such that
\begin{equation}\label{eq.muHy}
 (x|y)_{w, d_\mu} \leq \lambda_1(x|y)_{w, d_{\Hy^2}}+\lambda_1   
\end{equation}
for all $x, y, w \in \Hy^2$. To this end, let $p$ be a $(\k, d_{\Hy^2})$-quasi-center for $x, y, w$, with $\k$ independent of this triple. If $q$ is the point in $[x, y]$ closest to $p$, then $d_{\Hy^2}(p, q) \leq (x|y)_{p, \Hy^2} \leq \k$, so that $d_\mu(p, q) \leq \lambda_0 \k+\lambda_0$.
Since $d_\mu$ is straight, we also have
$$
\begin{aligned}
d_\mu(x, p)+d_\mu(p, y) & \leq 2 \lambda_0 \k+2 \lambda_0+d_\mu(x, q)+d_\mu(q, y) \\
&=2 \lambda_0 \k+2 \lambda_0+d_\mu(x, y),
\end{aligned}
$$
and hence $(x | y)_{p, d_\mu} \leq \tilde\k:=\lambda_0 \k+\lambda_0$.
Similarly, we obtain $(x | w)_{p, d_\mu}, (w |y)_{p, d_\mu} \leq \tilde\k$, so that $p$ is a $\left(\tilde\k, d_\mu\right)$-quasi-center for $x, y$ and $w$. In particular, we deduce
$(x | y)_{w, d_\mu} \leq d_\mu(w, p)+\tilde{\k}$
$\leq \lambda_0(x |y)_{w,d_{\Hy^2}}+\lambda_0+\tilde{\k}+2 \lambda_0 \k$,
which proves the claim with $\lambda_1=\lambda_0+\tilde{\k}+2 \lambda_0 \k$.

Now take $w \in \mathbb{H}^2$  and let $d_\mu^w, d_{\Hy^2}^w$ be the corresponding orbit pseudo metrics induced by the action of $\G$ on $\Hy^2$. These pseudo metrics also satisfy a version of \eqref{eq.muHy}, and since $d_{\Hy^2}^w \in \calD_{\Gamma}$ we see that $d_\mu^w$ satisfies the inequality \eqref{eq.ineqdefboundarymetric} in Definition \ref{def.Manhattanboundarymetrics}. Also, since $\mu$ is non-zero, there exists $g \in \Gamma$ such that $i\left(\mu, \eta_{[g]}\right)=\ell_{d_\mu}[g]>0$, implying $d_\mu^w \in \ov\calD_{\Gamma}$.

Finally, if $L \in \Curr(\Gamma)$ is the Liouville current for the action of $\Gamma$ on $\Hy^2$, then $i(L, \beta)>0$ for all $\beta \in \Curr(\Gamma) \backslash\{0\}$, and hence the function $\Phi:\bbP\Curr(\Gamma) \rightarrow \R_{\geq 0}$ given by
$$
[\beta] \mapsto \frac{i(\mu, \beta)}{i(L, \beta)}
$$
is well-defined, continuous, and positive. Since $\bbP\Curr(\G)$ is compact and $i(L,\eta_{[g]})=\ell_{d_{\Hy^2}}[g]$ for all $g \in \Gamma$, we deduce that $\mu$ is filling if and only if there exists $A>0$ such that $\ell_{d_\mu}[g] \geq A \ell_{d_{\Hy^2}}[g]$ for all $g \in \Gamma$, which happens if and only if $d_\mu^w \in \calD_{\Gamma}$.
\end{proof}

By Skora's theorem \cite{skora}, the Thurston boundary of $\partial \scrT_\G$ can be described as the space of (rough similarity) classes of orbit pseudo metrics induced by small isometric actions of $\G$ on $\R$-trees. Therefore, Proposition \ref{prop.BBTtrees} and Corollary \ref{coro.boundTeichembeds} imply that if $\G$ is a surface group, then every small action of $\G$ on an $\R$-tree induces pseudo metrics in $\ov\calD_\G$. This proves item (3) of Theorem \ref{thm.examples} in the case of surface groups.


\subsection{Combinatorial examples} In this section we prove Propositions \ref{prop.exconeoff} and \ref{prop.exCAT(0)}, which correspond to items (1) and (2) of Theorem \ref{thm.examples}, respectively. We start with a connected graph $X$ with simplicial metric $d_X$, and let $\mathbb{K}=\cor{X_j}_{j\in J}$ be a family of subsets of vertices of $X$. From this data, we construct the connected graph $X_{\bbK}$ obtained by adding to $X$ the new edges $e_{x,y,j}$ with endpoints $x, y$ whenever $j \in J$ and $x, y$ are vertices of $X_j$ (thus $X$ is a subgraph of $X_\bbK$). Let $d_{X,\bbK}$ be the simplicial metric on $X_\bbK$. The following result is due to Kapovich and Rafi \cite[Prop.~2.6]{kap.rafi}, and will be used along with Lemma \ref{lem.BCCchar} to find examples of pseudo metrics in $\ov\calD_\G.$ 

\begin{proposition}\label{prop.conedoffgen}
    Let $X$ be a connected graph such that the simplicial metric $d_X$ is hyperbolic, and let $\bbK$ be a family of uniformly quasi-convex subsets of vertices of $X$. Then $(X_{\bbK},d_{X,\bbK})$ is also hyperbolic, and there is a constant $C\geq 0$ such that whenever $x,y \in X^{(0)}$, $[x,y]_X$ is a $d_X$-geodesic from $x$ to $y$ in $X$ and $[x,y]_{X,\bbK}$ is a $d_{X,\bbK}$-geodesic from $x$ to $y$ in $X_\bbK$, then $[x,y]_X$ and $[x,y]_{X,\bbK}$ are $C$-Hausdorff close in $(X_\bbK,d_{X,\bbK})$.
\end{proposition}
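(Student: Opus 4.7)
The plan is to prove both assertions in parallel, exploiting a \emph{de-electrification} of $d_{X,\bbK}$-geodesics back to paths in $X$. Fix a hyperbolicity constant $\delta$ for $(X,d_X)$ and a uniform quasi-convexity constant $K$ such that every $X_j \in \bbK$ is $K$-quasi-convex in $X$. Two elementary observations drive the proof: the inclusion $X \hookrightarrow X_\bbK$ is $1$-Lipschitz on vertices, and any two vertices of any single $X_j$ are joined by a new edge $e_{x,y,j}$ and therefore have $d_{X,\bbK}$-distance at most $1$, so $X_j$ has uniformly bounded $d_{X,\bbK}$-diameter regardless of its $d_X$-diameter. These two ingredients, $\delta$-hyperbolicity of $X$ and the ``collapse'' of each $X_j$ in $X_\bbK$, are the only quantitative inputs needed.

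For the hyperbolicity of $(X_\bbK,d_{X,\bbK})$, I would invoke the standard coning-off principle of Farb and Bowditch, which asserts that a hyperbolic space remains hyperbolic after coning off a family of uniformly quasi-convex subspaces. Concretely, to verify the slim-triangle condition in $X_\bbK$, replace each new edge $e_{u,v,j}$ along the sides of a $d_{X,\bbK}$-geodesic triangle by a chosen $X$-geodesic from $u$ to $v$ inside $X_j$. This yields three ``relative quasi-geodesic'' paths in $X$ forming an honest $X$-triangle, which is $\delta$-thin. Excursions of these paths through the $X_j$'s have potentially large $d_X$-diameter but uniformly bounded $d_{X,\bbK}$-diameter, so reading the $X$-thinness back in the $X_\bbK$-metric yields a uniform slim-triangle constant for $X_\bbK$.

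For the Hausdorff closeness statement, write $\gamma = [x,y]_{X,\bbK}$ and $\sigma = [x,y]_X$, and form the de-electrification $\tilde\gamma$ of $\gamma$ by replacing each new edge $e_{u,v,j}$ of $\gamma$ by an $X$-geodesic inside $X_j$ from $u$ to $v$. The plan is to compare $\sigma$ with $\tilde\gamma$ via two complementary ingredients: outside the $K$-neighborhoods of the subgraphs $X_j$ actually traversed by $\gamma$, the ``old'' $X$-geodesic segments of $\tilde\gamma$ fellow-travel $\sigma$ by $\delta$-thin triangles; inside each traversed $X_j$, both $\tilde\gamma$ and $\sigma$ (when it enters) remain in the $K$-neighborhood of $X_j$ by quasi-convexity. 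When read back in $X_\bbK$, each such excursion through an $X_j$ collapses to bounded diameter, which is exactly what is needed to convert the pointwise $X$-comparison into a uniform $d_{X,\bbK}$-Hausdorff bound between $\gamma$ and $\sigma$.

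The principal difficulty will be making the ``matching'' of $\sigma$ to $\tilde\gamma$ across their $X_j$-excursions precise: one must show that $\sigma$ enters (or passes near) the same $X_j$'s that $\gamma$ uses, in essentially the same order and at roughly corresponding positions. The assumption that $\gamma$ is a $d_{X,\bbK}$-geodesic is essential here, for it forces $\gamma$ to use each $X_j$ a uniformly bounded number of times: any repeated visit would allow a single replacement edge $e_{u,v,j}$ to shorten $\gamma$, contradicting minimality. This bounded combinatorics, combined with nearest-point projections $\pi_j : X^{(0)} \to X_j^{(0)}$ and the $\delta$-thin triangle inequality, is what makes the matching argument work uniformly in the pair $(x,y)$, and is the step where the hypotheses are used in full.
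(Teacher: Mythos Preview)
The paper does not supply a proof of this proposition: it is quoted verbatim as a result of Kapovich--Rafi \cite[Prop.~2.6]{kap.rafi} and used as a black box. So there is no ``paper's own proof'' to compare against, only the cited source.

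Your outline is broadly in the spirit of the Kapovich--Rafi argument, which also proceeds by comparing $X$-geodesics to $X_\bbK$-geodesics via the collapse of the quasi-convex pieces. Their organization is somewhat different, however: rather than verifying the slim-triangle condition directly in $X_\bbK$ as you propose, they show that the images of $X$-geodesics form a system of paths in $X_\bbK$ satisfying Bowditch's ``guessing geodesics'' (thin-bigon) criterion, which simultaneously yields hyperbolicity of $X_\bbK$ and the Hausdorff-closeness of $X$-geodesics to $X_\bbK$-geodesics. Your de-electrification idea is the same engine, just driven through the slim-triangle definition instead of Bowditch's criterion.

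One point in your sketch deserves more care. You assert that the ``old'' $X$-geodesic subsegments of the de-electrified path $\tilde\gamma$ fellow-travel $\sigma$ by $\delta$-thin triangles. This does not follow from a single application of thinness: $\tilde\gamma$ is a concatenation of possibly many $X$-geodesic arcs, and a priori such a concatenation need not stay near $\sigma$ in $(X,d_X)$ at all (think of a long zigzag). What makes it work is precisely the combinatorial control you mention at the end---that $\gamma$, being a $d_{X,\bbK}$-geodesic, visits each $X_j$ at most once and in a definite linear order---together with quasi-convexity, which forces the entry and exit points of $\tilde\gamma$ into each $X_j$ to lie near the nearest-point projection of $\sigma$ onto $X_j$. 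In other words, the ``matching'' step you flag as the principal difficulty is not a separate afterthought but is exactly what is needed to justify the fellow-traveling claim; as written, the order of your argument presupposes its own hardest step. If you reorganize so that the bounded-visits and projection arguments come first, the rest follows.
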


Now, let $S \subset \G$ be a finite, symmetric generating set with Cayley graph $(\Cay(\G,S),d_S)$. If $\calH$ is a set of subgroups of $\G$, the \emph{coned-off Cayley graph} $(\Cay(\G,S,\calH), d_{S,\calH})$ is defined as follows. Let $X = (\Cay(\G,S),d_S)$ and  take $\mathbb{K}$ to be the collection of all left cosets $xH$ for $H \in \mathcal{H}$ and $x\in \G$. Then $(\Cay(\G,S,\calH), d_{S,\calH})$ is defined to be the graph $(X_\mathbb{K}, d_{X,\mathbb{K}})$ introduced above.

When we cone-off finitely many quasi-convex subgroups, the orbit pseudo metrics induced by the action of $\G$ on the corresponding coned-off Cayley graphs will belong to $\ov\calD_\G$.

\begin{proposition}\label{prop.exconeoff} Let $\calH$ be a finite set of quasi-convex subgroups of $\G$, and for a finite, symmetric generating set $S \subset \G$, consider the coned-off Cayley graph $\Cay(\G,S,\calH)$. If all the subgroups in $\calH$ are infinite index in $\G$, then the orbit pseudo metrics induced by the action of $\G$ on $\Cay(\G,S,\calH)$ belong to $\ov\calD_\G$. In addition, these pseudo metrics belong to $\partial_M\calD_\G$ if and only if some subgroup in $\calH$ is infinite.
\end{proposition}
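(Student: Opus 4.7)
The plan is to let $d := d_{S,\calH}^o$ be the orbit pseudo metric at the identity $o$, which is left-invariant (all base points give roughly similar pseudo metrics since $\G$ acts by isometries on $\Cay(\G,S,\calH)$), and then to verify the hypotheses of Lemma \ref{lem.BCCchar} with reference pseudo metric $d_0 = d_S$. Since $\Cay(\G,S,\calH)$ is a connected graph in which every vertex of $\G$ sits at integer combinatorial distances from its neighbors and at half-integer distances through cone vertices, $d$ is roughly geodesic once one knows the graph is hyperbolic and geodesics are well controlled. The Kapovich--Rafi construction of Proposition \ref{prop.conedoffgen} applies directly: take $X=\Cay(\G,S)$ and let $\bbK=\{gH : g \in \G,\, H \in \calH\}$, which is a \emph{uniformly} quasi-convex family because $\calH$ is finite and each $H \in \calH$ is quasi-convex. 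The cone-vertex construction of $\Cay(\G,S,\calH)$ and the edge-addition construction of $X_\bbK$ are uniformly bi-Lipschitz equivalent on the vertex set $\G$ (both give distance $\leq 1$ between any two elements of a common coset), so Proposition \ref{prop.conedoffgen} yields hyperbolicity of $d$ and the Hausdorff closeness, in the $d$-pseudo metric, between $d_S$-geodesics and $d$-geodesics joining any pair $x,y \in \G$. This handles conditions $(i)$ and $(ii)$ of Lemma \ref{lem.BCCchar}.

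The main obstacle is the remaining condition that $\ell_d$ is non-identically zero, as this is where the infinite-index assumption must be used. By Lemma \ref{lem.unbound+trl} it suffices to produce an infinite-order $g \in \G$ with $\ell_d[g]>0$. Since each $H \in \calH$ is quasi-convex of infinite index in the non-elementary hyperbolic group $\G$, its limit set $\Lambda H \subsetneq \partial \G$ is a proper closed subset, and so is the finite union $\bigcup_{H \in \calH} \Lambda H$. I would then invoke a standard ping-pong construction in $\partial \G$ to produce a primitive loxodromic $g$ with fixed points $g^{\pm \infty} \notin \bigcup_{H \in \calH} \Lambda H$. A quasi-axis of $g$ in $\Cay(\G,S)$ then intersects any $d_S$-neighborhood of any coset $xH$ in a uniformly bounded set, which (combined with the Hausdorff-closeness from Step~1) prevents cone traversals from giving more than a uniform additive saving along the powers $g^n$; hence $d(o,g^n) \asymp n$, giving $\ell_d[g]>0$. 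This concludes that $d \in \ov\calD_\G$.

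For the final assertion, if every $H \in \calH$ is finite then each coset $xH$ has $d_S$-diameter bounded by $C := \max_{H \in \calH}\operatorname{diam}_{d_S}(H)$, so a single cone traversal saves at most $C$ in $d_S$-length; iterating along any $d$-geodesic gives $d_S(x,y) \leq (C{+}1)\, d(x,y)$, and since trivially $d \leq d_S$ the identity map $(\G,d_S)\to(\G,d)$ is a quasi-isometry, placing $d$ in $\calD_\G$. Conversely, if some $H \in \calH$ is infinite, choose $h_n \in H$ with $d_S(o,h_n) \to \infty$; then $d(o,h_n) \leq 1$ because of the path $o \to v(H) \to h_n$ of length $1$ in $\Cay(\G,S,\calH)$, and so $d$ is not quasi-isometric to any word metric, whence $d \in \partial_M\calD_\G$.
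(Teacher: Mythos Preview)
Your verification of conditions (i) and (ii) of Lemma \ref{lem.BCCchar} via Proposition \ref{prop.conedoffgen} is essentially the paper's argument (the paper first enlarges $S$ so that each $\Cay(H,S\cap H)$ is a genuine subgraph, but this is a cosmetic difference). Your treatment of the final ``if and only if'' clause is also fine.

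The gap is in your argument that $\ell_d$ is non-identically zero. The implication ``$g^{\pm\infty}\notin\bigcup_{H\in\calH}\Lambda H$ $\Rightarrow$ the quasi-axis of $g$ meets every coset $xH$ in a uniformly bounded set'' is false. Take $\G=F_3=\langle a,b,c\rangle$, $\calH=\{H\}$ with $H=\langle a,b\rangle$ (a quasi-convex free factor of infinite index), and $g=c(ab)c^{-1}$. Then $g^{+\infty}=c\,abab\cdots$ and $g^{-\infty}=c\,b^{-1}a^{-1}b^{-1}a^{-1}\cdots$, neither of which lies in $\Lambda H$ (both begin with $c$). Yet the entire axis of $g$ equals $c$ times the axis of $ab$, hence lies in the single coset $cH$; in particular the intersection is unbounded, and indeed $\ell_d[g]=\ell_d[ab]=0$ since $ab\in H$. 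The underlying issue is that avoiding $\Lambda H$ says nothing about avoiding translates $x\Lambda H$, and conjugating into $H$ is exactly what produces an axis inside a coset.

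The paper sidesteps this entirely: rather than exhibiting a specific loxodromic, it observes that when every $H\in\calH$ has infinite index the coned-off graph is unbounded, and then applies Lemma \ref{lem.unbound+trl} directly to conclude $\ell_d\not\equiv 0$. This is both shorter and avoids the limit-set analysis. If you want to salvage your approach, you would need to choose $g$ not conjugate into any $H\in\calH$ and then argue more carefully; but establishing unboundedness is the cleaner route.
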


\begin{proof} 
We apply Proposition \ref{prop.conedoffgen} to $X=\Cay(\Gamma, S)$ and $\bbK=\{xH \colon x \in \Gamma, H \in \mathcal{H}\}$, so that the inclusion $\Cay(\Gamma, S) \rightarrow X_{\bbK}=\Cay(\G,S,\calH)$ maps geodesics in $\Cay(\Gamma, S)$ uniformly close to geodesics in $\Cay(\Gamma, S,\calH)$. Since $\Cay(\G,S,\calH)$ is geodesic by construction, any orbit pseudo metric from the isometric action of $\Gamma$ on $(\Cay(\G,S,\calH),d_{S,\calH})$ will be roughly geodesic, and hence will satisfy properties $i)$ and $ii)$ of Lemma \ref{lem.BCCchar}. By that lemma, to conclude that orbit pseudo metrics induced by $\Cay(\G,S,\calH)$ belong to $\ov\calD_\G$ we are left to show that the action of $\G$ on $\Cay(\G,S,\calH)$ has at least one loxodromic element, which we now explain. By \cite[Cor.~6.15]{abbott.manning}, a non-torsion element $x\in \G$ acts loxodromically on $\Cay(\G,S,\calH)$ if and only if no power of $x$ lies in a conjugate of some subgroup in $\calH$. 
That corollary applies when $\calH$ consists of a single quasi-convex subgroup, but the same argument works in the general case if we replace \cite[Thm.~3.2]{abbott.manning} by the fact that any finite collection of quasi-convex subgroups has \emph{finite height} (see e.g.~\cite[Main Theorem]{GMRS}). Since we are assuming that each member of $\calH$ is infinite index in $\G$, such an element $x$ exists, and an argument for this is as follows. If we fix a visual metric on $\partial \G$ and $\ep>0$ small enough, by \cite[Cor.~2.5]{GMRS} the set $\calH_\ep$ of conjugates of groups in $\calH$ with limit set having diameter at least $\ep$ is finite. Also, the pairs of fixed points of loxodromic elements are dense in $\partial^2\G$, so by our infinite index assumption we can find a loxodromic $x$ with fixed points in $\partial \G$ at distance at least $\ep$ but not contained in any limit set of a group in $\calH_\ep$.  
Finally, it is clear that these pseudo metrics belong to $\calD_\G$ if and only if all the subgroups in $\calH$ are finite.
\end{proof}

A $\CAT(0)$ \emph{cube complex} is a simply connected, metric polyhedral complex in which all polyhedra are unit-length Euclidean cubes, and satisfies Gromov's link condition: the link of each vertex is a flag complex. For references about the geometry of $\CAT(0)$ cube complexes see \cite{bridson.haefliger, sageev}.

We can apply the proposition above to show that cocompact actions on $\CAT(0)$ cube complexes induce pseudo metrics in $\ov\calD_\G$, as long as the hyperplane stabilizers are quasi-convex.
\begin{proposition}\label{prop.exCAT(0)}
Let $(\calX,d_\calX)$ be a $\CAT(0)$ cube complex with combinatorial metric $d_\calX$, and assume $\G$ acts cocompactly on $\calX$ by simplicial isometries. Also, suppose that:
\begin{enumerate}
\item hyperplane stabilizers are quasi-convex; and
\item the action has no global fixed point.
\end{enumerate}
Then the orbit pseudo metrics for the action of $\G$ on $\calX$ belong to $\ov\calD_\G$. In addition, they belong to $\partial_M\calD_\G$ if and only if some vertex stabilizer is infinite.
\end{proposition}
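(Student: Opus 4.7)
The strategy is to reduce to Proposition \ref{prop.exconeoff} via Lemma \ref{lem.boundaryqi}, by constructing a $\G$-equivariant quasi-isometry between an orbit pseudo metric of $\G\curvearrowright (\calX,d_\calX)$ and an orbit pseudo metric on a suitable coned-off Cayley graph. Let $v_1,\dots,v_k\in\calX$ be representatives of the finitely many $\G$-orbits of vertices, set $H_i:=\mathrm{Stab}_\G(v_i)$ and $\calH:=\{H_1,\dots,H_k\}$, and fix a finite symmetric generating set $S$ of $\G$. Each $H_i$ is quasi-convex in $\G$: it is commensurable with the intersection $K_i:=\bigcap_{\mathfrak{h}}\mathrm{Stab}_\G(\mathfrak{h})$ over the finitely many hyperplanes $\mathfrak{h}$ adjacent to $v_i$, because $H_i$ permutes these hyperplanes with kernel $H_i\cap K_i$ and, conversely, $K_i$ permutes the finite set of vertices in the ``corner'' cut out by the adjacent hyperplanes. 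The subgroup $K_i$ is a finite intersection of quasi-convex subgroups of a hyperbolic group, hence quasi-convex, and commensurable subgroups of a hyperbolic group are jointly quasi-convex. Moreover, no $H_i$ has finite index in $\G$, because otherwise the finite set $\G\cdot v_i$ would have a CAT(0) circumcenter in $\calX$ (which is complete since it is finite-dimensional by cocompactness), giving a global $\G$-fixed point and contradicting (2).

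Next, let $d^{v_1}(x,y):=d_\calX(xv_1,yv_1)$ and let $\tilde d$ be an orbit pseudo metric of $\G$ on $\Cay(\G,S,\calH)$. I claim these are $\G$-equivariantly quasi-isometric. The upper bound $d^{v_1}\leq C\,\tilde d+C$ is direct: generators $s\in S$ displace $v_1$ a uniformly bounded amount, while traversing a cone point in $\Cay(\G,S,\calH)$ sends some $x$ to $xh$ with $h\in gH_ig^{-1}$ for some $g\in\G$ and $i$, so that $xv_1$ is displaced by at most $2\,d_\calX(v_1,gv_i)$, a uniformly bounded quantity using cocompactness. For the reverse estimate, cocompactness of the $\G$-action on edges of $\calX^{(1)}$ yields a finite set $T\subset\G$ such that every edge of $\calX^{(1)}$ has the form $\{wuv_i,\,wu'v_j\}$ for some $u,u'\in T$ and $w\in\G$. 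An edge-path of length $n$ in $\calX^{(1)}$ from $xv_1$ to $yv_1$ therefore lifts to a path of length $O(n)$ in $\Cay(\G,S,\calH)$, with bounded-length cone-point detours absorbing the $H_i$-valued ambiguity in choosing a group-element lift of each intermediate vertex.

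With the quasi-isometry in hand, Proposition \ref{prop.exconeoff} gives $\tilde d\in\ov\calD_\G$, and $\tilde d\in\calD_\G$ precisely when every $H_i$ is finite. Since $\calX$ is geodesic and $\G v_1$ is coarsely dense in $\calX$ (as the finitely many vertex orbits are at mutually bounded $d_\calX$-distance, and $\calX$ deformation retracts to its $0$-skeleton with bounded displacement in the combinatorial metric), $d^{v_1}$ is roughly geodesic, so Lemma \ref{lem.boundaryqi} transfers both conclusions to $d^{v_1}$, completing the proof. The main technical obstacle is the reverse estimate in the quasi-isometry: lifting an efficient edge-path in $\calX^{(1)}$, which may traverse several distinct vertex orbits, to an efficient path in $\Cay(\G,S,\calH)$. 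The lift of each intermediate vertex is determined only modulo the relevant vertex stabilizer, and the point of coning off precisely the family $\calH$ is that these ambiguities are absorbed by cone passages of uniformly bounded length; carrying this out rigorously requires careful bookkeeping of the finite set $T$ in relation to $S$ and the subgroups $H_i$.
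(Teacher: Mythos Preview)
Your overall strategy coincides with the paper's: pass to the coned-off Cayley graph over the vertex stabilizers, apply Proposition \ref{prop.exconeoff}, and transfer the conclusion back via Lemma \ref{lem.boundaryqi}. Your infinite-index argument (via circumcenters) and your sketch of the equivariant quasi-isometry are fine; the latter is essentially the content of \cite[Thm.~5.1]{charney.crisp}, which the paper simply cites.

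The genuine gap is in your proof that vertex stabilizers are quasi-convex. You write that $H_i$ is commensurable with the intersection $K_i$ of the stabilizers of ``the finitely many hyperplanes $\mathfrak{h}$ adjacent to $v_i$''. But the hypotheses do not imply that $\calX$ is locally finite: cocompactness only gives finitely many $\G$-orbits of cells, not finitely many edges at a given vertex. Indeed, Corollary \ref{coro.exBStree} applies the proposition to Bass-Serre trees where edge groups may have infinite index in the adjacent vertex groups, so the tree has vertices of infinite valence. In that situation there are infinitely many hyperplanes adjacent to $v_i$, the quotient $H_i/(H_i\cap K_i)$ embeds in an infinite permutation group, and your commensurability claim (hence quasi-convexity of $H_i$) no longer follows. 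The paper handles this by invoking \cite[Thm.~A]{groves.manning}, which establishes quasi-convexity of vertex stabilizers from quasi-convexity of hyperplane stabilizers without any local finiteness assumption; this is not an argument one can replace by the elementary finite-intersection trick you propose.
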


Specializing the proposition above to 1-dimensional $\CAT(0)$ cube complexes, we obtain that Bass-Serre tree actions with quasi-convex edge groups induce pseudo metrics in $\ov\calD_\G$.

\begin{corollary}\label{coro.exBStree}
    Let $T$ be a Bass-Serre tree for a finite graphs of groups decomposition of $\G$. Suppose this action satisfies:
\begin{enumerate}
\item the edge subgroups are quasi-convex in $\G$; and
\item the vertex subgroups are infinite index in $\G$.
\end{enumerate}
Then the orbit pseudo metrics for the action of $\G$ on $T$ belong to $\ov\calD_\G$. In addition, they belong to $\partial_M\calD_\G$ if and only if some vertex stabilizer is infinite.
\end{corollary}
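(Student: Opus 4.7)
The plan is to apply Proposition \ref{prop.exCAT(0)} with $\calX = T$, viewed as a $1$-dimensional $\CAT(0)$ cube complex whose combinatorial metric agrees with the simplicial tree metric $d_T$. Under this identification the hyperplanes of $\calX$ are precisely the midpoints of edges of $T$, and their $\G$-stabilizers coincide with the edge stabilizers of the graph of groups decomposition; similarly the vertex stabilizers of $\calX$ are the vertex subgroups of the decomposition. So it suffices to verify the three hypotheses of Proposition \ref{prop.exCAT(0)}.

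First, cocompactness: since the underlying graph of groups is finite, the quotient $\G\backslash T$ is a finite graph, so $\G$ acts cocompactly on $T$. Second, quasi-convexity of hyperplane stabilizers is immediate from hypothesis (1) together with the identification above. Third, the absence of a global fixed point follows from hypothesis (2): if $\G$ fixed a vertex $v\in T$, then $\G$ would coincide with its stabilizer $\G_v$, contradicting the fact that $\G_v$ has infinite index in $\G$ (and is therefore a proper subgroup).

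With all three hypotheses satisfied, Proposition \ref{prop.exCAT(0)} yields that the orbit pseudo metrics for the action of $\G$ on $T$ belong to $\ov\calD_\G$, and that they belong to $\partial_M\calD_\G$ precisely when some vertex stabilizer of the cube complex $T$ is infinite; by the identification above, this is the same as saying that some vertex subgroup of the graph of groups decomposition is infinite. The main (minor) point to check carefully is the dictionary between the cube-complex vocabulary used in Proposition \ref{prop.exCAT(0)} and the Bass--Serre vocabulary used here, but this is routine since trees are the prototypical $1$-dimensional $\CAT(0)$ cube complexes and their hyperplanes are their edge midpoints.
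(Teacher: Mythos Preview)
Your proposal is correct and follows exactly the paper's approach: the paper states the corollary as an immediate specialization of Proposition \ref{prop.exCAT(0)} to $1$-dimensional $\CAT(0)$ cube complexes, and you have spelled out the routine dictionary (hyperplanes $\leftrightarrow$ edge midpoints, hyperplane stabilizers $\leftrightarrow$ edge groups, cocompactness from finiteness of the underlying graph) that makes this work. The only tiny imprecision is that ``no global fixed point'' in Proposition \ref{prop.exCAT(0)} allows non-vertex fixed points, but since Bass--Serre actions are simplicial without inversions, a global fixed point would force a fixed vertex, so your argument goes through.
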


\begin{proof}[Proof of Proposition \ref{prop.exCAT(0)}]
Let $\calH$ be a complete set of representatives of the conjugacy classes of vertex stabilizers for the action of $\Gamma$ on $\left(\calX, d_{\calX}\right)$. This set is finite, and since hyperplane stabilizers are quasi-convex, by \cite[Thm.~A]{groves.manning} all the subgroups in $\mathcal{H}$ are quasi-convex. Also, since the action of $\Gamma$ on $(\calX, d_\calX)$ is cocompact and has no global fixed point, it has unbounded orbits with respect to the $\CAT(0)$ metric on $\calX$, which is quasi-isometric to $d_\calX$. This implies that the action on $(\calX,d_\calX)$ has unbounded orbits, so all the subgroups in $\mathcal{H}$ are infinite index in $\Gamma$. Therefore, Proposition \ref{prop.exconeoff} applies to $\mathcal{H}$, and hence the orbit pseudo metrics induced by the action of $\Gamma$ on the coned-off Cayley graph $\Cay(\G, S, \calH)$ belong to $\ov\calD_{\G}$.

To conclude the result, by \cite[Thm.~5.1]{charney.crisp}, $\left(\calX, d_{\calX}\right)$ is $\Gamma$-equivariantly quasi-isometric to $\Cay(\G,S,\calH)$, and the first conclusion follows from Lemma \ref{lem.boundaryqi}. 
The second conclusion follows from the cocompactness of the action, since in this case, properness is equivalent to finiteness of all the vertex stabilizers.
\end{proof}

\begin{remark}So far, most of the examples
of pseudo metrics $d$ in $\partial_M\calD_\G$ that we have exhibited satisfy $\ell_d[x] = 0$ for some non-torsion element $x$. By contrast, in \cite{kap.loxo}, Kapovich constructed an example of a hyperbolic graph $(Y, d_Y )$ and an isometric action of the
rank-2 free group $F_2$ on $Y$ satisfying  $\ell_Y[x] \geq 1/7$ for any non-trivial $x\in F_2$. In addition, he proved that orbit pseudo metrics induced by this action are not quasi-isometric to a word metric, but that they satisfy conditions $i)$ and $ii)$ of Lemma \ref{lem.BCCchar}. Consequently the action on $Y$ induces a purely loxodromic boundary metric structure in $\partial_M\scrD_{F_2}$.
\end{remark}


\section{Extension of stable translation length to $\calC urr(\G)$ and a conjecture of Bonahon}\label{sec.extensiontranslength}

In this section, we study the extension of the stable translation length functions to the space of geodesic currents. First, we prove Theorem \ref{prop.contextbound}.

\begin{proof}[Proof of Theorem \ref{prop.contextbound}]
    Given $d_0\in \calD_\G$, by Corollary \ref{coro.dinftisdifference} there exists $d_1\in \calD_\G$ such that 
    $$\ell_d=\Dil(d_0,d_1)\ell_{d_1}-\ell_{d_0}.$$
    If $\G$ is torsion-free, by \cite[Cor.~5.2]{oregon-reyes.ms}, the stable translation lengths $\ell_{d_0}$ and $\ell_{d_1}$ can be extended continuously to $\Curr(\G)$, so the same holds for $\ell_d$. 
    In the general case that $\G$ contains the torsion-free group $\G_0$ as a finite index subgroup, the conclusion follows since every geodesic current on $\G$ is a geodesic current on $\G_0$.
\end{proof}

In 1988, Bonahon conjectured that the only isometric actions of a hyperbolic group $\G$ on real trees whose stable translation length continuously extends to $\Curr(\G)$ are those with virtually cyclic interval/edge stabilizers \cite[p.~164]{bonahon.currentshypgroups}. However, according to Corollary \ref{coro.exBStree} and Proposition \ref{prop.contextbound}, such a continuous extension exists for every Bass-Serre tree action with quasi-convex edge subgroups. As we can produce examples of splittings over quasi-convex, non-virtually cyclic subgroups, we deduce Theorem \ref{Bonahoncounterex} from the introduction which disproves Bonahon's conjecture.

\begin{example}\label{ex.qcsurfacegroupsplitting}
If $M_0$ is any closed, hyperbolic 3-manifold, there exists a finite cover $M$ of $M_0$ and an embedded, incompressible connected, 2-sided closed surface $S\subset M_0$ such that $H=\pi_1(S)$ is quasi-convex in $\G=\pi_1(M)$  (this follows from the virtual Haken theorem \cite{agol}). Cutting $M$ along $S$ gives a splitting of $\G$ over $H$, and the stable translation length of the Bass-Serre tree corresponding to this splitting extends continuously to $\Curr(\G)$ by quasi-convexity of $H$. The action of $\G$ on this tree is not small.
\end{example}

\begin{example}\label{ex.cubulated}
Generalizing the example above, let $\G$ be any hyperbolic group acting properly and cocompactly by simplicial isometries on the $\CAT(0)$ cube complex $\calX$. Suppose also that there exists a non-virtually cyclic hyperplane stabilizer $H<\G$, which is infinite index in $\G$ and stabilizes the hyperplane $\calH$. By Agol's theorem \cite{agol}, there exists a finite index subgroup $\G_0<\G$ such that if $H_0:=H\cap \G_0$, then $\G_0 \bs \calX$ is a non-positively curved cube complex and $H_0\bs \calH$ is an embedded, two-sided hyperplane of $\G_0 \bs \calX$ that does not self-osculates. This implies that $\G_0$ splits over $H_0$, and since $H_0$ is non-virtually cyclic, the action of $\G_0$ on the corresponding Bass-Serre tree gives another counterexample to Bonahon's conjecture.
\end{example}


\section{Some questions}

In this final section we list some open questions related to $\ov\Dc_\G$. Our first two questions were mentioned in the introduction and are concerning which actions give rise to points in $\ov\Dc_\G$.

\begin{question}\label{question.acylindrical}
    If $\G$ is non-elementary hyperbolic, does any cobounded acylindrical action of $\G$ on a geodesic hyperbolic space induce a point in $\ov\scrD_\G$?
\end{question}

\begin{question}\label{qstn.Rtrees}
    Suppose $\G$ is non-elementary hyperbolic and acts minimally on an $\R$-tree with infinite index quasi-convex interval stabilizers. Does this action induce a point in $\ov\scrD_\G$?
\end{question}

\begin{question}
    Given two finite, symmetric, admissible probability measures $\mu,\mu_\ast$ on $\G$ their associated Green metrics $d_{\mu}$ and $d_{\mu_\ast}$ belong to $\calD_\G$ \cite[Cor.~1.2]{bhm}. We can find two points at infinity $\rho_{\infty}$ and $\rho_{-\infty}$ in the Manhattan boundary corresponding to the endpoints of the bi-infinite Manhattan geodesic joining $[d_{\mu}]$ and $[d_{\mu_\ast}]$. Is there any meaning for $\rho_\infty$ and $\rho_{-\infty}$ in terms of $\mu$ and $\mu_\ast$?
\end{question}

The next question follows naturally from the work of Cantrell and Tanaka \cite{cantrell.tanaka.Man}. 
\begin{question}
Are the Manhattan curves $\theta_{d_\ast/d}$ for $d,d_\ast \in \Dc_\G$ real analytic?
\end{question}

We can also ask for generalizations of $\ov\calD_\G$ and $\ov\scrD_\G$ to groups that are not necessarily hyperbolic.

\begin{question}
Is there a nice theory of metric structures for relatively hyperbolic groups? In this case, we could consider cusp uniform actions on roughly geodesic hyperbolic spaces.  
\end{question}

For an arbitrary finitely generated group $\G$ we can consider the space $\calD_\G$ of left-invariant, roughly geodesic pseudo metrics on $\G$ that are quasi-isometric to a word metric for a finite generating set. This is consistent with our definition of $\calD_\G$ when $\G$ is hyperbolic. In this more general setting, the expression $\Del$ in formula \eqref{eq.defDel} only defines a pseudo metric on the quotient $\scrD_\G$ of $\calD_\G$ under rough similarity. 

\begin{question}
    Is there a non-virtually cyclic group $\G$ for which the pseudo metric space $(\scrD_\G,\Del)$ has zero diameter? When non-zero, can it have finite diameter?  
\end{question}

\begin{question}
    Let $\G$ be such that $(\scrD_\G,\Del)$ is a metric space. Is this space connected? Is it contractible? 
\end{question}


\subsection*{Open access statement}
For the purpose of open access, the author has applied a Creative Commons Attribution (CC BY) licence to any Author Accepted Manuscript version arising from this submission.


\noindent\small{Department of Mathematics, 
University of Warwick,
Coventry, CV4 7AL, UK}\\
\small{\textit{Email address}: \texttt{stephen.cantrell@warwick.ac.uk}\\
\\
\small{Max Planck Institute for Mathematics, Bonn, Germany, 53111}\\
\small{\textit{Email address}: \texttt{eoregon@mpim-bonn.mpg.de}}\\


\begin{thebibliography}{100}


\bibitem{abbott.manning}
C.~Abbott, J.~F.~Manning, Acylindrically hyperbolic groups and their quasi-isometrically embedded subgroups. \textit{Michigan Math. J.} (to appear). \url{https://arxiv.org/abs/2105.02333}.



\bibitem{agol}
I.~Agol, The virtual Haken conjecture. With an appendix by I.~Agol, D.~Groves and J.~F.~Manning. \textit{Doc. Math.} \textbf{18} (2013), 1045--1087.

\bibitem{AL}
G.~N.~Arzhantseva, I.~G.~Lysenok, A lower bound on the growth of word hyperbolic groups. \textit{J. Lond. Math. Soc.} \textbf{73} (2006), no. 1, 109--125.



\bibitem{bestvina.feighn.outer}
M.~Bestvina, M.~Feighn, Outer limits. \url{https://www.math.utah.edu/~bestvina/eprints/bestvina.feighn..outer_limits.pdf}, preprint, 1993.

\bibitem{bestvina.feighn.stable}
M.~Bestvina, M.~Feighn, Stable actions of groups on real trees. \textit{Invent.
Math.} \textbf{121} (1995), 287--321.



\bibitem{bhm}
S.~Blach\'ere, P.~Haissinsky and P.~Mathieu, Harmonic measures versus quasiconformal measures for hyperbolic groups. \textit{Ann. Sci. \'Ec. Norm. Sup\'er.} (4) \textbf{44} (2011), no. 4, 683--721. (hal-00290127v2)

\bibitem{bps} 
J.~Bochi, R.~Potrie and A. Sambarino, Anosov representations and dominated splittings. \textit{J. Eur. Math. Soc.} \textbf{21} (2019), 3343--3414.



\bibitem{bonahon.currentshypgroups}
F.~Bonahon, Geodesic currents on negatively curved groups. \textit{Arboreal group theory} (Berkeley, CA, 1988), 143–168, Math. Sci. Res. Inst. Publ., \textbf{19}, Springer, New York, 1991.

\bibitem{bonahon.currentsTeich}
F.~Bonahon, The geometry of Teichmüller space via geodesic currents. \textit{Invent. Math.} \textbf{92} (1988), 139--162.



\bibitem{BCLS} 
M.~Bridgeman,~R.Canary, F.~Labourie and A.~Sambarino. The pressure metric for Anosov representations. \textit{Geo. Funct. Anal.} \textbf{25} (2015), 1089--1179.

\bibitem{bridson.haefliger}
M.~Bridson, A.~Haefliger, Metric spaces of non-positive curvature. Grundlehren der Mathematischen Wissenschaften, \textbf{319}, Springer-Verlag, Berlin, 1999.

\bibitem{burger} 
M.~Burger, Intersection, the Manhattan curve, and Patterson-Sullivan theory in rank 2. \textit{Inter. Math. Res. Not.} \textbf{7} (1993), 217--225.

\bibitem{burger-iozzi-parreau-pozzetti}
M.~Burger, A.~Iozzi, A.~Parreau and M.~B.~Pozzetti, Currents, systoles, and compactifications of character varieties. \textit{Proc. Lon. Math. Soc.} \textbf{123} (2021), 565--596.



\bibitem{cantrell.tanaka.2}
S.~Cantrell, R.~Tanaka. Invariant measures of the topological flow and measures at infinity on hyperbolic groups.
\url{https://arxiv.org/abs/2206.02282}, arXiv preprint, 2022.

\bibitem{cantrell.tanaka.Man}
S.~Cantrell, R.~Tanaka. The Manhattan curve, ergodic theory of topological flows and rigidity.
\url{https://arxiv.org/abs/2104.13451}, arXiv preprint, 2021.

\bibitem{charney.crisp}
R.~Charney, J.~Crisp, Relative hyperbolicity and Artin groups. \textit{Geom. Dedicata} \textbf{129} (2007), 1--13.



\bibitem{DGLM}
T.~Delzant, O.~Guichard, F.~Labourie, S.~Mozes. Displacing representations and orbit maps. In: Farb, B., Fisher, D., Zimmer, R.J. (eds.) Geometry, Rigidity, and Group Actions, pp. 494–514. University of Chicago Press, Chicago, 2011.



\bibitem{franc.mart}
S.~Francaviglia, A.~Martino, Metric properties of outer space. \textit{Publ. Mat.} \textbf{55} (2011), no. 2, 433--473.



\bibitem{furman}
A.~Furman, Coarse-geometric perspective on negatively curved manifolds and groups. Rigidity in dynamics and geometry (Cambridge, 2000), pp. 149--166. Springer, Berlin, 2002.

\bibitem{gaboriau.jaeger.levitt.lustig}
D.~Gaboriau, A.~Jaeger, G.~Levitt, M.~Lustig, An index for counting fixed points of automorphisms of free groups. \textit{Duke Math. J.} \textbf{93} (1998), no. 3, 425--452.

\bibitem{ghys.delaharpe}
\'E.~Ghys, P.~de~la~Harpe, Sur les groupes hyperboliques d'apr\`es Mikhael Gromov. Progress in Mathematics, \textbf{83}. Birkhäuser Boston, Inc., Boston, MA, 1990.

\bibitem{GMRS}
R.~Gitik, M.~Mitra, E.~Rips, M.~Sageev, Widths of subgroups. \textit{Trans. Amer. Math. Soc.} \textbf{350} (1998), 321--329.



\bibitem{gmm.entropy}
S.~Gouëzel, F.~Math\'eus, F.~Maucourant, Entropy and drift in word hyperbolic groups. \textit{Invent. Math.} \textbf{211} (2018), 1201--1255.



\bibitem{groves.manning}
D.~Groves, J.~F.~Manning, Hyperbolic groups acting improperly. \textit{Geom. Top.} \textbf{27} (2023), 3387--3460.

\bibitem{gw}
O.~Guichard, A.~Wienhard, Anosov representations: Domains of discontinuity and applications, \textit{Invent. Math.} \textbf{190} (2012), 357--438.

\bibitem{guirardel}
V.~Guirardel, Approximations of stable actions on $\R$-trees. \textit{Comment. Math. Helv.} \textbf{73} (1998), no. 1, 89--121.



\bibitem{kap.loxo}
I.~Kapovich, On purely loxodromic actions. \textit{Monatsh. Math.} \textbf{181} (2016), no. 1, 89--101.

\bibitem{kap.rafi}
I.~Kapovich, K.~Rafi, On hyperbolicity of free splitting and free factor complexes. \textit{Groups Geom. Dyn.} \textbf{8} (2014), no. 2, 391--414.

\bibitem{kap.margra}
M.~Kapovich, D.~Martinez-Granado, Bounded backtracking property and geodesic currents. In preparation.




\bibitem{kucherenko-quas}
T.~Kucherenko, A.~Quas. Asymptotic behavior of the pressure function for H\"older potentials.
\url{https://arxiv.org/abs/2302.14839}, arXiv preprint, 2023.

\bibitem{labourie} 
F.~Labourie, Anosov flows, surface groups and curves in projective space. \textit{Invent. Math.} \textbf{165} (2006), 51--114.



\bibitem{margra.dero}
D.~Martinez-Granado, L.~De Rosa, Dual spaces of geodesic currents. 
\url{https://arxiv.org/abs/2211.05164}, arXiv preprint, 2022.



\bibitem{oregon-reyes.ineq}
E.~Oreg\'on-Reyes, Properties of sets of isometries of Gromov hyperbolic spaces. \textit{Groups Geom. Dyn.} \textbf{12} (2018), no. 3, 889--910.

\bibitem{oregon-reyes.ms}
E.~Oreg\'on-Reyes, The space of metric structures on hyperbolic groups. \textit{J. Lon. Math. Soc.}   \textbf{107} (2023), no. 3, 914--942.



\bibitem{otal}
J.-P.~Otal, Le spectre marqu\'e des longueurs des surfaces \`a courbure n\'egative. (French) \textit{Ann. of Math.} (2) \textbf{131} (1990), no. 1, 151--162.



\bibitem{paulin.trees}
F.~Paulin, Outer automorphisms of hyperbolic groups and small actions on \textbf{R}-trees. \textit{Arboreal group theory} (Berkeley, CA, 1988), 331--343, Math. Sci. Res. Inst. Publ., \textbf{19}, Springer, New York, 1991.



\bibitem{sageev}
M.~Sageev, CAT(0) cube complexes and groups. In Geometric group theory, volume 21 of \textit{IAS/Park City Math. Ser.}, pages 7--54. Amer. Math. Soc., Providence, RI, 2014.



\bibitem{skora}
R.~Skora, Splittings of surfaces. \textit{J. Amer. Math. Soc.} \textbf{9} (1996), 605--616.



\bibitem{tsouvalas}
K.~Tsouvalas. Anosov representations, strongly convex cocompact groups and weak eigenvalue gaps. 
\url{https://arxiv.org/abs/2008.04462}, arXiv preprint, 2020.






\end{thebibliography}
\end{document}